   \newtheorem{thm}{Theorem}[section]
  \newtheorem{prop}[thm]{Proposition}
  \newtheorem{cor}[thm]{Corollary}
  \theoremstyle{definition}
 \theoremstyle{remark}
 \newtheorem{rmk}[thm]{Remark}
  \newtheorem{fact}[thm]{Fact}
 \newtheorem{examp}[thm]{Example}
 \newenvironment{psmallmatrix}
   {\left(\begin{smallmatrix}}
 {\end{smallmatrix}\right)}
 \newcommand{\g}{\mathfrak{g}}
 \newcommand{\h}{\mathfrak{h}}
 \renewcommand{\k}{{\mathfrak k}}
 \renewcommand{\l}{\mathfrak l}
 \newcommand{\p}{\mathfrak p}
 \newcommand{\q}{\mathfrak q}
 \newcommand{\z}{\mathfrak z}
 \renewcommand{\t}{{\mathfrak t}}
 \renewcommand{\u}{{\mathfrak u}}
 \newcommand{\C}{{\mathbb C}}
 \numberwithin{equation}{section}
 \DeclareFontFamily{U}{mathx}{\hyphenchar\font45}
 \DeclareFontShape{U}{mathx}{m}{n}{
       <5> <6> <7> <8> <9> <10>
       <10.95> <12> <14.4> <17.28> <20.74> <24.88>
       mathx10
       }{}
 \DeclareSymbolFont{mathx}{U}{mathx}{m}{n}
 \DeclareMathAccent{\widecheck}{0}{mathx}{"71}
 \DeclareMathAccent{\wideparen}{0}{mathx}{"75}
\begin{document}
 \title{Branching laws and a duality principle, Part I}
\author{  Bent {\O}rsted,   Jorge A.  Vargas}
 \thanks{Partially supported by Aarhus University  (Denmark),  CONICET
 (Argentina)  }
 \date{\today }
 \keywords{Intertwining operators, Admissible restriction, Branching laws, Reproducing kernel, Discrete Series}
 \subjclass[2010]{Primary 22E46; Secondary 17B10}
 \address{ Mathematics Department, Aarhus University, Denmark; FAMAF-CIEM, Ciudad Universitaria, 5000 C\'ordoba, Argentine}
 \email{orsted@math.au.dk,   vargas@famaf.unc.edu.ar}

 \begin{abstract}
  For a semisimple Lie group $G$ satisfying the
 equal rank condition, the most basic family of unitary
 irreducible representations is the Discrete Series found
 by Harish-Chandra. In this paper,  we continue our study of the branching laws for Discrete Series when restricted to a subgroup $H$ of the same
type by use of integral and differential operators in combination with our previous duality principle.  Many results are presented in generality, others are shown in detail for Holomorphic Discrete Series.
 \end{abstract}
 \maketitle
 \markboth{{\O}rsted- Vargas}{Duality, branching}
{\centering\footnotesize  We are delighted to make this paper part of a tribute to Karl-Hermann Neeb,\\ for his
indefatigable dedication to representation theory\\ and Lie theory.\par}
 \tableofcontents
 \section{Introduction}

For a unitary representation $\pi$ of a Lie group $G$ in a Hilbert space $V$ we have
for each one-parameter subgroup $exp(tX)$ a unitary one-parameter group of
operators $U(t) = \pi(exp(tX))$, and this is given by the Fourier transform of a spectral
measure  on the real line with values in the projections in $V$. It is a fundamental question
to know this spectral measure and in particular the spectrum, i.e. its support. When the
generator of $U(t)$ corresponds to a physical interpretation such as energy, then one
thinks of the notion of positive energy as  related to the spectrum being bounded below.
This notion has been studied in great detail by Karl-Hermann Neeb in connection with
unitary  highest weight representations, where also the connection to causality and
field theory is central. In the works by Harish-Chandra this class of representations
was introduced as holomorphic Discrete Series of a semi-simple Lie group $G$ and related
to analysis and holomorphic vector bundles on the Riemannian Symmetric space $G/K$. Now in general for a unitary
representation of a Lie group $G$ it is of interest to find the spectrum of its restriction
to a subgroup $H$ (just as for one-parameter subgroups) - the branching law - and this will reveal much about the nature of the
representation. One important aspect is that one needs good models of  the Hilbert space
in order to carry out the restriction explicitly, and again typical models are in homogeneous
vector bundles over $G/K$ in the semi-simple situation, or equivalent versions (via
parallelization of the bundles) in vector-valued function on $G/K$. Whereas the overall principles
are simple, the computations involve the structures of $G$ and $H$ in suitable coordinates
using the root systems. The coordinates are relevant since we want to represent the
branching laws using integral kernel operators and also differential operators.

  In this paper we continue our study of the Discrete Series of a connected linear semisimple Lie group $G$,
 namely the unitary irreducible representations $\pi$ arising as closed subspaces of the left regular representation
 in $L^2(G)$. Here $G$ and a maximal compact subgroup $K$ have the same rank and Harish-Chandra gave
 a parametrization of such $\pi$, the so-called Discrete Series of $G$.

 Our aim is to understand the restriction
 of the Discrete Series of $G$ to a symmetric subgroup $H$ of $G$ in the admissible case, namely when
 a $\pi$ restricted to $H$ is a direct sum  of irreducible subspaces, each of them Discrete Series $\rho$ of $H$ with
 finite multiplicities. There are a number of techniques that we use here, firstly the theory of reproducing kernels,
 corresponding to convenient models of the Discrete Series. There are two kinds of operators we analyze, namely
 \begin{itemize}
 \item symmetry breaking operators, i.e. $H$ equivariant linear maps from $\pi$ to the individual $\rho$
 \item holographic operators, i.e. $H$-equivariant linear maps from the individual $\rho$ to $\pi$
 \end{itemize}
 As it turns out differential operators play a key role as symmetry breaking operators, and we shall explain
 this using homogeneous vector bundles over $G/K$. Another technique is that of pseudo-dual pair, a notion  introduced in \cite{OS}, and
 useful in our situation is that  we associate to $H$ another symmetric subgroup $H_0$ of $G$, and there
 is a corresponding duality theorem, which in a sense reduces the branching law (the explicit decomposition
 of $\pi$) to a branching law for $H_0$ under its maximal compact subgroup.

 We treat in special detail the case of $G/K$ of Hermitian type and the holomorphic Discrete Series; here we give
 a simpler proof of the duality theorem and also more information about the differential operators and
 kernel operators arising as holographic operators. This case has been treated earlier using the models
 specific to this case, namely seeing $G/K$ as a bounded symmetric domain; we are particularly
 interested in the nature of the symmetry breaking differential operators, in particular whether they
 are purely tangential or contain normal derivatives.

 In order to describe the  main results we set up some notation.

  From now on,  $G$ is a connected semisimple matrix Lie group. $K$ is a maximal compact subgroup of $G$. $\theta$ denotes the Cartan involution associated to $K$. $Lie(G)=\g=\k+\p$ is the associated Cartan decomposition. $\sigma$ a involution in $G$ that commutes with the Cartan involution $\theta$. $H:=(G^\sigma)_0$ the connected component of the identity of $G^\sigma$. The associated subgroup to $H$ is $H_0:=(G^{\sigma \theta})_0$. $L:=K\cap H$ is a maximal compact subgroup of both $H,H_0$.  $(\tau, W)$ (resp. $(\sigma, Z)$) is an irreducible representation of $K$ (resp. $L$). $dg$ denotes a Haar measures  in $G$. We recall the space \begin{multline*} L^2(G\times_\tau W)=\{ f :G\rightarrow W: f(xk)=\tau (k^{-1})f(x),\\ k\in K, x\in G,  \int_G \Vert f(x) \Vert_W^2 dx <\infty \}. \end{multline*}   The left action on functions is denoted by $L_\cdot^G, L_\cdot$. Similarly, we define $L^2(H\times_\sigma Z)$. An irreducible unitary representation of $G$ is square integrable, equivalently a Discrete Series representation, if each of its matrix coefficient is square integrable with respect to Haar measure. It is a Theorem that square integrable representations are determined by their lowest $K$-type, in the sense of Vogan,  and that the lowest $K$-type has multiplicity one when we restrict a given square integrable representation to the maximal compact subgroup $K$. Thus, Frobenius reciprocity let as conclude that a Discrete Series representation of lowest $K$-type $(\tau, W)$ has multiplicity one in $L^2(G\times_\tau W)$. Let   $H^2(G,\tau)\subset  L^2(G\times_\tau W)$ denote the unique closed linear subspace that affords the square integrable representation of lowest $K$-type $(\tau, W)$. It can be shown that $H^2(G,\tau)$ is a  eigenspace of the Casimir operator $\Omega_\g$. We denote by $\lambda_\tau$ the value of such eigenvalue.    Hence, a key property of $H^2(G,\tau)$ is being a reproducing kernel subspace. We denote the corresponding reproducing kernel by $K_\tau : G\times G \rightarrow Hom_\C(W,W)$. We have $K_\tau(x,y)=K_\tau(y,x)^\star$, $K_\tau(x,\cdot)^\star \in H^2(G,\tau)$, the orthogonal projector onto  $H^2(G,\tau)$ is the integral operator $f\mapsto \int_G K_\tau(y,\cdot) f(y)dy$, and   for every $f\in H^2(G,\tau), w \in W$ the identity $(f(x),w)_W=\int_G(f(y),K_\tau(y,x)^\star w)_W dy $ holds. Also,  $ \forall w \in W,  K_\tau(e,\cdot)^\star w \in H^2(G,\tau)[W] $.   Similarly, for a irreducible representation $(\sigma, Z)$ of $L$,  we consider   $H^2(H,\sigma)\subset L^2(H\times_\sigma Z)$.

 To continue, we describe the main results and  a resume  of the paper. Before we proceed, we would like to point out that the style of this note is inspired by K.-H. Neeb in the sense of aiming first a presentation with a high degree of generality followed by particular (and interesting) more special cases.    In Proposition~\ref{prop:propertiesksc}, Proposition~\ref{prop:propertiesholo}, we  recall scattered results on basic properties of intertwining linear operators between two representations modeled on reproducing kernel spaces. In  \ref{sub:gener1},\ref{sub:gener2}, we recall on the general notion of differential operator in our context.   With respect to
    the main results   a description is  as follows: In Theorem~\ref{prop:Sisnicediff} we generalize a result of Helgason, we show whenever a symmetry breaking operator is equal to the restriction of a plain differential operator, we may replace a differential operator by a canonical differential operator between the  vector bundles  that describe the representations. In the same section, we carry out in  thorough detail   the problem of writing a symmetry breaking operator as a differential operator given  the kernel and viceversa.   In Theorem~\ref{thm:xx} we continue the study of  the relation between symmetric breaking operators for different realizations of the same representation, among them, maximal globalization, Hilbert space realization, Harish-Chandra module realization.  In section 3 we consider the problem of representing a symmetry breaking operator as generalized gradient operator, the obtained result is valid for an arbitrary symmetric pair $(G,H)$ and arbitrary symmetry breaking operator for  an arbitrary  $H$-admissible Discrete Series representation for $G$.  To follow, in section 4, we present a new proof of the duality Theorem for the holomorphic setting,  Theorem~\ref{prop:dualfirstv}, Theorem~\ref{prop:firstversion},  Theorem~\ref{prop:secondver}, Theorem~\ref{prop:direciso}.  Our proof is of algebraic nature. In order to carry out the proof, we introduce the subspace $\mathcal L_{W,H}^c$   to be the linear span of the totality of  subspaces of the representation that affords the lowest $L$-type of each irreducible $H$-subrepresentation, and, $\mathcal U(\h_0)W  :=$the $\mathcal U(\h_0)$ submodule spanned by the subspace  that realizes the lowest $K$-type  of the representation. Then,  in Proposition~\ref{prop:D},  we explicit find an isomorphism $\mathcal L_{W,H}^c \stackrel{D}\equiv \mathcal U(\h_0)W$. In Proposition~\ref{prop:qisd} we show that $D$ may be replaced by the orthogonal projector onto $\mathcal U(\h_0)W$. In Theorem~\ref{prop:kernforhol} we present a new formula for holographic operators based on the reproducing kernel for the initial space. As a consequence, we show   a "separation of variable" formula for the kernel of holographic   as well for symmetry breaking operators, we believe this formula is valid under more general hypothesis, see Corollary~\ref{cor:symmsplit} and the previous corollaries. In section 5 we present an overview of some of the results that will appear in Part II. The results are on a quite careful
    analysis of  when all "first order" symmetry  breaking operators are represented by normal derivative differential operators, later on, we study, when the totality of symmetry breaking operators are represented by normal derivative operators. That is, following \cite[Proposition 6.3]{OV3},  we analyze the equality $\mathcal L_{W,H}^c =\mathcal U(\h_0)W$. Aspects of the notion of duality via dual pairs (also called pseudo dual pairs) has been considered in earlier papers by several authors, notably Jakobsen-Vergne, Kobayashi-Pevzner, Speh and Nakahama
 ; we believe it is useful for branching theory in quite general situations.
Some natural open problems arise from our study, for example we mention a few:

(1) the explicit nature of the
branching laws in terms of the reproducing kernels for  the representations;
since these are essentially generalized hypergeometric functions with parameters
coming from the large group (resp. the subgroup), there will be in the admissible case
an explicit identity expressing the kernel for the large group as a sum over kernels
for the subgroup.

 (2) Since other families of unitary representations have aspects in common with
the discrete series (such as reproducing kernels given in terms of certain matrix coefficients)
we may have similar results for these.

 (3) For the non-admissible cases of branching laws
(with both a discrete and a continuous spectrum) it is still interesting to consider the
symmetry-breaking operators for the discrete spectrum - here we have some conjectures, but
the theory remains incomplete.

 \subsection{Notation}\label{sub:not}  For unexplained concepts please consult  the partial list   below as well as  the Section Partial list of symbols and definitions, or \cite{OV2}\cite{OV3}.  The complexification of a real vector space $V$ is denoted $V_\C$. Quite often we are somewhat sloppy in writing the complex subindex $\C$. A {\it symmetry breaking operator}, that is, a continuous $H$-intertwining linear map from a representation of $G$ into  a representation of $H$ is denoted by $S$, whereas, a {\it holographic operator}, that is, a continuous $H$-intertwining linear map from the representation of $H$ into  a representation of $G$ is denoted by $T$. For a module $M$ and a simple submodule $N$, $M[N]$ denotes the {\it isotypic component} of $N$ in $M$. That is, $M[N]$ is the sum of all irreducible submodules isomorphic to $N.$ If topology is involved, we define $M[N]$ to be the closure of $M[N].$ $Hom_H(V, U)$ denotes the linear space of continuous $H$-intertwining linear maps from $V$ into $U$. For a finite dimensional representation $Z$   and a representation $U$ of a compact Lie group  $L$, $Hom_L(Z,U)$ is equal to the set of linear maps from $Z$ into $U$ that intertwines the action of $L$.

 \section{Analysis of  symmetry breaking (holographic) operators in the symmetric space model.}\label{sec:sym(holo)}
 \subsection{Symmetric space model for Discrete Series} We recall the notation $G,K,H,L$, $(\tau, W)$, $(\sigma,Z)$, $ H^2(G,\tau), H^2(H,\sigma)$. Our hypothesis for subsections 2.1, 2.2, 2.3   is: $(G,H)$ is a {\it arbitrary reductive pair}. From section 2.4 on, we further assume $(G,H)$ is a symmetric pair.  Let $c_\tau :G\times G/K \rightarrow GL(W),  c_\sigma :H\times H/L \rightarrow GL(Z) $ be cocycles. Thus, $c(ab,x)=c(a,bx)c(b,x)$ and $c_\tau(k,eK)=\tau(k), c_\sigma(l,eL)=\sigma(l), k\in K,l \in L$.  Let $dm_{G/K}$ denote a $G$-invariant measure on $G/K$ adjusted so that \begin{equation*}\int_G f(x)dx=\int_{G/K} \int_K   f(xk) dk dm_{G/K}(xK).\end{equation*}We recall the space (for a reference see  \cite{Hi} \cite{Neeb},   \cite{OV2}) \begin{multline}   L_{c}^2(G/K,W):$ $=L_{c_\tau}^2(G/K,W) :=    \{ f:G/K\rightarrow W : \\  \Vert f\Vert_{L_{c}^2(G/K,W)}^2 := \phantom{xxxxxxxxxxxxxxxxxxxxxxxxxxxxxxxxxx} \\ \phantom{xxxxxx}     \int_{G/K}  (\big(c_\tau(x,eL)c_\tau(x,eL)^\star \big)^{-1}  f(x), f(x))_W dm_{G/K}(x) <\infty \}.\end{multline} and the unitary  representation $\pi_{c_\tau}$ of $G$ on $L_{c_\tau}^2(G/K,W) $ defined by means of the equality $\pi_{c_\tau}(x)(f)(yK)=c_\tau(x^{-1},yK)^{-1}f(x^{-1}yK)$. We also obtain $L_{c_\sigma}^2(H/L,Z), \pi_{c_\sigma} $ etc. \\ Roughly speaking, the space $L_{c_\tau}^2(G/K,W)$ may be thought as the space of square integrable functions with respect to the "measure" $\mu_{c_\tau}:=  (c_\tau(x,e)c_\tau(x,e)^\star)^{-1} dm_{G/K}(x)$.  Henceforth,   $H_{c_\tau}^2(G/K,W)$ denotes the $\lambda_\tau$-eigenspace of $\pi_{c_\tau}(\Omega_\g)$ in $L_{c_\tau}^2(G/K,W)$.  The map    $f \mapsto E_{c_\tau}(f)(\cdot):=c_\tau(\cdot,e)f(\cdot)$   is a unitary equivalence between \\ $(L_\cdot,L^2(G \times_\tau W))$ and $(\pi_{c_\tau}, L_{c_\tau}^2(G/K,W))$,\\ carrying $ H^2(G,\tau) $ onto $  H_{c_\tau}^2(G/K,W) $.  Thus, $H_{c_\tau}^2(G/K,W)$ is a reproducing kernel space. We denote the corresponding reproducing kernel by \begin{equation} \label{eq:ktaucktau} K_\tau^c(xK,yK)=c_\tau(y,eK)K_\tau (x,y)c_\tau(x,eK)^{\star}\end{equation} We have, \begin{equation*} K_\tau^c(hy,hx)=c_\tau (h,x)K_\tau^c (y,x) c_\tau (h,y)^\star, \, \forall h \in G, x, y \in G/K. \end{equation*} Since, $K_\tau (x,x)=d(\pi_{c_\tau})   I_W$,   for every  $ x\in G,\, \text{it\, holds,}$ \\ \phantom{xxxxxxxxxxxxxx} $ K_\tau^c (xK,xK) =d(\pi_{c_\tau}) \, c_\tau (x,e) c_\tau (x,e)^\star . $

 \smallskip

 \subsection{Symmetry breaking operators}
  Next, we fix a continuous  symmetry breaking operator

 \phantom{xxxxxxxxx} $S: H_c^2(G/K,W) \rightarrow  H_c^2(H/L,Z)$.

   Since the target space is a reproducing kernel space,  there exists $K_S^c : G/K \times H/L \rightarrow Hom_\C (W,Z)$ so that for  $z\in Z, h\in H$,  $K_S^c(\cdot,hL)^\star z \in  H_c^2(G/K,W)$ and   for all $ \, F \in H_c^2(G/K,W), h \in H, z\in Z$, it holds \begin{equation}\label{eq:kerofSc} (S(F)(hL),z)_Z=(F,K_S^c(\cdot,hL)^\star z)_{L_{c_\tau}^2(G/K,W)}. \end{equation}  Similarly, for $S^\star$ and each $w\in W$, we have a kernel $K_{S^\star}^c(\cdot,xK)^\star w \in H_c^2(H/L,Z)$ such that  for all $  g_1 \in H_c^2(H/L,Z), x \in G$, the equality   $(S^\star(g_1)(xK),w)_W=(g_1, K_{S^\star}^c(\cdot,xK)^\star w)_{L_{c_\sigma}^2(H/L,Z)}$   holds. To follow, we show a result analogous to some statements in  \cite[Proposition 3.7]{OV2},    and, in the proof of  \cite[Lemma 4.2]{OV2}.
 \begin{prop} \label{prop:propertiesksc} Under the assumptions of the previous paragraph,   the following facts hold for an intertwining continuous linear map  $$S: H_c^2(G/K,W) \rightarrow  H_c^2(H/L,Z).$$
 \phantom{xxx} $ h,y \in H, x \in G,  z\in Z, w\in W, F \in H_c^2(G/K,W), g\in L_c^2(H/L,Z).  $
 \begin{enumerate}
 \item  $ S(F)(hL)$ \\ \phantom{xxx} $=\int_{G/K} K_S^c(xK,hL)\big(c_\tau(x,e)c_\tau(x,e)^\star \big)^{-1}F(xK) dm_{G/K}(xK). $
 \item $ K_S^c(xK,hL)w=K_{S^\star}^c(hL,xK)^\star w=S (K_\tau^c(\cdot,xK)^\star w)(hL).  $
 \item $ K_{S^\star}^c(   xK, eL ) w= S^\star (K_\sigma^c(\cdot,eL)^\star w)(xK).  $
 \item $ K_S^c(hxK,hyL)=c_\sigma (h,yL)K_S^c (xK,yL) c_\tau (h,xK)^\star.$
 \item $ K_{S^\star}^c(  hL, xK) z= K_{S}^c(xK, hL)^\star z=S^\star(K_\sigma^c(\cdot,hL)^\star z)(xK). $
    \item    The\, linear\, map,  \\ \phantom{xxxxx}   $ Z\ni z \mapsto K_S^c(\cdot,eL)^\star z =K_{S^\star}(eL,\cdot)z \in H_c^2(G/K,W),$  \\ is  a \, L-map .    Moreover, for each  $z\in Z , \\ \phantom{xxx} K_S^c(\cdot,eL)^\star z =K_{S^\star}(eL,\cdot)z \in H_{c_\tau}^2(G/K,W)[H_{c_\sigma}^2(H/L,Z)][Z]$.
 \item $ (S (K_\tau^c(\cdot,xK)^\star w)(hL),z)_Z=(w,S^\star (K_\sigma^c(\cdot,hL)^\star  z)(xK))_W   .$
 \item $  \,\,\, ( S^\star (g)(xK), w)_{W}=(g,S (K_\tau^c(\cdot,xK)^\star w)_{L_c^2(H/L,Z)} . $
     \end{enumerate}\end{prop}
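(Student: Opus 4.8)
The plan is to derive all eight assertions from five structural inputs: (a) the defining relation \eqref{eq:kerofSc} for $K_S^c$ (and its $S^\star$-analogue for $K_{S^\star}^c$); (b) the reproducing property $(f(xK),w)_W=(f,K_\tau^c(\cdot,xK)^\star w)_{L_{c_\tau}^2(G/K,W)}$ for $f\in H_{c_\tau}^2(G/K,W)$ and its counterpart on $H/L$; (c) the identity $(S(F),g)_{L_{c_\sigma}^2(H/L,Z)}=(F,S^\star(g))_{L_{c_\tau}^2(G/K,W)}$ expressing that $S^\star$ is the Hilbert-space adjoint of $S$ (with $S^\star$ factoring through the orthogonal projector onto $H_{c_\sigma}^2(H/L,Z)$); (d) the covariance identities for $K_\tau^c$ and $K_\sigma^c$ recorded just before the statement, together with the intertwining relation $S(\pi_{c_\tau}(h)F)=\pi_{c_\sigma}(h)(S(F))$ for $h\in H$; and (e) the cocycle identity, which gives $c(h,h^{-1}x)=c(h^{-1},x)^{-1}$ and $c(e,x)=\mathrm{Id}$.

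I would first prove (1) by unfolding \eqref{eq:kerofSc} and the weighted inner product of $L_{c_\tau}^2(G/K,W)$, moving the positive self-adjoint factor $(c_\tau(x,e)c_\tau(x,e)^\star)^{-1}$ into the other argument and using that $z\in Z$ is arbitrary. For (2) I would put $F=K_\tau^c(\cdot,xK)^\star w$ in \eqref{eq:kerofSc} and collapse both sides of the resulting $L_{c_\tau}^2$-pairing by (b), which yields $S(K_\tau^c(\cdot,xK)^\star w)(hL)=K_S^c(xK,hL)w$; running the same computation with $S^\star$ and $g_1=K_\sigma^c(\cdot,hL)^\star z$ and then comparing the two pairings shows that $S(K_\tau^c(\cdot,xK)^\star w)$ and $K_{S^\star}^c(\cdot,xK)^\star w$ are elements of $H_{c_\sigma}^2(H/L,Z)$ with the same pairing against every element of that space, hence equal, and evaluation at $hL$ completes the chain in (2). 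Then (5) is (2) read for $S^\star$ (using $(S^\star)^\star=S$ and the hermitian symmetry $K_\tau^c(xK,yK)=K_\tau^c(yK,xK)^\star$), (3) is the case $h=e$ of (5), (7) is obtained by sandwiching (2) and (5) (or directly from (c) with $F=K_\tau^c(\cdot,xK)^\star w$, $g=K_\sigma^c(\cdot,hL)^\star z$ and (b) on both sides), and (8) is (c) combined with (b) after replacing the arbitrary $g\in L_{c_\sigma}^2(H/L,Z)$ by its orthogonal projection onto $H_{c_\sigma}^2(H/L,Z)$, which leaves $S^\star(g)$ unchanged.

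For the covariance law (4), starting from (2) I would rewrite $K_\tau^c(\cdot,hx'K)^\star w$, via the covariance of $K_\tau^c$ and the cocycle identity (e), as $\pi_{c_\tau}(h)\big(K_\tau^c(\cdot,x'K)^\star c_\tau(h,x'K)^\star w\big)$, push $\pi_{c_\tau}(h)$ through $S$ by (d), evaluate at $hy'L$ using (e) once more, and read off the coefficient of the arbitrary $w$. Finally (6): the equality $K_S^c(\cdot,eL)^\star z=S^\star(K_\sigma^c(\cdot,eL)^\star z)$ is the case $h=e$ of (5); the map $z\mapsto K_\sigma^c(\cdot,eL)^\star z$ is $L$-equivariant into $H_{c_\sigma}^2(H/L,Z)[Z]$ by the covariance of $K_\sigma^c$ specialized to $h\in L$ together with the fact, recalled before the statement, that $K_\sigma^c(\cdot,eL)^\star z$ lies in the lowest $L$-type; and composing with the $H$- (hence $L$-) equivariant map $S^\star$, while invoking that $H_{c_\sigma}^2(H/L,Z)$ is $H$-irreducible so that $S^\star$ carries it into the isotypic component $H_{c_\tau}^2(G/K,W)[H_{c_\sigma}^2(H/L,Z)]$, yields both the $L$-equivariance and the membership in $H_{c_\tau}^2(G/K,W)[H_{c_\sigma}^2(H/L,Z)][Z]$. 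The only real difficulty is bookkeeping: keeping cocycle factors and adjoints in the correct slots — the weight $(c_\tau c_\tau^\star)^{-1}$ and the relation $K_\tau^c(xK,yK)=K_\tau^c(yK,xK)^\star$ are exactly what make the transpositions of arguments in (2)--(5) consistent — and, in (6), correctly identifying the nested isotypic components and using irreducibility of the target Discrete Series.
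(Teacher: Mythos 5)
Your proposal is correct and follows essentially the same route as the paper: the same unfolding of the weighted inner product for item (1), and for the $L$-map/isotypic assertion the same two ingredients, namely covariance at the base point and the $H$-irreducibility of $H_{c_\sigma}^2(H/L,Z)$ (you let the image of $S^\star$ land in the isotypic component, while the paper dually annihilates its orthocomplement). The only difference is that the paper simply cites Neeb's notes for the remaining items (2)--(5), (7), (8), whereas you derive them from the reproducing and adjoint identities and obtain the covariance law (4) from (2) plus the intertwining relation --- a harmless, more self-contained elaboration of the same argument.
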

  If we replace the   space $H^2(H,\sigma)$ by $L^2(H\times_\sigma Z)$,  the fourth identity might not hold!
 \begin{proof}A justification of all the statements but the fifth,  is in \cite{Neeb}. We carry out the necessary computation to justify  the first and the fifth  assertions.
   \begin{multline*}   (S(F)(hL),z)_Z \\  \phantom{xxxx} =\int_{G/K} ((c_\tau(x,e)c_\tau(x,e)^\star)^{-1}F(xK), K_S^c(xK,hL)^\star z)_W dm(xK) \\ \phantom{xxx}  =\int_{G/K} (K_S^c(xK,hL)(c_\tau(x,e)c_\tau(x,e)^\star)^{-1}F(xK),   z)_Z dm(xK) \\   =(\int_{G/K}  K_S^c(xK,hL)(c_\tau(x,e)c_\tau(x,e)^\star)^{-1}F(xK)  dm(xK) ,   z)_Z. \end{multline*}

  For the fifth statement, we apply the third statement  to $l\in L, x=eL=o=lo$,   we obtain $K_S^c (ly,o)^\star =c_\tau(l,y) K_S^c (y,o)^\star c_\sigma(l,o)^\star .$ Since, $c_\sigma(l,o)^\star=\sigma (l^{-1})$, we have $c_\tau(l,y)^{-1}K_S^c (ly,o)^\star =  K_S^c (y,o)^\star \sigma(l^{-1})  .$ Thus, the map $z \mapsto K_S^c ( \cdot,o)^\star z$ is a $L$-map. \\ Now,    for $G_1 \in (H_{c_\tau}^2(G/K,W)[H_{c_\sigma}^2(H/L,Z)])^\perp $, due that $H_{c_\sigma}^2(H/L,Z)$ is $H$-irreducible, we have $S(G_1)=0$. Hence, $0=(S(G_1)(hL),z)_Z =(G_1,K_S^c(\cdot, hL)^\star z)_{H_c^2(G/K,W)}$
  yields  \\$K_S^c(\cdot, hL)^\star z \in H_{c_\tau}^2(G/K,W)[H_{c_\sigma}^2(H/L,Z)]$,  the previous computation gives $K_S^c(\cdot, hL)^\star z \in  H_{c_\tau}^2(G/K,W)[H_{c_\sigma}^2(H/L,Z)][Z]$ and we have verified $(5)$. $(6),(7)$ readily follows.   \end{proof}

 \medskip

  The   diagram below shows how to transfer intertwining operators $S$ in the functions on  group model of representations   to operators $S^c$ in the symmetric space  model. The corresponding relation between the kernels $K_S$, $K_{S^c}^c$ is \begin{equation} K_{S^c}^c(xK,hL)=c_\sigma (h,e)K_S (x,h) c_\tau (x,e)^\star, \, \forall h \in H, x \in G.\end{equation}    \cite[Proposition 3.7]{OV2} yields $K_{S^c}^c$ is a smooth function.

 \xymatrix{ \phantom{xx} & {H^2(G,\tau)} \ar[r]^{\cong}  \ar[d]_{S}     & {H_{c_\tau}^2(G/K,W) } \ar[d]^{S^c} & f \ar[r]^{E_{c_\tau}} & c_\tau(\cdot,e)f(\cdot)  \\
    \phantom{xx} & {H^2(H,\sigma)}   \ar[r]^{\cong} & {H_{c_\sigma}^2(H/L,Z)} & g \ar[r]^{E_{c_\sigma}} & c_\sigma(\cdot,e)g(\cdot). }

 \begin{rmk}Let $r:H^2(G,\tau) \rightarrow C^\infty(H\times_\tau W)$ denote the restriction map. In \cite{OV} it is shown the image of $r$ is contained in $L^2(H\times_\tau W)$ and $r$ is $(2,2)$   continuous. It readily follows that the corresponding map $r^c :H_{c_\tau}^2 (G/K, W)\rightarrow L_{c_\tau}^2(H/L,W)$ is again the restriction map. We will apply this observation for the restriction $r_0 : H^2(G,\tau) \rightarrow L^2(H_0 \times_\tau W)$.
 \end{rmk}

 {\bf Note:} Under the assumption of $res_H(H^2(G,\tau))$ is $H$-admissible, in \cite{OV2}, we analyze the  orthogonal projectors $P_{\tau, \sigma}, (resp. P_{\tau, \sigma}^c$)   onto  the    isotypic components $H^2(G,\tau)[H^2(H\times_\sigma Z)]$\\ (resp. $H_{c_\tau}^2(G/K,W)[H_{c_\sigma}^2(H/L, Z)]$)  we state without proofs  the following relation among the respective kernels. \begin{gather*} K_{\tau,\sigma}^c(xK,yK)=c_\tau(y,e)K_{\tau,\sigma}(x,y)c_\tau(x,e)^\star. \\
 K_{\tau,\sigma}(x,y)=\Theta_{H^2(H\times_\sigma Z)}(h\mapsto K_\tau (h^{-1}x,y)). \\ K_{\tau,\sigma}^c( x,y)=\Theta_{H^2(H\times_\sigma Z)}(h\mapsto K_{\tau}^c (h^{-1}x,y)). \end{gather*} Here, $\Theta_{H^2(H\times_\sigma Z)}$ is the Harish-Chandra distribution character of $H^2(H\times_\sigma Z)$.
 \subsection{Holographic operators} By definition, holographic operators are the continuous $H$-intertwining linear maps \begin{equation} T : H_{c_\sigma}^2(H/L, Z)\rightarrow H_{c_\tau}^2(G/K,W) . \end{equation}
Holographic operators, have properties quite similar to the symmetry breaking operators, and they satisfy a Proposition quite    similar to Proposition~\ref{prop:propertiesksc}. To follow, we state without proof (the proof is quite similar to the proof of facts valid for symmetry breaking operators)   facts about holographic operators necessary for further developments. A holographic operator $T$ is represented by a smooth kernel $K_T^c :H/L \times G/K \rightarrow Hom_\C(Z,W)$ so that for $z\in Z, h\in H$, $K_T(hL,\cdot )z \in H_{c_\tau}^2(G/K,W)$;   for $w\in W, s\in G$, $K_T( \cdot, sK )^\star w \in H_{c_\sigma}^2(H/L,Z)$  and  for $g_1 \in H_{c_\sigma}^2(H/L,Z), w\in W, x  \in G$, we have   $(T(g_1)(xK),w)_W =(g_1, K_T^c(\cdot, xK)^\star w)_{L_{c_\sigma}^2(H/L,Z)}$. The analogue to Proposition~\ref{prop:propertiesksc} is the following:
 \begin{prop} \label{prop:propertiesholo} Under the assumptions of the previous paragraph,   the following facts hold for an intertwining continuous linear map  $$T:  H_{c_\sigma}^2(H/L,Z) \rightarrow  H_{c_\tau}^2(G/K,W).$$
   Notation: $h,y \in H, x\in G, z\in Z, w\in W, g\in H_{c_\sigma}^2(H/L,Z).$
 \begin{enumerate}
 \item $ T(g)(xK)$ \\ \phantom{xxxxx} $=\int_{H/L} K_T^c(hL,xK)\big(c_\sigma(h,e)c_\sigma(h,e)^\star \big)^{-1}g(hL) dm_{H/L}(hL).$
 \item $  K_T^c(hL,xK)z=K_{T^\star}^c(xK,hL)^\star z=T (K_\sigma^c(\cdot,hL)^\star z)(xK).$
 \item $  K_T^c(hyL,hxK)=c_\tau (h,xK)K_T^c (yL,xK) c_\sigma (h,yL)^\star.$
 \item $   K_{T^\star}^c( xK, hL) w= K_{T}^c(hL, xK)^\star w=T^\star(K_\tau^c(\cdot,xK)^\star w)(hL).$
 \item $   \text{ The\, linear\, map,} \\  \phantom{xxx}   Z\ni z \mapsto K_T^c(eL,\cdot )  z =K_{T^\star}( \cdot, eL)^\star z \in H_c^2(G/K,W),$  is  a \, L-map.    Moreover, for each  $z\in Z , \\ \phantom{} K_T^c( \cdot , eL )  z =K_{T^\star}(\cdot, eL )^\star z \in H_{c_\tau}^2(G/K,W)[H_{c_\sigma}^2(H/L,Z)][Z]$.
 \item $   (T^\star (K_\tau^c(\cdot,xK)^\star w)(hL),z)_Z=(w,T (K_\sigma^c(\cdot,hL)^\star  z)(xK))_W   .$
 \item $ \forall g, \,  ( T (g)(xK), w)_{W}=(g,T^\star (K_\tau^c(\cdot,xK)^\star w)_{L_c^2(H/L,Z)} .$
     \end{enumerate}
 \end{prop}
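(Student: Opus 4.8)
The plan is to prove Proposition~\ref{prop:propertiesholo} as the mirror image of Proposition~\ref{prop:propertiesksc}, exploiting the fact that a holographic operator $T:H_{c_\sigma}^2(H/L,Z)\to H_{c_\tau}^2(G/K,W)$ is, up to swapping source and target, the same kind of object as a symmetry breaking operator. Concretely, $T^\star: H_{c_\tau}^2(G/K,W)\to H_{c_\sigma}^2(H/L,Z)$ is itself a continuous $H$-intertwining map of exactly the type covered by Proposition~\ref{prop:propertiesksc} (with the roles of $(G,\tau,W)$ and $(H,\sigma,Z)$ interchanged), so I would first record the relation $K_T^c(hL,xK)=K_{T^\star}^c(xK,hL)^\star$, which is just the adjointness identity between a reproducing-kernel operator and its adjoint, and then transport each of the eight items of Proposition~\ref{prop:propertiesksc} through this dictionary.

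The key steps, in order, are as follows. \textbf{Step 1.} Establish the integral representation (1): starting from the defining identity $(T(g)(xK),w)_W=(g,K_T^c(\cdot,xK)^\star w)_{L_{c_\sigma}^2(H/L,Z)}$, unfold the inner product on $L_{c_\sigma}^2(H/L,Z)$ using the weight $(c_\sigma(h,e)c_\sigma(h,e)^\star)^{-1}$ and move the adjoint of $K_T^c$ across, exactly as in the displayed computation in the proof of Proposition~\ref{prop:propertiesksc}; this is a routine reflexivity-of-the-inner-product manipulation. \textbf{Step 2.} Prove (2) and the first equality of (4) by evaluating the integral operator in Step 1 against a reproducing kernel $K_\sigma^c(\cdot,hL)^\star z$ and invoking the reproducing property, together with $K_T^c(hL,xK)z=K_{T^\star}^c(xK,hL)^\star z$. \textbf{Step 3.} Prove the covariance formula (3) $K_T^c(hyL,hxK)=c_\tau(h,xK)K_T^c(yL,xK)c_\sigma(h,yL)^\star$ from $H$-equivariance of $T$ and the transformation rules $K_\tau^c(hy,hx)=c_\tau(h,x)K_\tau^c(y,x)c_\tau(h,y)^\star$, $K_\sigma^c(hy,hx)=c_\sigma(h,x)K_\sigma^c(y,x)c_\sigma(h,y)^\star$ recalled in \S\ref{sec:sym(holo)}; the point is that $T$ commutes with $\pi_{c_\sigma}(h)$ on the source and with $\pi_{c_\tau}(h)$ on the target. \textbf{Step 4.} Specialize (3) to $h=l\in L$, $y=eL$, $x=eL$, and use $c_\sigma(l,o)^\star=\sigma(l^{-1})$, $c_\tau(l,o)^\star=\tau(l^{-1})$ to get that $z\mapsto K_T^c(eL,\cdot)z$ is an $L$-map; then, since $H_{c_\sigma}^2(H/L,Z)$ is $H$-irreducible, any $g_1$ orthogonal to the isotypic component $H_{c_\tau}^2(G/K,W)[H_{c_\sigma}^2(H/L,Z)]$ is killed by $T^\star$, which forces $K_T^c(\cdot,eL)z$ into that isotypic component, and the $L$-map property pins it down to the $[Z]$-refinement; this proves (5). \textbf{Step 5.} Derive (6) and (7) formally from (2), (4) and the definition of adjoint, as indicated for (6)--(8) in the symmetry-breaking case.

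The main obstacle is essentially bookkeeping rather than a genuine mathematical difficulty: one must be careful that the asymmetry between $H_{c_\sigma}^2(H/L,Z)$ and $L_{c_\sigma}^2(H/L,Z)$ is handled correctly — the remark following Proposition~\ref{prop:propertiesksc} warns that the covariance identity (here item (3)) \emph{fails} if one replaces the reproducing-kernel subspace by the full $L^2$ space, so the irreducibility (equivalently, the use of the reproducing subspace rather than all of $L^2$) must be invoked at precisely the right point in Step 4. A secondary subtlety is orientation of all the adjoints and inverse cocycle factors; the cleanest way to avoid sign/transpose errors is to fix once and for all the duality $K_T^c(hL,xK)=K_{T^\star}^c(xK,hL)^\star$ and to quote Proposition~\ref{prop:propertiesksc} with the substitution $(G,K,\tau,W)\leftrightarrow(H,L,\sigma,Z)$, $S\rightsquigarrow T^\star$, rather than redoing each computation from scratch. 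With that dictionary in place every item is an immediate translation, and the proof reduces to verifying the dictionary is consistent and that the one place where $H$-irreducibility enters is respected.
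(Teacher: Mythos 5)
Your proposal is correct and takes essentially the same route as the paper: the paper states Proposition~\ref{prop:propertiesholo} without proof, remarking only that the argument is ``quite similar'' to that of Proposition~\ref{prop:propertiesksc}, and your dictionary $S=T^{\star}$ together with the reproducing-property, equivariance, and isotypic-component computations is exactly that transfer. One small correction to your Step 4: specialize item (3) with $h=l\in L$ and the $H/L$-variable fixed at $eL$ but the $G/K$-variable left free (as in the paper's proof of item (5) of Proposition~\ref{prop:propertiesksc}); fixing both variables at the identity cosets would give only a pointwise identity, not the $L$-equivariance of $z\mapsto K_T^c(eL,\cdot)z$.
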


 \medskip

  The   diagram below shows how to transfer intertwining operators $T$   on  group model of representations   to operators $T^c$ in the symmetric space  model. The corresponding relation between the kernels $K_T$, $K_{T^c}^c$ is \begin{equation}\label{eq:ttc} K_{T^c}^c(hL,xK)=c_\tau (x,e)K_T (h,x) c_\sigma (h,e)^\star, \, \forall h \in H, x \in G.\end{equation}    \cite[Proposition 3.7]{OV2} yields $K_{T^c}^c$ is a smooth function.

 \xymatrix{ \phantom{xx} & {H^2(G,\tau)} \ar[r]^{\cong\,\,\,}  & {H_{c_\tau}^2(G/K,W) }   & f \ar[r]^{E_{c_\tau}\,\,\,\,\,\,\,\,} & c_\tau(\cdot,e)f(\cdot)  \\
    \phantom{xx} & {H^2(H,\sigma)} \ar[u]_{T}   \ar[r]^{\cong\,\,\,} & {H_{c_\sigma}^2(H/L,Z)} \ar[u]^{T^c} & g \ar[r]^{E_{c_\sigma}\,\,\,\,\,\,\,\,} & c_\sigma(\cdot,e)g(\cdot). }

 \subsection{Duality Theorem in $G$-bundle space model} \label{sub:DualityinG} In this subsection we assume $(G,H)$ is a  symmetric pair. More precisely,  $G$ is a connected reductive group and $H$ is the identity connected component of the fixed points of an involution $\sigma$ of $G$.
 Let $K:=G^\theta$ a maximal compact subgroup for $G$ so that $\sigma$ commutes with $\theta$. Hence $L:=H\cap K$ is a maximal compact subgroup of $H$.  Let $H_0:=(G^{\sigma \theta})_0$ the associated subgroup to $H$, then $L$ is a maximal compact subgroup of $H_0$. Next, we fix $H^2(G,\tau)$ a Discrete Series representation for $G$ so that is $H$-admissible. Let $\mathcal U(\h_0)W:=\mathcal U(\h_0)\big( H^2(G,\tau)[W]\big)$ denote the subspace of $H^2(G,\tau)$ spanned by the left translates by $\mathcal U(\h_0)$ of the lowest $K$-type $H^2(G,\tau)[W]$ of $H^2(G,\tau)$. Finally, we fix a Discrete Series $H^2(H,\sigma)$ for $H$. Then,  the duality Theorem \cite{OV3} claims that \begin{equation*} Hom_H(H^2(H,\sigma),  H^2(G,\tau)) \equiv Hom_L(Z, \mathcal U(\h_0)W). \end{equation*} To follow, we describe the isomorphism $\equiv$ and some structural facts. Later on, we will present a analogue statement in the realm of symmetric space model realization of Discrete Series representations.
 In Section~\ref{sec:duality} we will present an independent proof of the Duality Theorem under the extra hypothesis: the inclusion $H/L \rightarrow G/K$ is holomorphic and   $H^2(G,\tau)$ is a holomorphic Discrete Series representation.

 The following statements are verified in \cite[Proposition 4.9]{OV3}.

 a) We write $res_L(\tau)$ as a sum of irreducible representations. That is, we write $res_L(\tau)=\sigma_1 \oplus \cdots \oplus \sigma_R$ with $(\sigma_j, W_j)$ an irreducible representation of $L$. Then,   each $\sigma_j$ is the lowest $L$-type of      a  Discrete Series $H^2(H_0,\sigma_j)$  for $H_0$,  and,  $\mathcal U(\h_0)W$ is $(\h_0, L)$-equivalent to the direct  sum over $1\leq j \leq R$ of the underlying Harish-Chandra modules for $H^2(H_0, \sigma_j)$. Formally, \begin{equation*} \mathcal U(\h_0)W \equiv \oplus_{1\leq j\leq R}\, H^2(H_0, \sigma_j)_{L-fin} \end{equation*}

 b) Let $\mathcal L_{W,H}$ the linear span of subspaces of $H^2(G,\tau)$ that realize the lowest $L$-type of each irreducible $H$-factor of $H^2(G,\tau)$. That is, \begin{equation*} \mathcal L_{W,H}=\oplus_{H^2(H,\mu) \in Spec(res_H(H^2(G,\tau))} H^2(G,\tau)[H^2(H,\mu)][Z_\mu] \end{equation*} Here, $(\mu, Z_\mu)$ is the lowest $L$-type of $H^2(H,\mu)$. Then, there exists
\begin{equation} \label{eq:exD} \text{  \, a \,bijective \,linear}\,  L-\text{map}\,\,\,  D \,\, \text{from}\,\,\, \mathcal L_{W,H} \,\text{ onto\,} \,  \mathcal U(\h_0)W .
\end{equation}

 c) Each  $T \in H^2(H,\mu)\rightarrow H^2(G,\tau)$ is represented by smooth kernel $K_T :H\times G \rightarrow Hom_\C(Z_\mu, W)$. Then, for each $z\in Z_\mu$, $K_T(e,\cdot)z \in H^2(G,\tau)[H^2(H,\mu)][Z_\mu]$, whence we obtain a map from $Z_\mu$ into $\mathcal U(\h_0)W$ via $D$.
 This is the map \begin{equation*} Z_\mu \ni z \mapsto D\big(G\ni x \mapsto K_T(e,x)z)\big)(\cdot)\in \mathcal U(\h_0)W. \end{equation*} The resulting map belongs to $Hom_L (Z_\mu, \mathcal U(\h_0)W)$. In \cite{OV3} it is shown that the map $T\mapsto \big(z \mapsto D(K_T(e,\cdot)z)(\cdot)\big )$ is bijective.

 d) For each $j$ we may and will realize $Z_{\mu_j}$ as a linear subspace of $W$. Thus, we may write $W=\oplus_{1\leq j \leq R} Z_{\mu_j}$, and we have the inclusions $H^2(H_0, \sigma_j) \subset L^2( H_0 \times_{\sigma_j} Z_{\mu_j})$ and the equality $L^2(H_0\times_\tau W)= \oplus_{1\leq j \leq R} L^2(H_0 \times_{\sigma_j}) Z_{\mu_j}$. We define the subspace

  \phantom{xxxxxx} $\mathbf{H}^2(H_0,\tau):= \oplus_{1\leq j \leq R} H^2(H_0 ,\sigma_j ) \subset L^2(H_0 \times_\tau W). $

  Now, since $H^2(G,\tau)$ consists of smooth functions, let $r_0 :H^2(G,\tau) \rightarrow  C^\infty (H_0 \times_\tau W)$ denotes the restriction map. It is shown in \cite[Proposition 1]{OV3} that $r_0$ yields a $(\h_0,L)$-equivalence from $\mathcal U(\h_0)W$ onto $\mathbf{H}^2(H_0,\tau)_{L-fin}$.

  e) The previous comments provide another version of the Duality Theorem. This is, $Hom_H(H^2(H,\sigma), H^2(G,\tau))\equiv Hom_L(Z, \mathbf{H}^2(H_0,\tau))$ via the map
  \begin{multline*} Hom_H(H^2(H,\sigma), H^2(G,\tau)) \ni T \\ \mapsto \bigg(Z\ni z \mapsto r_0(D(K_T(e,\cdot)z))(\cdot) \in Hom_L(Z, \mathbf{H}^2(H_0,\tau))\bigg). \end{multline*}

 f) A expression for the inverse to the map defined in e) has been computed in \cite[Section 4]{OV3}. A formula is:
   \begin{equation*}  K_T(h,x)z      =  (D^{-1}[\int_{H_0} K_\tau (h_0,\cdot)      (C(r_0 (D (K_T(e,\cdot)z))(\cdot))(h_0)dh_0 ])(h^{-1}x).
  \end{equation*} Here, $C=(r_0 r_0^\star)^{-1} $ is a bijective endomorphism  of $\mathbf{H}^2(H_0,\tau)$.

  \smallskip
  When, $D$ can be chosen equal to the identity map, the formula reads  \begin{equation*}  K_T(h,x)z      =   \int_{H_0} K_\tau (h_0,h^{-1}x)z)      (C (r_0 ( (K_T(e,\cdot))(\cdot))(h_0))dh_0 .
  \end{equation*}

  \begin{rmk} In the holomorphic setting, in the formulas in e), f) we are able to replace the linear map $r_0D$  by $r_0$. \end{rmk}

 \subsection{Duality Theorem in Symmetric space model} \label{sub:dualinsymm} The statement in this case is:

 $Hom_H(H_{c_\sigma}^2(H/L,Z), H_{c_\tau}^2(G/K, W))\equiv Hom_L(Z, \mathbf{H}_{c_\tau}^2(H_0/L, W))$

   A explicit isomorphism is: \begin{multline*} Hom_H(H_{c_\sigma}^2(H/L, Z), H_{c_\tau}^2(G/K, W)) \ni T^c \\ \mapsto \bigg(Z\ni z \mapsto r_0^c(D^c(K_{T^c}^c(e,\cdot)z))(\cdot) \in Hom_L(Z, \mathbf{H}_{c_\tau}^2(H_0/L,W))\bigg). \end{multline*}
 Here, $res_L(\tau)=\oplus_j \sigma_j$, \,\,\, $\mathbf{H}_{c_\tau}^2(H_0/L,W):=\oplus_j H_{c_\tau}^2(H_0, \sigma_j)$, \\ \phantom{xx} $\mathcal U(\h_0)W:= \mathcal U(\h_0)\big (\mathbf{H}_{c_\tau}^2(H_0/L,W) [res_L(W)]\big) \subset \mathbf H_{c_\tau}^2(H_0/L,W)$, \\ $D$ as in \ref{eq:exD},  $D^c:=  E_{c_\tau}DE_{c_\tau}^{-1} : E_{c_\tau}(\mathcal L_{W,H})=:\mathcal L_{W,H}^c \rightarrow \mathcal U(\h_0)W  $.
 \subsection{Differential operators}
 \subsubsection{  Generalities on differential operators}\label{sub:gener1}

 In \cite[Proposition 6.1]{OV3}, we have shown in the group model $H^2(G,\tau), H^2(H,\sigma)$ the following result: $\mathcal L_{W,H} =\mathcal U(\h_0)W$ if and only if for every $(\sigma, Z)$ every symmetry breaking operator is represented by a normal derivative  differential operator. Actually, if follows from the proof of the previous statement that if $S : H^2(G,\tau) \rightarrow H^2(H,\sigma)$ is continuous $H$-map so that $\forall z \in Z, K_{S^\star}( e, \cdot)
  z=K_S(\cdot,e)^\star z \in \mathcal U(\h_0)W$, then, $S$ is represented by a normal derivative differential operator, and conversely. For $X\in \mathcal U(\g)$, $R_X$ denotes infinitesimal right derivative by $X$. Then,
 our definition of normal derivative differential operator is based on the fact (for a proof \cite[Chap. II]{Varad}, \cite[chap V]{Wal}) A linear map $ D :\Gamma^\infty(G\times_\tau W) \rightarrow  \Gamma^\infty(G\times_\sigma Z)$ is a {\it differential operator} if and only if there exists finitely many smooth functions $c_\alpha :G\rightarrow Hom_\C(W,Z)$ so that $D(f)=\sum c_\alpha R_{X_1^{\alpha_1}\cdots X_n^{\alpha_n}}$ or a similar expression by means of left derivatives. Such a $D$ is invariant by left translations by $G$ if and only if, for every $\alpha$,  $c_\alpha$ is a constant function and $\sum_\alpha c_a X_1^{\alpha_1}\cdots X_n^{\alpha_n} \in (Hom_\C(W,Z)\otimes \mathcal U(\g))^L$.  As in \cite[4.0.1]{OV3}    $S : H^2(G,\tau) \rightarrow H^2(H,\sigma)$ is {\it represented by a  differential operator} (resp. {\it normal derivative differential operator}) if there exist a $G$-invariant differential operator   $D:   \Gamma^\infty(G\times_\tau W) \rightarrow     \Gamma^\infty(G\times_\sigma Z)$ so that $ \forall \,  f \in H^2(G,\tau)$ we have $  S(f)= res (D(f))$ (resp. if $D=\sum_\alpha c_\alpha R_{X_1^{\alpha_1}\cdots X_n^{\alpha_n}}, \text{with}\, X_1^{\alpha_1}\cdots X_n^{\alpha_n} \in \mathcal U(\h_0)$). Here, $res$ is the restriction map $ res: \Gamma^\infty(G\times_\sigma Z) \rightarrow \Gamma^\infty(H\times_\sigma Z)$. According to Theorem~\ref{prop:Sisnicediff}, in order to show that $S$ is the restriction of a differential operator, as in  Fact~\ref{eq:invdifop}, it suffices to show $S$ is the restriction of a global differential operator. We also recall that in \cite[Lemma 4.2]{OV2} it is shown that $S$ is the restriction of a differential operator if and only if $K_S(\cdot,e)^\star z$ is a $K$-finite vector for each $z\in Z$.

 \subsubsection{} \label{sub:gener2} Let $(\tau, W), (\eta,F)$ be two finite dimensional representations of $K$, $c_\tau, c_\eta$ cocycles from $G\times G/K $ into $Gl(W)$, $Gl(F)$. Then, from  $ C^\infty(G)\otimes W$ into $C^\infty(G)\otimes F$ we have two families of operators, namely the one's constructed by means of left infinitesimal translation $L_D, D \in \mathcal U(\g)\otimes Hom_\C(W,F)$, and the one's constructed via  right infinitesimal translation $R_D, D \in \mathcal U(\g)\otimes Hom_\C(W,F)$. We know \cite{Wal}, when $D \in (\mathcal U(\g)\otimes  Hom_\C(W,F))^{(Ad\otimes \tau^\vee \otimes \eta)(K) }$,  the operator $R_D$ transform the subspace $(C^\infty(G)\otimes W)^{(R_\cdot \otimes \tau)(K)}$ into the subspace $(C^\infty(G)\otimes F)^{(R_\cdot \otimes \eta)(K)}$, whence, $R_D$ defines a left invariant differential operator from $\Gamma^\infty(G\times_\tau W)$ into $\Gamma^\infty(G\times_\eta F)$.   In \cite{Wal} it is shown  that every $G$-left invariant differential operator from $\Gamma^\infty(G\times_\tau W)$ into $\Gamma^\infty(G\times_\eta F)$  is  obtained in this way.

 \smallskip

 Kobayashi-Pevzner have generalized the previous result, we present a version according to our future needs. We fix $H\subset G$ connected  reductive subgroups, $H$ closed in $G$. We fix respective maximal compact subgroups $L\subset H, K\subset G$ so that $L=K\cap H$. We fix $(\tau, W)$ (resp. $(\eta, F)$) a finite dimensional  representation of $K$ (resp. of $L$).
0295 Then, we have the natural inclusion $H/L \hookrightarrow G/K$ and the spaces $\Gamma^\infty(G\times_\tau W)\simeq C^\infty (G,W)^{(R\otimes W)(K)}$,  $\Gamma^\infty(H\times_\eta F)\simeq C^\infty (H,F)^{(R\otimes \sigma)(L)}$. In \cite{KP1}, they  have shown the space of $H$-invariant differential operators from $\Gamma^\infty(G\times_\tau W)$ into $\Gamma^\infty(H\times_\eta F)$ is isomorphic to the space $Hom_L(F^{\vee}, \mathcal U(\g) \otimes_{\mathcal U(\k)} W^{\vee} )$. A  isomorphism is achieved via the map $\phi \in Hom_L(F^{\vee}, \mathcal U(\g) \otimes_{\mathcal U(\k)} W^{\vee} )$ is mapped to the differential operator $D_\phi$, defined,  for $f\in C^\infty(G)\otimes W$,  by the equality,  \begin{equation} <D_\phi (f), z^\vee >=\sum_j <R_{u_j} (f), w_j^\vee >\vert_H  \, \text{ for} \,\, z^\vee \in F^\vee, \end{equation}  where $\phi (z^\vee)=\sum_j u_j w_j^\vee \, (u_j \in \mathcal U(\g), \, w_j^\vee \in W^\vee) $.

 \smallskip

 An equivalent expression for the $H$-invariant differential operator is
 \begin{fact}\label{eq:invdifop} \begin{equation*}  D_\phi(f)=\sum_j T_j R_{D_j}(f), \, T_j \in Hom_\C(W,Z), D_j \in \mathcal U(\g) \end{equation*}
  \end{fact}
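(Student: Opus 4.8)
The plan is to unwind the Kobayashi--Pevzner isomorphism explicitly and rewrite the defining formula for $D_\phi$ in a form that manifestly displays $D_\phi$ as a finite sum $\sum_j T_j R_{D_j}$. First I would start from the data: an element $\phi \in Hom_L(F^\vee, \mathcal U(\g)\otimes_{\mathcal U(\k)} W^\vee)$, and for each basis vector $z_k^\vee$ of $F^\vee$ write $\phi(z_k^\vee)=\sum_i u_{k,i}\, w_{k,i}^\vee$ with $u_{k,i}\in\mathcal U(\g)$ and $w_{k,i}^\vee \in W^\vee$, as in the displayed definition of $D_\phi$. The defining equation $\langle D_\phi(f), z_k^\vee\rangle = \sum_i \langle R_{u_{k,i}}(f), w_{k,i}^\vee\rangle\vert_H$ determines $D_\phi(f)\in C^\infty(H)\otimes F$ by pairing against each $z_k^\vee$; expanding $D_\phi(f)=\sum_k (D_\phi(f))_k\, z_k$ in the dual basis, the $k$-th component is exactly $\sum_i \langle R_{u_{k,i}}(f), w_{k,i}^\vee\rangle\vert_H$.

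Next I would reorganize this double sum over the single index set of all pairs $(k,i)$: set $D_{(k,i)}:=u_{k,i}\in\mathcal U(\g)$, and let $T_{(k,i)}\in Hom_\C(W,F)$ be the rank-one map $w\mapsto \langle w, w_{k,i}^\vee\rangle\, z_k$ (i.e. $T_{(k,i)}=z_k\otimes w_{k,i}^\vee$). Then for $f\in C^\infty(G)\otimes W$ one checks componentwise that $T_{(k,i)}\bigl(R_{u_{k,i}}(f)\bigr) = \langle R_{u_{k,i}}(f), w_{k,i}^\vee\rangle\, z_k$, so summing over all $(k,i)$ reproduces $D_\phi(f)\vert_H$ term by term. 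Relabeling the index $(k,i)$ as $j$ gives precisely the asserted expression $D_\phi(f)=\sum_j T_j R_{D_j}(f)$ with $T_j\in Hom_\C(W,F)$ and $D_j\in\mathcal U(\g)$. (The statement writes $Hom_\C(W,Z)$, but in the ambient context of \ref{sub:gener2} the target representation is $(\eta,F)$, so $Z$ there should read $F$; this is a harmless notational slip.)

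The only genuine points requiring a word of care are the finiteness and well-definedness: finiteness is automatic because $F^\vee$ is finite dimensional and each $\phi(z_k^\vee)$ is a single element of $\mathcal U(\g)\otimes_{\mathcal U(\k)} W^\vee$, hence a finite sum; well-definedness across the tensor relation $u\, (Xw^\vee) \sim (uX)\, w^\vee$ for $X\in\mathcal U(\k)$ is exactly what the $L$-equivariance of $\phi$ together with the compatibility of $R_\cdot$ with the bundle structure guarantees, and this is the content of the Kobayashi--Pevzner theorem already quoted, so I may invoke it rather than reprove it. I expect the main (and only) obstacle to be purely bookkeeping: keeping the pairing conventions between $W$, $W^\vee$, $F$, $F^\vee$ consistent so that the rank-one operators $T_j$ are written with the correct variance; once the convention is fixed the identity is a one-line expansion. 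I would close by remarking that, conversely, any operator of the form $\sum_j T_j R_{D_j}$ that maps sections of $G\times_\tau W$ to sections of $H\times_\eta F$ arises from such a $\phi$, so Fact~\ref{eq:invdifop} is genuinely an equivalent description and not merely a consequence.
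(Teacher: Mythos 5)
Your proposal is correct and matches the paper's (essentially proofless) treatment: the Fact is just the dual-basis unwinding of the Kobayashi--Pevzner formula, and your rank-one operators $T_{(k,i)}=z_k\otimes w_{k,i}^\vee$ are exactly the inverse of the correspondence $\phi(z^\vee)=\sum_j D_j\otimes T_j^t(z^\vee)$ that the paper records immediately after the statement. Your handling of the $W,Z$ versus $W,F$ notational slip and your appeal to the quoted Kobayashi--Pevzner theorem for well-definedness over the $\mathcal U(\k)$-tensor relations are both appropriate.
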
 which corresponds to  $   \phi \in Hom_L(F^\vee , \mathcal U(\g)\otimes_{\mathcal U(\k)} W^\vee) $ defined by $\phi(z^\vee)=\sum_j D_j \otimes T_j^t(z^\vee)$. The $L$-equivariance of $\phi$ is represented by the equality \begin{equation} \sum_j D_j \otimes T_j = \sum_j Ad(l)(D_j ) \otimes \sigma(l) T_j \tau(l^{-1}), \,\, \forall \, l\in L \end{equation}
 \medskip

 We fix cocycles $c_\tau : G\times G/K \rightarrow Gl(W), c_\eta :H\times H/L \rightarrow Gl(F)$.
  The cocycles $c_\tau, c_\eta$ carries respective  parallelization  for the respective vector bundles $G\rightarrow G\times_\tau W$,  $H\rightarrow H\times_\eta F$  and respective isomorphisms  as the following picture shows

 \xymatrix{ \phantom{xx} & {\Gamma(G\times_\tau W)} \ar[r]^{\cong}  \ar@<1ex>[d]^S_{.}     & {\Gamma_{c_\tau} (G/K,W) } \ar@<1ex>[d]^{S^c} & f \ar[r]^{E_{c_\tau}\phantom{xxxxxx}} & c_\tau(\cdot,e)f(\cdot)  \\
    \phantom{xx} & {\Gamma(H \times_\sigma F)}  \ar@<1ex>[u]^T \ar[r]^{\cong} & {\Gamma_{c_\sigma} (H/L,F)} \ar@<1ex>[u]^{T^c}& g \ar[r]^{E_{c_\sigma}\phantom{xxxxxx} } & c_\sigma(\cdot,e)g(\cdot). }

    Here, $\Gamma, \Gamma_c$ represents   $\Gamma^\infty, \Gamma_{c_\tau}^\infty, L^2, L_{c_\tau}^2$ and so on.

 \medskip

 In particular, when $H=G$, $(\tau,W)=(\eta, F)$ we obtain three families of differential operators on $\Gamma^\infty(G/K,  W)$, namely,
 \begin{equation}\label{eq:difop} \text{For}, \, D \in \mathcal U(\g) \rightsquigarrow \dot{\pi}_{c_\tau} (D),\,\, L_D^c: = E_{c_\tau}  L_D E_{c_\tau}^{-1}. \end{equation} \begin{equation*} \text{For} \,\, D \in (\mathcal U(\g)\otimes Hom_\C(W,W))^{(Ad \otimes \tau^\vee \otimes \tau)(K)} \rightsquigarrow R_D^c := E_{c_\tau}  R_D E_{c_\tau}^{-1} . \end{equation*}   Moreover, the action of  $G$ on $G/K$  by left translation gives rise to another family of differential operators $L_D^{G/K} , D\in \mathcal U(\g) $ by the  formula for $X\in \g$, $L_X^{G/K}(f)(x):=\{\frac{d}{dt}  f(exp(-tX)x)\}_{t=0}$. We would like to recall the following identities \begin{multline*} \text{ For}\, x \in G, X \in \g, f \in C^\infty(G/K, W)= C^\infty(G)\otimes W) \\
 \dot{\pi}_{c_\tau}  (X)(f)(x)=L_X^{G/K}(f)(x)+ \big\{\frac{d}{dt}   c_\tau(exp(tX), exp(-tX)x)\big\}_{t=0} f(x).\end{multline*} \begin{equation}
 L_D^c=\dot{\pi}_{c_\tau} (D),\,\, D\in \mathcal U(\g). \end{equation}
 \begin{equation} L_D(f)(e)=R_{\widecheck D}(f)(e), D\in \mathcal U(\g), f\in C^\infty(G,W). \end{equation}

 Here, $D\mapsto \widecheck D$ is the anti automorphism of $\mathcal U(\g)$ associated to the map $\g \ni X \mapsto -X \in \g$.

 \smallskip
  The\, equality  \,$ L_D^c(f)(e)=R_{\widecheck D}^c(f)(e), D\in \mathcal U(\g), f\in C^\infty(G/K,W)$, is not true, as it readily follows from the formula for $L_X^c$.

 \smallskip
 However, it holds $[L_D^c (f)](e)= [R_{\widecheck D}( c_\tau (\cdot,o)^{-1}f(\cdot ))](e)$.

 \smallskip
 We could define $R_D^{cnv}$ for any $D\in \mathcal U(\g)\otimes Hom_\C(W,W)$ as follows: Let $P_0$ the projector of $C^\infty(G)\otimes W $ onto $(C^\infty(G)\otimes W)^{R\otimes \tau (K)}   $ , the projector is given by integration along $K$. For any $D\in \mathcal U(\g)$ we   set

 \phantom{xxxxxxxxxxxxxxxx} $R_D^{cnv} =E_{c_\tau} P_0 R_D E_{c_\tau}^{-1}   $.

 Certainly, the definition of differential operator in the language of supports, yields $R_D^{cnv}$ is a $G$-invariant differential operator on $\Gamma^\infty(G\times_\tau W)$. Whence, the result of Kobayashi-Pevzner just quoted, implies the existence of $D_0 \in (\mathcal U(\g)\otimes Hom_\C(W,W))^{(Ad\otimes \tau^\vee \otimes \tau)(K)}$ so that $R_D^{cnv} = R_{D_0}^c $. In Theorem~\ref{prop:Sisnicediff}  we show
   $D_0=Q_0(D)    $. Here, $Q_0$ is usual projector from $\mathcal U(\g)\otimes Hom_\C(W,W)$ onto $(\mathcal U(\g)\otimes Hom_\C(W,W))^{(Ad \otimes \tau^\vee \otimes \tau)(K)} $.
 \subsubsection{ } The technique developed in the next result has several   applications. We would like to point that the result generalizes \cite[Ch. II Theorem 4.6]{Hel}.  \begin{thm} \label{prop:Sisnicediff} Let $S: H^2 (G, \tau)\rightarrow H^2(H, \sigma) $ be a continuous $H$-intertwining map. We assume there exists a differential operator $D :C^\infty(G)\otimes W \rightarrow C^\infty (H)\otimes Z$ so that $D$ restricted to   $ H^2 (G, \tau)$ is equal to $S$ and $D=\sum_j T_j R_{D_j}, T_j \in Hom_\C(W,Z), D_j \in \mathcal U(\g)$. Then, there

 \noindent
   exists  $D_0=\sum_j P_j \otimes L_j  \in (\mathcal U(\g)\otimes Hom_\C(W,Z))^{(Ad \otimes \tau^\vee \otimes \sigma)(L)} $, so that $D_0 =\sum P_j R_{L_j} $ restricted to  $ H^2 (G, \tau)$ is equal to $S$. \end{thm}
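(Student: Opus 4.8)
The plan is to average the given differential operator $D = \sum_j T_j R_{D_j}$ over the compact group $L$ to force the required $(Ad\otimes\tau^\vee\otimes\sigma)(L)$-equivariance, and then to check that this averaging does not change the restriction of $D$ to $H^2(G,\tau)$. First I would recall that, by the characterization of invariant differential operators in \ref{sub:gener2} (following \cite{Wal,KP1}), an expression $\sum_j T_j R_{D_j}$ with $T_j\in Hom_\C(W,Z)$, $D_j\in\mathcal U(\g)$ descends to a well-defined map $\Gamma^\infty(G\times_\tau W)\to\Gamma^\infty(H\times_\sigma Z)$ precisely when the element $\sum_j D_j\otimes T_j\in\mathcal U(\g)\otimes Hom_\C(W,Z)$ is fixed by the action $Ad\otimes\tau^\vee\otimes\sigma$ of $L$ — equivalently when $\sum_j D_j\otimes T_j=\sum_j Ad(l)(D_j)\otimes \sigma(l)T_j\tau(l^{-1})$ for all $l\in L$, the equivariance identity displayed just before Fact~\ref{eq:invdifop}. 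So the real content is: the hypothesis that $D$ \emph{restricted to} $H^2(G,\tau)$ equals $S$ (which is genuinely $H$-intertwining, hence in particular $L$-intertwining) does not a priori force $\sum_j D_j\otimes T_j$ itself to be $L$-fixed; we must replace it by its $L$-average without disturbing the restriction.

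Concretely, I would set
\[
D_0 \;:=\; \int_L \big(Ad(l)\otimes \sigma(l)(\cdot)\tau(l^{-1})\big)\Big(\sum_j D_j\otimes T_j\Big)\, dl \;=\; \sum_k P_k\otimes L_k,
\]
where $dl$ is normalized Haar measure on $L$; by construction $D_0\in(\mathcal U(\g)\otimes Hom_\C(W,Z))^{(Ad\otimes\tau^\vee\otimes\sigma)(L)}$, and I would write it as $\sum_k P_k R_{L_k}$ in the notation of the statement. The key computation is that for $f\in H^2(G,\tau)$ and $h\in H$ one has $(D_0 f)(h) = (Df)(h)$. This uses three standard facts: (i) for $l\in L$, $R_{Ad(l)D_j} = \tau$-conjugation of $R_{D_j}$ composed with right translation $R_l$, i.e. $R_{Ad(l)X}f = R_l R_X R_{l^{-1}} f$ for $X\in\mathcal U(\g)$ and the semidirect structure of $\mathcal U(\g)$; (ii) since $f\in H^2(G,\tau)$ satisfies $f(xk)=\tau(k^{-1})f(x)$, the function $R_{D_j}f$ transforms correspondingly, and evaluating at $hl$ versus $h$ absorbs exactly the $\sigma(l)\tau(l^{-1})$ twist; (iii) $S=\operatorname{res}(D)$ is $L$-equivariant because it is $H$-equivariant, which is what makes the integrand constant in $l$ after restriction. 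Carrying out (i)–(iii) shows the integrand $\big(Ad(l)\otimes\sigma(l)(\cdot)\tau(l^{-1})\big)(\sum_j D_j\otimes T_j)$, when applied to $f$ and evaluated on $H$, is independent of $l$ and equal to $Df|_H = Sf$; hence integrating gives $D_0 f|_H = Sf$ as well.

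The main obstacle I anticipate is step (ii)–(iii): making precise that "evaluating $R_{D_j}f$ at points of $H$ after $Ad(l)$-twisting reproduces the $\sigma(l)\tau(l^{-1})$ factor," since $R_{D_j}$ need not preserve the $\tau$-covariance condition on all of $C^\infty(G)\otimes W$ — it only does so after the relevant projection, as the $R_D^{cnv}$ discussion in \ref{sub:gener2} emphasizes. The clean way around this is to work on $H^2(G,\tau)$ itself, where $f$ is already $\tau$-covariant and $K$-finite, so that $R_{D_j}f$ is a legitimate element of $C^\infty(G)\otimes W$ on which the substitution $x\mapsto xl$ interacts with the covariance of $f$ in a controlled way; then the $L$-average lands in $\Gamma^\infty(H\times_\sigma Z)$ automatically and agrees with $S$ by the equivariance of $S$. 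Once that identity $(D_0f)(h)=(Sf)(h)$ is established for all $f\in H^2(G,\tau)$, $h\in H$, the theorem follows, with $D_0=\sum_k P_k R_{L_k}$ the asserted invariant operator.
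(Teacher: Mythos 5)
Your proposal is correct and is essentially the paper's own argument: your $D_0$, the $L$-average of $\sum_j D_j\otimes T_j$, is exactly the paper's trivial-isotypic component of $D$ under $Ad\otimes\tau^\vee\otimes\sigma$ (the projection formula \ref{eq:proyDeta} specialized to $\eta=Triv$ is your integral), and the identity driving your steps (i)--(ii), namely $(R_{D_j}f)(hl)=\tau(l^{-1})(R_{Ad(l)D_j}f)(h)$ for $\tau$-covariant $f$ (using only the covariance of $f$, not of $R_{D_j}f$, as you correctly note in resolving your anticipated obstacle), is precisely the computation in the paper's proof. The only minor divergence is the concluding step: the paper decomposes $D$ over all $L$-types and shows each nontrivial component maps $(C^\infty(G)\otimes W)[Triv]$ into $(C^\infty(H)\otimes Z)[\eta]$, so that directness of the sum forces those components to vanish on $H^2(G,\tau)$, whereas you evaluate the averaged operator directly and use the $\sigma$-covariance of $Sf\in H^2(H,\sigma)$ to see the integrand is constant in $l$ -- both conclusions rest on the same covariance identity, so this is a streamlining rather than a different method.
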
 In Kobayashi-Pevzner   notation, $D_0 \in {\rm Diff}_H\big( (C^\infty(G)\otimes W)^{(Ad \otimes \tau)(K)} ,$ $ (C^\infty (H)\otimes Z)^{(Ad \otimes \sigma)(L)} \big)\equiv {\rm Diff}_H( \Gamma^\infty(G\times_\tau W) , \Gamma^\infty(H\times_\sigma Z)  )$   represents $S$. That is, in vector bundle language, and,  according to definition \ref{eq:invdifop},
 $S$ is equal to the restriction of a $H$-invariant differential operator  from $\Gamma^\infty(G\times_\tau W)    $ into $\Gamma^\infty(H\times_\sigma Z)   $.
 \smallskip

  \begin{proof}  For $\nu \in \widehat K$, let $\chi_\nu$ (resp. $d_\nu$) be the character of $\nu$ (resp. the dimension of $\nu$). Then, for the representation $R\otimes \tau$,   the isotypic component corresponding to $\nu$  is\\
 $(C^\infty(G)\otimes W )[\nu]:=\{ f: d_\nu \int_K \bar{\chi}_\nu(k) \tau(k) (f(xk)) dk =f(x) \}$, then $C^\infty(G)\otimes W=\mathrm{Cl}(\oplus_{\nu \in \widehat K}C^\infty(G)\otimes W[\nu])$ and $C^\infty(G)\otimes W[Triv]:=\{ f:   \tau(k)(f(xk) ) =f(x) \}$.
 \smallskip

 Similarly, for $\eta \in \widehat L$ and the action $R\otimes \sigma$    the isotypic component is
  $(C^\infty(H)\otimes Z )[\eta]:=\{ f: d_\eta \int_L \bar{\chi}_\eta(l) \sigma(l) (f(xl)) dl =f(x) \}$, then \phantom{sssssssssss}$C^\infty(H)\otimes Z=\mathrm{Cl}(\oplus_{\eta \in \widehat L}(C^\infty(H)\otimes Z)[\eta])$.

   \smallskip
  We recall   $Hom_L(Z^\vee, \mathcal U(\g)\otimes_{\mathcal U(\k)} W^\vee) = \big((\mathcal U(\g)\otimes_{\mathcal U(\k)} W^\vee)\otimes Z \big)[Triv] \equiv (S(\p)\otimes W^\vee \otimes Z)^{(Ad \otimes \tau^\vee \otimes \sigma)(L)}  $. The   equality is the expression for the isotypic component associated the trivial character for the representation   $Ad \otimes \tau^\vee \otimes \sigma$ of $L$. The   equivalence is given by means of  the symmetrization map.   For the representation   $Ad \otimes \tau^\vee \otimes \sigma$ of $L$,  we write the decomposition  $\mathcal U(\g)\otimes_{\mathcal U(\k)} W^\vee \otimes Z=\oplus_{\eta \in \widehat L} \,(\mathcal U(\g)\otimes_{\mathcal U(\k)} W^\vee \otimes Z)[\eta]$.

 \noindent
   We write $D=D_0 +\sum_{\eta \not= Triv \in \hat L} D_\eta, D_0 \in (\mathcal U(\g)\otimes_{\mathcal U(\k)} W^\vee \otimes Z)   ^{(Ad \otimes \tau^\vee \otimes \sigma)(L) }$, $D_\eta \in (\mathcal U(\g)\otimes_{\mathcal U(\k)} W^\vee \otimes Z)[\eta]$. We notice that $D\in (\mathcal U(\g)\otimes_{\mathcal U(\k)} W^\vee \otimes Z)[\eta]$ if and only if \begin{equation}\label{eq:proyDeta} \sum_s \int_L d_\eta \bar{\chi}_\eta(l)  \sigma (l)T_s \tau(l^{-1})R_{Ad(l)D_s} dl= \sum_s T_s R_{D_s} . \end{equation}

 \medskip

  To follow, we verify for $D=\sum_s D_s \otimes  T_s \in(\mathcal U(\g)\otimes_{\mathcal U(\k)} W^\vee \otimes Z)[\eta]$, it holds  $D (C^\infty (G)\otimes W)[Triv] )\subset (C^\infty(H)\otimes Z) [\eta]$.
  \smallskip
  For this, for $f \in (C^\infty (G)\otimes W)[Triv]$,  we must verify

  \phantom{xxxxxxxxxxxx} $\int_L d_\eta \bar{\chi}_\eta(l)  \sigma (l)D(f)(hl)dl=D(f)(h), h\in H$.
  \begin{equation*}
  \begin{split}
    \int_L d_\eta \bar{\chi}_\eta(l)  \sigma (l)D(f)(hl)dl &
  = \int_L d_\eta \bar{\chi}_\eta(l)  \sum_s \sigma (l)T_s (R_{ D_s}(f) (hl)) dl
  \end{split}
  \end{equation*}
    The equality    $(R_{ D_s}(f))(hl)=\tau (l^{-1}))(R_{Ad(l)D_s}(f))(h)$, applied to  the second member, yields the first member is equal to

   \phantom{xxxxxx} $= \int_L d_\eta \bar{\chi}_\eta(l)  \sum_s \sigma (l)T_s \tau(l^{-1}) (R_{Ad(l)D_s}(f))(h )dl= D(f)(h)$.

  \medskip
    We now finish the proof of the Theorem, for this we notice

  \phantom{xxxxxxxxxxxxxx}  $H^2(G,\tau)\subset C^\infty (G)\otimes W[Triv]$, and\\
  $S(H^2(G,\tau))=D(H^2(G,\tau))= D_0 (H^2(G,\tau)) \oplus \oplus_{\eta \not= Triv \in \widehat L} D_\eta (H^2(G,\tau))$.\\
    Since   our hypothesis is

     $S(H^2(G,\tau))\subset H^2(H,\sigma ) \subset C^\infty(H )\otimes Z[Triv]$, and the sum is direct we have $  D_\eta (H^2(G,\tau))=0$ for  $\eta \not= Triv \in \hat L $, so $D$ restricted to $ H^2(G,\tau)$     is equal to $ D_0 $ restricted to $H^2(G,\tau)$. \end{proof}
  Actually, the proof applies to the subspaces $(C^\infty (G)\otimes W)[Triv]$,  $(C^\infty(H )\otimes Z)[Triv]$, to obtain that  any $D=\sum_s T_s R_{D_s}$ so that \\ $D((C^\infty (G)\otimes W)[Triv])\subset (C^\infty(H )\otimes Z)[Triv]$,   can be replaced by the component $D_0 $ as above.

  \subsection{Differential symmetry breaking  operators}

  \medskip
  To follow, we apply  the previous definitions and results to the different    realization's   of Discrete Series representations.
  \subsubsection{Symmetry breaking op's in case $G$-bundle model}
  \medskip
   We  recall and sketch a proof of a result in \cite[Thm 4.3]{OV2}.
   \begin{fact}\label{fact:sdiffop} We assume the restriction to $H$  of $(L,H^2(G,\tau))$ is an admissible representation. Then,  any intertwining map $S: H^2(G,\tau)\rightarrow H^2(H,\sigma)$ is the restriction of a differential operator.\end{fact}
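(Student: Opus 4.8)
The plan is to reduce the statement to the kernel criterion recalled just above, namely \cite[Lemma 4.2]{OV2}: the continuous $H$-map $S\colon H^2(G,\tau)\to H^2(H,\sigma)$ is the restriction of a $G$-invariant differential operator if and only if $K_S(\cdot,e)^\star z$ is a $K$-finite vector for every $z\in Z$. So the entire content of the Fact is to extract that $K$-finiteness from the admissibility hypothesis; everything else is the reproducing-kernel bookkeeping of Proposition~\ref{prop:propertiesksc}.

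First I would locate the vector $v_z:=K_S(\cdot,e)^\star z$ inside $H^2(G,\tau)$. By the group-model version of Proposition~\ref{prop:propertiesksc}(6) (transported through the unitary equivalences $E_{c_\tau},E_{c_\sigma}$, cf. \eqref{eq:ktaucktau}; or directly from \cite{Neeb}), the assignment $z\mapsto v_z$ is an $L$-map and
\[
v_z\in H^2(G,\tau)[H^2(H,\sigma)][Z],
\]
i.e. $v_z$ lies in the lowest-$L$-type space of the $H$-isotypic component of $H^2(G,\tau)$ of type $H^2(H,\sigma)$. Hence it suffices to show that every vector of $H^2(G,\tau)[H^2(H,\sigma)][Z]$ is $K$-finite.

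This last point is exactly where admissibility enters in an essential way. Since $res_H(H^2(G,\tau))$ is $H$-admissible, each Discrete Series $H^2(H,\mu)$ of $H$ occurs with finite multiplicity $m_\mu$, and—by the general theory of discretely decomposable (admissible) restrictions due to Kobayashi—the underlying Harish-Chandra module $H^2(G,\tau)_{K-fin}$, viewed as an $(\h,L)$-module, is the \emph{algebraic} direct sum $\bigoplus_\mu m_\mu\cdot H^2(H,\mu)_{L-fin}$. In particular the lowest-$L$-type space of each constituent already sits inside $H^2(G,\tau)_{K-fin}$, so $H^2(G,\tau)[H^2(H,\sigma)][Z]\subseteq H^2(G,\tau)_{K-fin}$, and every $v_z$ is $K$-finite. (This is the same phenomenon underlying \S2.4: the space $\mathcal U(\h_0)W=\mathcal U(\h_0)\big(H^2(G,\tau)[W]\big)$ manifestly lies in the Harish-Chandra module, since $H^2(G,\tau)[W]$ is the finite-dimensional lowest $K$-type and the Harish-Chandra module is stable under $\mathcal U(\g)\supseteq\mathcal U(\h_0)$, and the duality bijection $D$ of \eqref{eq:exD} matches it with $\mathcal L_{W,H}$, of which $H^2(G,\tau)[H^2(H,\sigma)][Z]$ is a direct summand.)

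With the $K$-finiteness of all $v_z$ in hand, \cite[Lemma 4.2]{OV2} produces a differential operator $C^\infty(G)\otimes W\to C^\infty(H)\otimes Z$, of the form $\sum_j T_j R_{D_j}$, whose restriction to $H^2(G,\tau)$ coincides with $S$; by Theorem~\ref{prop:Sisnicediff} it may in addition be taken $H$-invariant, though that refinement is not required here. The one genuinely substantial step is the middle one: without admissibility the spaces $H^2(G,\tau)[H^2(H,\mu)][Z_\mu]$ can fail to be $K$-finite—indeed can be infinite-dimensional—and it is precisely Kobayashi's criterion for admissible restriction that confines the relevant lowest-$L$-type vectors to the Harish-Chandra module of $H^2(G,\tau)$. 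Once that is granted, the rest of the argument is formal.
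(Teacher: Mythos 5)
Your proof is correct and follows essentially the same route as the paper: both arguments rest on locating $K_S(\cdot,e)^\star z$ in the lowest $L$-type of the $H^2(H,\sigma)$-isotypic component and then using Kobayashi's admissibility theory to conclude that these kernel vectors are $K$-finite, which yields the differential-operator statement. The only differences are cosmetic: you invoke the kernel criterion of \cite[Lemma 4.2]{OV2} together with the algebraic $(\mathfrak h, L)$-direct-sum decomposition of the Harish-Chandra module, whereas the paper cites \cite[Proposition 1.6]{Kob2} ($L$-finite vectors are $K$-finite in admissible restrictions) and then re-derives the criterion explicitly in order to record the recipe for passing from $K_S$ to a differential operator and back.
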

   To follow, we outline a verification of Fact~\ref{fact:sdiffop},   in turn, the proof yields how to represent $S$ as a differential operator from $K_S$ and viceversa. We recall in  \cite[Remark 3.8]{OV2}, we have shown that
      the function $ K_S(\cdot,e)^\star z$ is $L$-finite.
     Thus, owing to   \cite[Proposition 1.6]{Kob2}, "In an $H$-admissible  representation, $L$-finite vectors   are $K$-finite vectors",   our hypothesis forces that $K_S(\cdot,e)^\star z $ is a $K$-finite  vector in $H^2(G,\tau)$. We fix $0\not= w_0 \in W$.  Whence, the irreducibility of the representation $L_\cdot^G$ implies there exists a linear function $Z\ni z \rightarrow D_z \in \mathcal U(\g)$,  so that   $K_S(\cdot,e)^\star z=L_{D_z}^c K_\tau(\cdot,e)^\star w_0$.    Therefore,
  \begin{equation*}
     \begin{split} (S(f)(e),z)_Z & =\int_G (f(x), K_S(x,e)^\star z)_W dx \\
  & =\int_G (f(x), L_{D_z}(K_\tau) (x,e)^\star w_0)_W dx \\
  &= \int_G (L_{D_z^\star}f(x),  K_\tau (x,e)^\star w_0)_W dx\\
  &=(L_{D_z^\star}f(e),w_0)_W.
  \end{split}
  \end{equation*}
   Thus, $(S(f)(h),z)_Z=(S(L_{h^{-1}}(f))(e),z)_Z=(L_{D_z^\star}(L_{h^{-1}}(f))(e),w_0)_W$.
  \noindent
   We recall the equality $L_D (f)(e)=R_{\widecheck D} (f)(e)$. Hence,
    $(S(f)(h),z)_Z=(R_{\widecheck D_z^\star}(L_{h^{-1}} (f)) (e),w_0)_W =(R_{\widecheck D_z^\star}(f) (h),w_0)_W $. Whence, after we fix an orthonormal basis $\{z_j, 1\leq j \leq \dim Z \}$ for $Z$,  an expression of $S$ by means of left invariant differential operators is
   $$ S(f)(h)=\sum_{ 1\leq j \leq \dim Z} (R_{\widecheck D_{z_j}^\star}(f) (h),w_0)_W\,  z_j .$$ Next, we apply Theorem~\ref{prop:Sisnicediff} and obtain a differential operator $D_0$, according to Fact~\ref{eq:invdifop}, that represents $S$.  We define $T_j :W \rightarrow Z $ by the equality $T_j(w)=(w,w_0)_W \, z_j$. Hence, $S(f) =\sum_j T_j R_{\widecheck D_{z_j}^\star}(f) $. The component "$D_0$" of the right hand side, according to \ref{eq:proyDeta}, is $\int_L \sum_j  \sigma(l)T_j \tau (l^{-1}) R_{Ad(l)(\widecheck D_{z_j}^\star)} dl $. Next, we fix a ordered linear basis $\{ D_t\}_t$ for $\mathcal U(\g)$. Thus, $Ad(l)(\widecheck D_{z_j}^\star)=\sum_t a_{j,t}(l) D_t$ and we obtain
   $S(f)(h)=\sum_{j,t} \int_L \sigma(l)T_j \tau (l^{-1}) a_{j,t}(l)\, dl\, \, R_{D_t} (f)(h), $
    \noindent
   a expression of $S$ as differential operator in the realm of Fact~\ref{eq:invdifop}.
  \medskip
    A  expression for $S$ by means of infinitesimal left translations invariant differential operators is $$ S(f)(h)=\sum_{ 1\leq j \leq \dim Z} (L_{Ad(h)(D_{z_j}^\star) } (f)(h),w_0)_W\, z_j. $$ In the ordered linear basis $\{D_t\}_t$, we write $Ad(h)(D_{z_j}^\star) =\sum_t \phi_{t,j}(h) D_t$ with $\phi_{t,j}(\cdot)\in C^\infty (H)$, and we obtain the  expression $$ S(f)(h)=\sum_{t,j} \phi_{t,j}(h) (L_{D_t}f(h),w_0)_W \, z_j. \eqno{(\dag)}$$ This concludes the outline for the direct implication in  the proof of Fact~\ref{fact:sdiffop}

  Next, we answer the converse statement to Fact~\ref{fact:sdiffop}, that is, we sketch a way to recover the kernel $K_S$ when we know a representation  of $S$ as a differential operator. A tool  is  \cite[Proposition 16]{Kob2}  and the identity $S(K_\tau(\cdot,x)^\star w)(h)=K_S(x,h)w$, \cite[Proposition 3.7]{OV2}. The hypothesis yields the existence of  $D_j \in \mathcal U(\g)$, and $ \phi_j, \psi_j $,   $Hom_\C(W,Z)$-valued smooth functions, so that for $ f \in H^2(G,\tau)^\infty$, \begin{equation*}S(f)(h)=\sum_j \phi_j(h) L_{D_j}( f)(h)=\sum_j \psi_j(h) R_{D_j} (f)(h),   h \in H. \end{equation*} Both sums are finite sums!

   In \cite{OV2} it is verified that   $ K_\tau(\cdot,h)^\star w$ is a smooth vector for $H^2(G,\tau)$, hence, we have

  $ K_S(x,h)w =S(K_\tau(\cdot,x)^\star w)(h) $

    $=\sum_j \phi_j(h)L_{D_j}^{(1)}\big(K_\tau (\cdot ,x)^\star (w)\big)(h)= \sum_j \psi_j(h)R_{D_j}^{(1)}\big(K_\tau (\cdot,x)^\star (w)\big)(h). $

    Therefore,  knowing a expression of $S$ as differential operator,  we just follow the above recipe  to obtain the kernel $K_S$.

 \subsubsection[xx]{Symmetry breaking op's for symmetric space $G/K$-model}\label{sub:symmcase} We recall our hypothesis, the representation $res_H((\pi_{c_\tau}^c, H_{c_\tau}^2(G/K,W))$ is admissible and $S^c : H_{c_\tau}^2(G/K,W)\rightarrow H_{c_\tau}^2(H/L,Z)$ is symmetry breaking operator.   Under these hypothesis it follows $S^c$ is represented by a differential operator.

Note that we already have an isomorphism between the bundle model and the symmetric space model; but is convenient to work entirely within the symmetry space model.

 {\it To follow we show  how to compute a differential operator that represents   the operator $S^c$ knowing the kernel $K_{S^c}^c$   and viceversa.}

 We recall Proposition~\ref{prop:propertiesksc} shows \\ \phantom{xxxxx} $K_{S^c}^c(\cdot,eL)^\star z \in H_{c_\tau}^2(G/K,W) [H_{c_\sigma}^2(H/L,Z)][Z]$.

  We fix a nonzero $w_0 \in W$. Then, due that the representation $H_{c_\tau}^2(G/K,W)$ is $H$-admissible, we have $K_{S^c}^c(\cdot,eL)^\star z \in H_{c_\tau}^2(G/K,W)_{K-fin}$ \cite{Kob1}, also we have that infinitesimal representation in $H_{c_\tau}^2(G/K,W)_{K-fin}$ is algebraically irreducible. Thus, there exists a linear function $Z\ni z \mapsto D_z\in \mathcal U(\g)$ so that

 \phantom{equation} $\pi_{c_\tau}^c(D_z)\big(K_\tau^c(\cdot, e)^\star w_0\big)(\cdot)= K_{S^c}^c(\cdot,eL)^\star z, z\in Z.$

 Next, we point out equalities that will justify  a later development:

   $\pi_{c_\tau}^c(D_z)\big(K_\tau^c(\cdot, e)^\star w_0\big)(\cdot)=L_{D_z}^c \big(K_\tau^c(\cdot, e)^\star w_0\big)(\cdot)  $;

    $L_D^c(F)(x)=c_\tau (x,o)L_D^G  \big((c_\tau(\cdot,e))^{-1}F(\cdot)\big)(x)$; \,\,\, $c_\tau(e,yK)=I_W$;

  $K_\tau^c(xK,eK)=c_\tau(e,eK)K_\tau (x,e)c_\tau(x,eK)^{\star}= K_\tau (x,e)c_\tau(x,eK)^{\star}$.

    Equation~\ref{eq:kerofSc} justifies the first equality, the others are justified by means of the previous equalities
 \begin{equation*}
    \begin{split}   &  (S^c(f)(o),z)_Z\\ \phantom{xx}  &=\int_{G/K}((c_\tau(x,o)c_\tau(x,o)^\star)^{-1}f(x), K_{S^c}^c(x,eL)^\star z)_W dm_{G/K}(x) \\
 & =\int_{G/K} ((c_\tau(x,o)c_\tau(x,o)^\star)^{-1}f(x), L_{D_z}^c \big(K_\tau^c(\cdot, e)^\star w_0\big)(x) )_W dm_{G/K}(x)  \\
 \phantom{xx} & =\int_{G/K} (c_\tau(x,o)^\star (c_\tau(x,o)^\star)^{-1}c_\tau(x,o)^{-1}f(x),\\ \phantom{xx}  & \phantom{xxxxccccxxxxxxxxxccx} L_{D_z} \big(c_\tau(\cdot,e)^{-1}  K_\tau^c(\cdot, e)^\star w_0\big)(x) )_W dm_{G/K}(x)  \\
 &= \int_{G/K} (L_{D_z^\star}\big(c_\tau(\cdot,o)^{-1}f(\cdot)\big)(x),\,  c_\tau(x,o)^{-1} K_\tau^c (x,e)^\star w_0)_W dm_{G/K}(x)\\
 &= \int_{G/K} (L_{D_z^\star}\big(c_\tau(\cdot,o)^{-1}f(\cdot)\big)(x), \\ & \phantom{xxxxxxxxxxxxxxxxx} \,  c_\tau(x,o)^{-1}c_\tau(x,o) K_\tau  (x,e)^\star w_0)_W dm_{G/K}(x)\\
 &=(L_{D_z^\star}\big(c_\tau(\cdot,o)^{-1}f(\cdot)\big)(e),w_0)_W\\
 &=(R_{\widecheck D_z^\star}\big(c_\tau(\cdot,o)^{-1}f(\cdot)\big)(e),w_0)_W.
 \end{split}
 \end{equation*}
 We compute,
 \begin{equation*}
 \begin{split} (S^c(f)(h),z)_Z &=(c_\sigma(h,o)S^c(L_{h^{-1}}^c(f))(o),z)_Z\\
 & = ( S^c(L_{h^{-1}}^c(f))(o),c_\sigma(h,o)^\star z)_Z     \\ &=(R_{\widecheck D_{c_\sigma(h,o)^\star z}^\star}(L_{h^{-1}}(c_\tau(\cdot,o)^{-1}f(\cdot)))(o),w_0)_W \\ &= (R_{\widecheck D_{ c_\sigma(h,o)^\star z}^\star}( (c_\tau(\cdot,o)^{-1}f(\cdot)))(h),w_0)_W.
 \end{split}
 \end{equation*}
 We fix a linear basis $\{D_i\}$ for $\mathcal U(\g)$. Then, it readily follows from the fact, for $ X\in \g$,  $R_X$ is a derivation, that  there exists a finite family
    $\phi_i$  of   $Hom_\C(W,Z)$-valued  smooth functions on $H$, giving,
 \begin{equation}\label{eq:scasdiffop} S^c(f)(h\cdot o)= \sum_{1\leq j \leq \dim Z} (\sum_{i=1}^N \phi_i(h) R_{D_i} (f)(h\cdot o), z_j)_Z \, z_j. \end{equation}
   whence, $S^c$ is represented by a differential operator.

  Next, in a similar path to the development of the $G$-model,  we apply Theorem~\ref{prop:Sisnicediff} and obtain a differential operator $D_0$, according to Fact~\ref{eq:invdifop}, that represents $S^c$.

 \medskip

 To follow, we describe an algorithm to compute the kernel $K_{S^c}^c$ when we know a representation of $S^c$ by means of differential operators.

 The hypothesis is that $\pi_{c_\tau}$ is $H$-admissible, the tools are \ref{eq:scasdiffop} and the equality $ K_S^c(xK,hL)w=K_{S^\star}^c(hL,xK)^\star w=S (K_\tau^c(\cdot,xK)^\star w)(hL)$. Therefore

 \begin{equation*}
    \begin{split}  K_S^c(xK,hL)w   \\ &  =S (K_\tau^c(\cdot,xK)^\star w)(hL) \\ & =\sum_{1\leq j\leq \dim Z} (\sum_{i=1}^N \phi_i(h) R_{D_i} (K_\tau^c(\cdot,xK)^\star w)(h\cdot o), z_j)_Z \, z_j.
    \end{split}
    \end{equation*} and we have computed the kernel $K_{S^c}$.

   \begin{rmk} We would like to point out, the following fact shown in \cite[Lemma 4.2]{OV2}. $S^c$ is represented by a differential operator if and only if the function $K_{S^c}(\cdot,e)^\star z $ is a $K$-finite vector for each $z\in Z$.
   \end{rmk}

 \subsubsection{Case of  holomorphic imbedding $H/L \rightarrow G/K$}For this paragraph we assume, both symmetric spaces are Hermitian symmetric and the natural inclusion is holomorphic. We consider two holomorphic Discrete series representations realized, respectively on space of functions defined over the corresponding bounded symmetric domains obtained by Harish-Chandra. In this part we follow the notation in Section~\ref{sec:duality}. We quote the necessary notation.  $\g=\k +\p, \h=\l +\p_\h=\k \cap \h +\p \cap \h, \p=\p^+ +\p^-, \p_\h^+ =\p^+ \cap \h$, $\mathcal D \subset \p^+ $ the Harish-Chandra realization of the Hermitian symmetric space $G/K$,  $\mathcal D_\h \subset \p_\h^+ $ the realization of $H/L$. Thus, we have the holomorphic inclusions $\mathcal D_\h \subset \mathcal D $. Let $(\tau, W)$ (resp. $(\sigma ,Z) $) irreducible representations of $K$ (resp. $L$) so that the respective spaces of holomorphic functions $(L^\tau, V_\tau)$ (resp. $(L^\sigma, V_\sigma)$ on $L_\tau^2(\mathcal D , W)$ (resp. $L_\tau^2(\mathcal D_\h , Z)$) are nonzero. From now on, we assume $(L^\sigma, V_\sigma)$ is a irreducible $H$-subrepresentation of $(L^\tau, V_\tau)$. The action of $\g$ in $V_\tau$ has been computed by \cite{JV}, see \ref{sub:jv}, the formulas they obtained let us state.  For every $D\in \mathcal U(\g)$ (resp. $D\in \mathcal U(\h)$),

 $L_D^\tau$ (resp. $L_D^\sigma$) is a holomorphic differential operator on $V_\tau$ (resp. $V_\sigma$).

 \smallskip

 Next,  we fix a symmetry breaking operator $S : V_\tau \rightarrow V_\sigma$. Then, due that $V_\sigma$ is a reproducing kernel space, as in \ref{prop:propertiesksc}, it follows  there exists $K_S : \mathcal D \times \mathcal D_\h \rightarrow Hom_\C(W,Z)$ a smooth function, anti holomorphic in the first variable and holomorphic in the second variable, so that \begin{equation*} S(f)(z)=\int_\mathcal D K_S(w,z) K_\tau^c(\bar w,w)^{-1} f(w)\, dm_{G/K}(w),\ \forall\, f \in V_\tau,z\in \mathcal D_\h   . \end{equation*}
Here, we have appealed to equation~\ref{eq:ktaucktau} and its consequences.\\
  Our hypothesis implies $V_\tau$ is {\it $H$-admissible}, next, reproducing word by word the computation in \ref{sub:symmcase} and recalling $L_D^\tau$ is a holomorphic differential operator, we obtain $S$ is represented by a holomorphic differential operator.

  \cite{KP1} have strengthed this observation  to:

  \smallskip
  $S$ is represented by a constant coefficient holomorphic differential operator.

 \smallskip
 Actually, this follows from that $L_D^\tau, D\in \mathcal U(\p^+) \, (resp.\,  L_D^\sigma, D\in \mathcal U(\p_\h^+))$ is a constant coefficient holomorphic differential operator.

  \smallskip
  By means of the same computations as in \ref{sub:symmcase} we can derive $K_S$ from the knowledge of a differential operator that represents $S$ and viceversa. This problem has been analyzed by \cite[Section 3.3]{Na} in the holomorphic setting.

  \subsubsection{On the uniqueness of the map $z \rightarrow D_z$} In either the group model or symmetric space model, the function $(z \rightarrow D_z) \in Hom_\C (Z, \mathcal U(\g))$ so that $L_{D_z}^\cdot K_\tau^\cdot (\cdot,e)^\star w_0=K_S(\cdot,e)^\star z$ is far from unique, since  if we consider the annihilator $\mathcal A$ in $\mathcal U(\g)$ of the copy of $W$ in each model, then we may replace the function $D$ by $D+\tilde D$, where $\tilde D$ is an arbitrary element of $Hom_\C (Z, \mathcal A)$. However, when we consider the case $H/L \rightarrow G/K$ is a holomorphic embedding and both $V_\tau, V_\sigma$ are holomorphic representations,  we may replace the function $Z \ni  z\mapsto D_z\in \mathcal U(\g)$ by a function from $Z$ that takes on values  in the space $\mathcal U(\p^-)\otimes W$,   then, the new function $z\mapsto D_z$ is unique.

 \begin{prop} \label{prop:uniqness} $H/L \rightarrow G/K$ is a holomorphic embedding and both $V_\tau, V_\sigma$ are holomorphic representations. Then, there exists a unique linear function $Z\ni z\mapsto D_z:=\sum_i D_z^i \otimes w_z^i \in \mathcal U(\p^-)\otimes W$ so that $\forall z \in Z, \,K_S(\cdot,e)^\star z = \sum_i L_{D_z^i}^\tau (K_\tau(\cdot, e)^\star w_z^i)$.
 \end{prop}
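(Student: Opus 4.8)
The plan is to exploit the holomorphic structure to pin down the representative. In the holomorphic realization the Harish-Chandra module of $V_\tau$ is, by the theorem of Harish-Chandra on highest weight modules, a quotient of the generalized Verma module $\mathcal U(\g)\otimes_{\mathcal U(\q)} W$, where $\q = \k_\C \oplus \p^-$; by the PBW theorem this is isomorphic as a vector space to $\mathcal U(\p^+)\otimes W$, with $\p^+$ acting freely. Dually, and this is what we use: every $K$-finite vector of $V_\tau$ is obtained from the lowest $K$-type copy $K_\tau(\cdot,e)^\star W$ by applying operators $L_D^\tau$ with $D\in\mathcal U(\p^-)\otimes W$ (the $\p^-$ directions "lower" inside the holomorphic Discrete Series), and the annihilator of $W$ intersected with $\mathcal U(\p^-)W$ is trivial. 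Concretely, I would first establish the linear isomorphism
\begin{equation*}
\mathcal U(\p^-)\otimes W \;\xrightarrow{\ \sim\ }\; V_{\tau,K\text{-fin}},\qquad D\otimes w \mapsto L_D^\tau\big(K_\tau(\cdot,e)^\star w\big),
\end{equation*}
using the Jakobsen--Vargas formulas for the $\g$-action quoted in \ref{sub:jv}: the $\p^-$-action is by constant-coefficient holomorphic differential operators, the $\k$-action preserves the lowest $K$-type, and so the natural surjection from $\mathcal U(\g)W$ factors through $\mathcal U(\p^-)W$, while injectivity follows because a nonzero element of $\mathcal U(\p^-)W$ produces, via the free $\p^+$-action on the contragredient side (equivalently, by pairing against reproducing kernels $K_\tau(\cdot,x)^\star w'$ and differentiating at $x=0$), a nonzero vector.

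Granting this isomorphism, the proposition is immediate. By Proposition~\ref{prop:propertiesksc}(6) (its holomorphic analogue), $K_S(\cdot,e)^\star z$ lies in $V_{\tau,K\text{-fin}}$ for each $z$, so there is a unique element $D_z=\sum_i D_z^i\otimes w_z^i\in\mathcal U(\p^-)\otimes W$ with $K_S(\cdot,e)^\star z=\sum_i L_{D_z^i}^\tau\big(K_\tau(\cdot,e)^\star w_z^i\big)$; uniqueness of the assignment $z\mapsto D_z$ is exactly injectivity of the map above, and linearity in $z$ follows since $z\mapsto K_S(\cdot,e)^\star z$ is linear and the isomorphism is linear. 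This also explains the earlier remark: the general (non-holomorphic) ambiguity by $\operatorname{Hom}_\C(Z,\mathcal A)$, where $\mathcal A$ is the annihilator of $W$ in $\mathcal U(\g)$, collapses because $\mathcal A\cap(\mathcal U(\p^-)W)=0$ once we restrict to $\p^-$-type representatives.

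The main obstacle is the injectivity half of the displayed isomorphism — i.e.\ showing that $\mathcal U(\p^-)\otimes W$ maps \emph{freely} into $V_\tau$, equivalently that the generalized Verma module $\mathcal U(\g)\otimes_{\mathcal U(\q)}W$ is \emph{irreducible} (or at least that its unique irreducible quotient still contains a free $\mathcal U(\p^-)$-copy of $W$) for the parameter $\tau$ attached to a holomorphic Discrete Series. For Discrete Series parameters this is classical: the infinitesimal character is sufficiently regular and antidominant that the generalized Verma module is already irreducible, so the PBW basis $\{D\otimes w\}$ descends to a basis of $V_{\tau,K\text{-fin}}$. I would cite this (it is implicit in the Jakobsen--Vargas description and in the standard theory of unitary highest weight modules) rather than reprove it; the rest is bookkeeping with the reproducing-kernel identities from Proposition~\ref{prop:propertiesksc} transported to the bounded-domain model.
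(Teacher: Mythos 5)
Your proposal is correct and takes essentially the same route as the paper's own proof: both rest on the isomorphism $(V_\tau)_{K\text{-fin}}\cong \mathcal U(\p^-)\otimes W$ given by $D\otimes w\mapsto L_D^\tau\big(K_\tau(\cdot,e)^\star w\big)$ (which the paper simply quotes via \ref{eq:1} and \ref{sub:D1iso} rather than re-deriving from irreducibility of the generalized Verma module), after which membership of $K_S(\cdot,e)^\star z$ in $(V_\tau)_{K\text{-fin}}$ makes uniqueness and linearity of $z\mapsto D_z$ immediate. Only note the small convention slip in your opening sentence: since $\p^+$ annihilates the lowest $K$-type, the relevant generalized Verma module is induced from $\k_\C\oplus\p^+$ and is free over $\mathcal U(\p^-)$, which is what you in fact use in the rest of the argument.
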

  \begin{proof} Under our hypothesis, we may and  use   the notation in Section~\ref{sec:duality}.  Thus,    $(V_\tau)_{K-fin}$ is $K$-{\it isomorphic} to $\mathcal U(\p^-) \otimes W$ by means of the map $\mathcal U(\p^-) \otimes W \ni D\otimes w \mapsto L_D^\tau (K_\tau(\cdot, e)^\star w)(\cdot)$,  hence,  after we fix a ordered linear basis $\{w_i, i=1\cdots \dim W\}$ for $W$,  for each $v \in (V_\tau)_{K-fin}$, there exists unique family $ (D_i)_{1\leq i \leq \dim W} \in \mathcal U(\p^-)  $ so that $v=\sum_i L_{D_i}^\tau K_\tau (e,\cdot)^\star w_i$. Therefore, for each $z\in Z$ there is  unique family $D_z^i \in \mathcal U(\p^-) $ so that $K_S(\cdot,e)^\star z = \sum_i L_{D_z^i}^\tau (K_\tau(\cdot, e)^\star w_i)$. We define the new function $D_z$ to be function $Z\ni z \mapsto \sum_i D_z^i \otimes w_i$. This function does not depend on the choice of basis and is unique.
 \end{proof}

   A consequence of the previous discussion is that whenever $\tau $ is   one dimensional, that is, $V_\tau$ is a {\it scalar representation}, there is {\bf unique}, up to  a constant that  depends only  on the choice of $w_0$,  map $Z\ni z \rightarrow D_z \in \mathcal U(\p^-)$ so that $K_S(\cdot,e)^\star z =  L_{D_z }^\tau (K_\tau(\cdot, e)^\star w_0)$.

 \begin{rmk} We like to point out, that Nakahama in \cite[Section 3.3]{Na} has consider the problem of representing intertwining operators via differential operators  under the hypothesis  of Proposition~\ref{prop:uniqness}. His solution is  constructive and quite different to the one we found.
 \end{rmk}

 \subsection{Some properties of symmetry breaking operators} Here we treat a general discrete
series representation  (and not necessarily a symmetric pair for the branching
problem); we recall the Schmid construction via an elliptic operator (the "Schmid operator"),
namely the kernel of this operator yields the Discrete Series in question. Under the assumption $res_H( (L_\cdot, H^2(G,\tau)))$ is $H$-admissible in \cite[Theorem 4.11]{OV2} we find a proof that any continuous  symmetry breaking operator from $H^2(G,\tau)$ into $H^2(H,\sigma)$ extends to continuous symmetry  breaking operator from the maximal model of first representation given by the kernel in $C^\infty(G\times_\tau W)$  of the Schmid operator, into the maximal model of the second representation given by the Schmid operator,  and that any continuous symmetry breaking operator from the maximal model of   $(L_\cdot, H^2(G,\tau))$ into the maximal model of $(L_\cdot, H^2(H,\sigma))$ is represented by a differential operator. In \cite[Theorem 5.13]{KP1}, under the assumption of $res_H((L_\cdot, H^2(H,\sigma)) $ being $H$-admissible, they show, in the realm of holomorphic Discrete Series, that any symmetry breaking continuous operator from the maximal model given by holomorphic functions on $\mathcal D_G$ with values into the lowest $K$-type, into the maximal model given by holomorphic functions on $\mathcal D_H$ into the lowest $L$-type, carries the unitary model inside the maximal model  onto the unitary model contained in the maximal model. We would like to point out, that the result, as well as the proof, yields the following generalization.

 Let $Ker(D_G)$   denote the kernel of the Schmid operator, \cite{OV2},  owing to $D_G$ is elliptic, $Ker(D_G)$ is contained in $\Gamma^\infty(G\times_\tau W)$. As usual, we endow $\Gamma^\infty(G\times_\tau W)$ with the smooth topology. That is, the topology of uniform convergence on compact subsets of the sequence as well as any of its derivatives.   We have: $H^2(G,\tau)\subset Ker(D_G)$ is a dense subspace, the subspace of $K$-finite vectors of each of them are identical. A similar results holds for $H$. Let us recall that when $H^2(G,\tau)$ is a holomorphic Discrete Series, the Schmid operator is equal to the $\bar{\partial}$-operator.
 \begin{thm}\label{thm:xx} We assume $res_H((L_\cdot, H^2(G,\tau))$ is a  $H$-admissible representation. Then, the following four statements hold:

 (a) Any continuous $H$-intertwining linear map from $Ker(D_G)$ into $Ker(D_H)$ is the restriction of a differential operator.

 (b) Any continuous $H$-intertwining map from $H^2(G,\tau)$ into $H^2(H,\sigma)$ extends to a continuous linear map from $Ker(D_G)$ into $Ker(D_H)$.

 (c) Any nonzero continuous $H$-intertwining linear map from $Ker(D_G)$ into $Ker(D_H)$ maps continuously $H^2(G,\tau)$ onto $H^2(H,\sigma)$.

 $(d)$ Any $(\h,L)$ morphism   from $H^2(G,\tau)_{K-fin}$ into $H^2(H,\sigma)_{L-fin}$ extends to a continuous intertwining map from $H^2(G,\tau)$ into $H^2(H,\sigma)$.
 \end{thm}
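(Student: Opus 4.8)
The plan is to treat the four statements largely in sequence, since (b)--(d) can be reduced to facts already available in the excerpt or in \cite{OV2}, \cite{KP1}, while (a) is the statement that carries the analytic weight. For (a), I would argue exactly as in the proof of Fact~\ref{fact:sdiffop} and the surrounding discussion in \ref{sub:symmcase}, but with $H^2$ replaced by $Ker(D_G)$, $Ker(D_H)$. The key observation is that $H$-admissibility of $res_H(H^2(G,\tau))$ propagates to the elliptic kernels: since $H^2(G,\tau) \subset Ker(D_G)$ is dense and the two spaces have the same $K$-finite (hence the same $L$-finite) vectors, and since ellipticity forces $Ker(D_G) \subset \Gamma^\infty(G\times_\tau W)$, any continuous $H$-map $S : Ker(D_G) \to Ker(D_H)$ is determined by its restriction to the common $K$-finite subspace and is represented by a smooth kernel $K_S$ in the same way as before. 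Then the argument of \cite[Remark 3.8]{OV2} shows $K_S(\cdot,e)^\star z$ is $L$-finite, \cite[Proposition 1.6]{Kob2} (via admissibility) upgrades this to $K$-finite, and Theorem~\ref{prop:Sisnicediff} together with Fact~\ref{eq:invdifop} packages $S$ as the restriction of an $H$-invariant differential operator. I would emphasize that no new estimate is needed beyond the elliptic regularity already cited.

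For (b), I would invoke \cite[Theorem 4.11]{OV2} essentially verbatim: that result states precisely that a continuous symmetry breaking operator $H^2(G,\tau) \to H^2(H,\sigma)$ extends to the maximal (Schmid-kernel) models, and that the maximal models are realized as $Ker(D_G)$, $Ker(D_H)$. The only thing to check is that the topology on $Ker(D_G)$ used here (uniform convergence on compacta of the function and all derivatives) agrees with the maximal globalization topology on the relevant subspace, which follows from ellipticity and the Casimir eigenvalue property of $H^2(G,\tau)$ recorded in the introduction. For (c), combine (b) and (a): a nonzero continuous $H$-map $\Phi : Ker(D_G) \to Ker(D_H)$ restricts to an $(\h,L)$-morphism on $K$-finite vectors; by algebraic irreducibility of the Harish-Chandra module $H^2(G,\tau)_{K-fin}$ this restriction is either zero or injective, and a zero restriction would force $\Phi = 0$ by density, so $\Phi$ is injective on $K$-finite vectors and its image meets $H^2(H,\sigma)_{L-fin}$; admissibility then identifies the image of the $K$-finite part of $H^2(G,\tau)$ with $H^2(H,\sigma)_{L-fin}$, and continuity plus density propagate this to a continuous surjection $H^2(G,\tau) \to H^2(H,\sigma)$ — here one uses that $H^2$ carries the Hilbert topology and $Ker(D_G)$ the finer smooth topology, so the Hilbert-space image is closed and $H$-invariant, hence all of $H^2(H,\sigma)$ by irreducibility.

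For (d), the point is automatic continuity: an abstract $(\h,L)$-morphism $H^2(G,\tau)_{K-fin} \to H^2(H,\sigma)_{L-fin}$ exists in $Hom_L(Z, \mathcal U(\h_0)W)$-many copies by the Duality Theorem of \S\ref{sub:DualityinG}, and each such morphism, being a morphism between admissible Harish-Chandra modules of discrete series, extends uniquely to the maximal globalizations by the Casselman--Wallach / Kobayashi--Oshima theory of admissible restriction; one then restricts the extension to the unitary submodules using (c) and the extension part of (b) in reverse. Alternatively, and more self-containedly, one reproduces the kernel construction: the $(\h,L)$-morphism sends the lowest $L$-type of $H^2(H,\sigma)$ into a finite-dimensional $L$-stable subspace of $K$-finite vectors of $H^2(G,\tau)$, one builds the corresponding smooth kernel $K_T$ as in Proposition~\ref{prop:propertiesholo}, and checks the resulting integral operator is bounded $H^2(H,\sigma) \to H^2(G,\tau)$ using the reproducing-kernel estimates. \textbf{The main obstacle} I anticipate is part (c): showing that a nonzero intertwiner between the big elliptic-kernel spaces does not ``miss'' the unitary subspace and in fact lands surjectively onto $H^2(H,\sigma)$ — this is where one genuinely needs the interplay between the two topologies (smooth vs. Hilbert) together with admissibility, and it is the step most sensitive to getting the functional-analytic bookkeeping right, as opposed to (a), (b), (d) which are more mechanical given the cited machinery.
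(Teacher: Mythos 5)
The essential gap is in your treatment of (c), which you yourself flag as the hard part. First, the dichotomy you invoke is false: $H^2(G,\tau)_{K-fin}$ is irreducible as a $(\g,K)$-module, but the map $\Phi$ is only $H$-intertwining, i.e.\ an $(\h,L)$-morphism, so irreducibility of the $(\g,K)$-module says nothing about its kernel. In the $H$-admissible situation $H^2(G,\tau)_{K-fin}$ is an algebraic direct sum of infinitely many irreducible $(\h,L)$-modules, and any map into $Ker(D_H)$, whose $L$-finite part is the single irreducible module $H^2(H,\sigma)_{L-fin}$, necessarily annihilates every isotypic component except the one isomorphic to $H^2(H,\sigma)$; so $\Phi$ is essentially never injective on $K$-finite vectors. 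What the paper actually uses at this step is irreducibility of the \emph{target} $(\h,L)$-module $Ker(D_H)_{L-fin}=H^2(H,\sigma)_{L-fin}$, which forces the image of the $K$-finite vectors to be all of $H^2(H,\sigma)_{L-fin}$; your conclusion about the image can be salvaged this way, but not by the argument you give. Second, and more seriously, your passage from this algebraic statement to a \emph{continuous surjection} $H^2(G,\tau)\to H^2(H,\sigma)$ is unsupported: $\Phi$ is continuous only for the smooth topologies on $Ker(D_G)$, $Ker(D_H)$, and the assertion that ``the Hilbert-space image is closed'' has no justification (continuity in the smooth topology gives no bound in the $L^2$ norms, and a dense $H$-invariant image would not contradict anything). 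The paper closes exactly this hole by a different mechanism: it decomposes the relevant isotypic component $m_1(V_{\mu_1}^H)_{L-fin}$ into finitely many irreducible $\mathcal U(\h)$-submodules $B_j$, applies the automatic continuity theorem \cite[Lemma 8.6.7]{Wal} to extend $S$ from each $(B_j)_{L-fin}$ to a continuous $H$-bijection of $\mathrm{Cl}(B_j)$ onto $H^2(H,\sigma)$, assembles these into a continuous map $\tilde S$ on $H^2(G,\tau)$, extends $\tilde S$ to $Ker(D_G)$ by part (b), and then uses density of the $K$-finite vectors in the smooth topology to conclude $S=\tilde S$ on all of $Ker(D_G)$, whence $S$ is continuous and onto on the unitary models. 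Some substitute for this automatic-continuity step (Harish-Chandra/Wallach on each irreducible constituent, or Kobayashi's theorem) is indispensable; ``continuity plus density'' in the sense you describe does not produce it.

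The remaining parts are broadly in line with the paper: (a) and (b) are indeed quoted from \cite{OV2} (your re-derivation of (a) via the admissibility/$K$-finiteness-of-the-kernel argument and Theorem~\ref{prop:Sisnicediff} is consistent with how that result is proved there), and (d) is in the paper simply a citation of \cite{Kob4}. Note, however, that your attribution of (d) to Casselman--Wallach theory is off target: Casselman--Wallach concerns $(\g,K)$-morphisms and smooth globalizations of modules over the \emph{same} group, whereas here one has an $(\h,L)$-morphism from a $G$-module to an $H$-module, and the correct input is Kobayashi's automatic continuity result for discretely decomposable (admissible) restrictions, i.e.\ \cite{Kob4}; your alternative ``bounded integral operator'' sketch would also need a genuine argument for $L^2$-boundedness, which is not supplied.
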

 In \cite[Theorem 3.6]{Na}, for the holomorphic setting,  we find a proof of some of the statements in the Theorem.
 \begin{proof}[Proof of Theorem~\ref{thm:xx}]
 {\it (a)} and {\it (b)} are shown in \cite{OV2}. For {\it (c)} we follow \cite[Theorem 5.13]{KP1} quite closely.  Let $S : Ker(D_G)\rightarrow Ker(D_H)$ a continuous $H$-map.  $Ker(D_G)$ is maximal model \cite{Sch} \cite{Wo}, yields

 \noindent
 $S(Ker(D_G)_{K-fin}) =S(H^2(G,\tau)_{K-fin})$

 \phantom{xxxxxxxxxxxxxxxxxxxxxxxx} $= Ker(D_H)_{L-fin}=H^2(H,\sigma)_{L-fin}$.

 The hypothesis $res_H((L_\cdot, H^2(G,\tau))$ is $H$-admissible, gives \cite{DV} \\ $res_L((L_\cdot, H^2(G,\tau))$ is $L$-admissible and the work of  Kobayashi \cite{Kob1} \cite{Kob2}, we write, for short, $V_\lambda^G:=H^2(G,\tau),  V_{\mu_j}^H:=H^2(H,\sigma_j)$, then,

 $H^2(G,\tau)=\oplus_{1\leq j< \infty} V_\lambda^G[V_{\mu_j}^H]=\oplus_{1\leq j< \infty} m_j V_{\mu_j}^H$ \, (Hilbert sum),

 $H^2(G,\tau)_{K-fin}=   \oplus_{1\leq j<\infty} [m_j (V_{\mu_j}^H)]_{L-fin}$ \, (algebraic sum).

 Here, $\forall j, 1\leq m_j <\infty $; for $ i\not= j, V_{\mu_i}^H \not\equiv V_{\mu_j}^H$ .

 Thus, for all $j$ but one, $S$ carries the $V_{\mu_j}$ isotypic component in $H^2(G,\tau)_{K-fin}$  to the zero subspace, and let's say for $\mu_1=\mu$, ($V_\mu^H=H^2(H,\sigma)$),  $S(m_1(V_{\mu_1}^H)_{L-fin})\not= \{0\}$. Let $B_j$ linear, invariant $\mathcal U(\h)$-irreducible subspaces so that we  write $(V_\lambda)_{K-fin} [V_\mu^H]=m_1(V_{\mu_1}^H)_{L-fin}=\oplus_{1\leq j \leq m_1} B_j$,  and let's say $S(B_j)\not= \{0\} $ for exactly  $1\leq j \leq m \leq m_1$. Then $S$ restricted to each $B_j, 1\leq j \leq m$ is a equivalence with $H^2(H,\sigma)_{L-fin}$. By classical theory of Harish-Chandra, \cite[Lemma 8.6.7]{Wal}, $S$ extends to a continuous linear  $H$-bijection $S_j$  from the closure of $B_j$ in $H^2(G,\tau)$ onto $H^2(H,\sigma)$. We define $\tilde S=\sum_{1\leq j \leq m} S_j $ on $\oplus \mathrm{Cl}(B_j)_{1\leq j \leq m}$ and $\tilde S=0$ in either   $\mathrm{Cl}(\sum_{2\leq j <\infty}  m_j \mathrm{Cl}(V_{\mu_j}^H)_{L-fin}))$ or $\oplus_{m+1\leq j \leq m_1} \mathrm{Cl}(B_j)$. Thus, $\tilde S$ is a continuous $H$-map from $H^2(G,\tau)$ onto $H^2(H,\sigma)$. Owing to $(b)$, the map $\tilde S$ extends to a continuous linear map from $Ker(D_G)$ into $Ker(D_H)$ and the map $\tilde S$ restricted to $H^2(G,\tau)_{K-fin}$ agrees with the restriction of $S$. Since the subspace of $K$-finite vectors is dense in $Ker(D_G)$, we have they agree everywhere. Thus, $S$ maps $H^2(G,\tau)$ onto $H^2(H,\sigma)$, and we have shown $ (c) $. Finally, $(d)$ is shown in \cite{Kob4}.  \end{proof}

 \begin{rmk} {\it Summary on automatic continuity } After, we assume \\ $res_H((L_\cdot, H^2(G,\tau))$ is a admissible representation. Then, in \cite{Kob4} we find a proof of the following statements:

 $a)$ Any $(\h,L)$ morphism $T$ from $H^2(G,\tau)_{K-fin}$ into $H^2(H,\sigma)_{L-fin}$ extends to a continuous intertwining map from $H^2(G,\tau)$ into $H^2(H,\sigma)$.

 $(b)$ Any continuous intertwining linear operator from the space of smooth vectors in $H^2(G,\tau)$ into the space of smooth vectors in $H^2(H,\sigma)$ extends to a continuous morphism between the corresponding maximal model representations.

 We do not know whether or not $a), b)$ is true after we drop the $H$-admissibility assumption.

 \end{rmk}

 \section{Symmetry breaking operators as generalized gradients.}

 In \cite{DES} and references therein, we find a proof that  intertwining operators between representations realized as spaces of holomorphic functions are given by covariant differential operators. We may reinterpret their results in the language of symmetric spaces as follows,  consider   the symmetric pair $(G,G)$ so that $G/K\rightarrow G/K$ is a holomorphic embedding, and   respective  holomorphic representation's of  $G,G$.  Then,  they have shown that each symmetry breaking operator is represented by a covariant differential operator. In this section we generalize their result to an arbitrary symmetric pair $(G,H)$ and a $H$-admissible Discrete Series representation for $G$. To begin with, we recall that in \cite{OV2} we find a proof  that existence of a nonzero $H$-symmetry breaking operator represented by a differential operator forces the Discrete Series of $G$ that we are dealing with is $H$-discretely decomposable. In the same paper we find a proof that if every $H$-symmetry breaking operator is represented by differential operators, then the representation is $H$-admissible.

 For this section, $G$ is a arbitrary connected semisimple Lie group $K=G^\theta $ is a maximal compact subgroup of $G$. $(G,H:=(G^{\sigma})_0) $ is a symmetric pair, $H_0:=(G^{\sigma \theta})_0$ the associated subgroup to $H$.

 We fix $(L_\cdot , H^2(G, \tau))$   a $H$-admissible Discrete Series for $H$ of   lowest $K$-type $(\tau,W)$.

 Let $(L_\cdot, H^2(H, \sigma))$ be  a irreducible factor of $res_H(\pi)$.  The lowest $L$-type of $H^2(H, \sigma) $ is $(\sigma, Z)$. We note that for each representative of the equivalence class associated to $(\sigma, Z)$, we obtain a different space of functions $H^2(H,\sigma)$. We show that the expression of each symmetry breaking operator as "generalized gradient operator" is obtained from a particular choice of a representative of the class of $(\sigma, Z)$. In order to present the definition of "generalized gradient operator" we recall definitions and facts from \cite{OV}. For this section $\bullet$ denotes the natural representation of $L$ in the vector space  $ S^n(\p_{\h_0})\otimes W$ .The {\it normal derivative operator} of order $n$, $r_n$, is a map
   from \\ $ H^2(G,\tau) $ into $ C^\infty(H\times_{\bullet} Hom_\C(S^n(\p_{\h_0}), W))\equiv  C^\infty(H\times_{\bullet} S^n(\p_{\h_0})\otimes W) $.

  The formal definition of $r_n$ is:   for $X_1,\cdots, X_n \in \p_{\h_0}$,

  \phantom{xxxxxxx} $r_n(f)(X_1 \cdots X_n)(h)=\sum_{\iota \in \mathfrak S_n}R_{X_{\iota (1)}} \cdots R_{X_{\iota  (n)}} (f)(h)$.

   The first order normal derivative map is a gradient in the direction orthogonal to the tangent space to $H/L \hookrightarrow G/K$. The $n^{th}$-order normal derivative may be thought as    a  iteration of   first order normal derivative, whence, we call a $n^{th}$-order normal derivative a "generalized gradient", actually for $H=\{e\}$, $r_n$ is the iteration of   gradients. In  \cite{OV} we find a proof for: $r_n(H^2(G,\tau)) \subset L^2(H\times_{\bullet} Hom_\C(S^n(\p_{\h_0}), W))$ and $r_n$ is (2,2)-continuous.

 A {\it generalized gradient} representation of a symmetry breaking operator $S: H^2(G,\tau)\rightarrow H^2(H,\sigma)$ is a composition $S=\tilde R r_n$, where $R:  S^n(\p_{\h_0})\otimes W \rightarrow Z  $   is a $L$-map, and,   $\tilde R: C^\infty(H\times_{\bullet} S^n(\p_{\h_0})\otimes W) \rightarrow C^\infty(H\times_{\sigma} Z) $ is the map $\tilde R(g)(h):=R(g(h)), g \in C^\infty(H\times_{\bullet} S^n(\p_{\h_0})\otimes W), h\in H $. We would like to point out that the uniqueness of the lowest $L$-type of a Discrete Series representation yields that  any nonzero intertwining operator from $H^2(H,\sigma)\subset L^2(H\times_\sigma Z)$ into $H^2(H,\sigma_0)\subset L^2(H\times_{\sigma_0} Z_0)$ is the restriction of a operator of the type $\tilde R$, where $R$ is a equivalence from $Z$ onto $Z_0$.

 \begin{prop} \label{prop:des} Under the stated hypothesis,
 let $S : H^2(G,\tau) \rightarrow H^2(H,\sigma)$ be a symmetry breaking operator, then for a convenient choice of a representative of the equivalence class of $(\sigma, Z)$, $S$ is represented as a generalized gradient operator.
 \end{prop}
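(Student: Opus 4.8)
The plan is to combine the two main ingredients already at hand: first, the fact proved in Fact~\ref{fact:sdiffop} that under the $H$-admissibility hypothesis any symmetry breaking operator $S: H^2(G,\tau)\rightarrow H^2(H,\sigma)$ is represented by a $G$-invariant differential operator $D_0\in {\rm Diff}_H(\Gamma^\infty(G\times_\tau W),\Gamma^\infty(H\times_\sigma Z))$; and second, the algebraic description of such invariant differential operators via the Kobayashi--Pevzner isomorphism ${\rm Diff}_H(\Gamma^\infty(G\times_\tau W),\Gamma^\infty(H\times_\sigma Z))\cong Hom_L(Z^\vee,\mathcal U(\g)\otimes_{\mathcal U(\k)}W^\vee)$ recalled in \ref{sub:gener2}. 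The goal is then purely structural: to show that the element $\phi\in Hom_L(Z^\vee,\mathcal U(\g)\otimes_{\mathcal U(\k)}W^\vee)$ attached to $D_0$ can, after adjusting the representative of $(\sigma,Z)$, be factored through the symmetrized image $S^n(\p_{\h_0})\otimes W^\vee$ of the normal directions, so that $D_0$ becomes exactly the composition $\widetilde R\circ r_n$ with $R$ an $L$-map $S^n(\p_{\h_0})\otimes W\rightarrow Z$.

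The key steps, in order, are as follows. (1) Apply Fact~\ref{fact:sdiffop} to replace $S$ by an $H$-invariant differential operator $D_0=\sum_j P_j R_{L_j}$ with $P_j\in Hom_\C(W,Z)$ and $L_j\in\mathcal U(\g)$, equivalently by $\phi\in Hom_L(Z^\vee,\mathcal U(\g)\otimes_{\mathcal U(\k)}W^\vee)$. (2) Use the Cartan decomposition $\g=\k+\p$ and the further $\sigma\theta$-eigenspace decomposition $\p=\p_{\h}\oplus\p_{\h_0}$ (where $\p_\h=\p\cap\h$ and $\p_{\h_0}=\p\cap\h_0$ is the normal part), together with the PBW theorem, to see that $\mathcal U(\g)\otimes_{\mathcal U(\k)}W^\vee\cong S(\p)\otimes W^\vee\cong \bigoplus_{n}\big(S^n(\p_{\h_0})\otimes S(\p_\h)\otimes W^\vee\big)$ as $L$-modules. (3) Observe that when $D_0$ is applied to $H^2(G,\tau)\subset C^\infty(G\times_\tau W)$ and the result lands in $H^2(H,\sigma)$, the tangential part $S(\p_\h)$ of the differential operator acts (up to lower order and up to the identification used in \ref{sub:gener2}) through the $H$-action on $H^2(H,\sigma)$; so the essential content of $\phi$ is carried by its component in the top normal piece, i.e. there is a well-defined $n$ and an $L$-equivariant map $\bar\phi: Z^\vee\rightarrow S^n(\p_{\h_0})\otimes W^\vee$ whose adjoint $R: S^n(\p_{\h_0})\otimes W\rightarrow Z$ is an $L$-map, with $D_0 f = \widetilde R(r_n(f))$ for $f\in H^2(G,\tau)$. (4) Finally, invoke the uniqueness of the lowest $L$-type of a Discrete Series representation: if the $L$-map $R$ does not have $Z$ itself in its image, replace $(\sigma,Z)$ by the representative $Z_0 := R(S^n(\p_{\h_0})\otimes W)$ (or rather the $L$-irreducible constituent carrying the lowest $L$-type), which changes $H^2(H,\sigma)$ only by the canonical identification $\widetilde R_0$ discussed just before the statement; after this change $S=\widetilde R\, r_n$ is a genuine generalized gradient representation.

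The main obstacle I expect is step (3): making precise the sense in which the tangential derivatives are "absorbed" into the $H$-module structure of the target $H^2(H,\sigma)$, so that only the normal component of $\phi$ matters. Concretely one must argue that if $D_0$ carries $H^2(G,\tau)$ into $H^2(H,\sigma)$ then $D_0$ can be rewritten — after applying Theorem~\ref{prop:Sisnicediff} to project onto $L$-invariants — as $\sum_\alpha c_\alpha R_{X_1^{\alpha_1}\cdots X_m^{\alpha_m}}$ with all $X_i\in\p_{\h_0}$, i.e. as a normal derivative operator; this is exactly the content of \cite[Proposition 6.1]{OV3} in the case $\mathcal L_{W,H}=\mathcal U(\h_0)W$, but here we need it without that equality, only using that the single fixed $S$ under consideration does arise from a differential operator. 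The resolution is that $K_{S^\star}(e,\cdot)z=K_S(\cdot,e)^\star z$ always lies in the isotypic piece $H^2(G,\tau)[H^2(H,\sigma)][Z]$ (Proposition~\ref{prop:propertiesksc}(5)), and for the $H$-admissible setting this forces it into a copy of the lowest-$L$-type generating $\mathcal U(\h_0)W$, so the operator $D_z$ implementing $K_S(\cdot,e)^\star z = L_{D_z}^c K_\tau(\cdot,e)^\star w_0$ may be taken in $\mathcal U(\h_0)$ up to the annihilator — precisely the flexibility discussed in the subsection "On the uniqueness of the map $z\rightarrow D_z$." Once $D_z$ is chosen in $\mathcal U(\h_0)$, passing to the $\p_{\h_0}$-symmetric part via PBW and the symmetrization map gives the desired $r_n$-factorization, and steps (1), (2), (4) are then routine bookkeeping with $L$-representations.
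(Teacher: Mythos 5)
Your plan founders at the step you yourself flag as the main obstacle, and the proposed resolution of it is not available. From Proposition~\ref{prop:propertiesksc}(5) and $H$-admissibility you may only conclude that the vectors $K_S(\cdot,e)^\star z$ lie in the lowest-$L$-type isotypic space $\mathcal L_{W,H}$ (and are $K$-finite); this does \emph{not} force them into $\mathcal U(\h_0)W$, and consequently the implementing elements $D_z$ cannot in general be chosen in $\mathcal U(\h_0)$. The relation between $\mathcal L_{W,H}$ and $\mathcal U(\h_0)W$ is precisely the delicate point of the whole theory: as recalled in Section 5 and in \cite{OV3}, for a single operator $S$ one has the dichotomy $Z_S\subset\mathcal U(\h_0)W$ or $Z_S\cap\mathcal U(\h_0)W=\{0\}$, and in the second case $S$ is \emph{never} represented by a normal derivative operator; by \cite{KP2} this case really occurs (e.g. for $(\mathfrak{so}(m,2),\mathfrak{so}(m-1,2))$). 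Changing the representative of $(\sigma,Z)$ only conjugates $S$ by the constant bundle map $\tilde R_0$, which leaves $Z_S$ unchanged, so the ``convenient choice of representative'' cannot repair this. Thus your step (3) — the absorption of the tangential factor $S(\p_\h)$ and the factorization of $\phi$ through the top normal piece — is exactly the unproven core, and the argument you offer for it (taking $D_z\in\mathcal U(\h_0)$ up to the annihilator) is false in general; it would prove the much stronger statement that every symmetry breaking operator is a normal derivative operator, contradicting the examples the paper itself cites.

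The paper's proof takes a genuinely different route that avoids differential operators and $\mathcal U(\h_0)W$ altogether. It passes to the holographic operator $T=S^\star$, uses $T(H^2(H,\sigma)[Z])\subset H^2(G,\tau)[H^2(H,\sigma)][Z]$ and the fact that this subspace is intrinsic, and chooses the minimal $n$ with $r_n\bigl(T(H^2(H,\sigma)[Z])\bigr)\neq\{0\}$. A Harish--Chandra parameter/weight computation (via the duality theorem and Schmid's description of the $L$-types, under the equal rank and Borel--de Siebenthal simplifications) shows that $(\sigma,Z)$ occurs as an $L$-type of $S^n(\p_{\h_0})\otimes W$ while $H^2(H,\sigma)$ does not occur in $L^2\bigl(H\times_\bullet Hom_\C(S^j(\p_{\h_0}),W)\bigr)$ for $j<n$. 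Taking $Z_0$ to be the resulting $L$-copy inside $S^n(\p_{\h_0})\otimes W$, with $P$ the orthogonal projector onto it, the composition $\tilde P r_n T$ is a nonzero $H$-map between Discrete Series of $H$, hence equals $\tilde R_0$ for an $L$-isomorphism $R_0:Z\to Z_0$; Schur orthogonality $SS^\star=c\,I$ then gives $S=\tilde R_0^{-1}\tilde P r_n$, which is the generalized gradient form once the representative $Z_0$ of the class of $(\sigma,Z)$ is adopted. If you wish to keep your differential-operator route, you would still have to prove the tangential-absorption statement directly, and that essentially amounts to redoing this minimal-$n$ and weight analysis rather than invoking membership in $\mathcal U(\h_0)W$.
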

 \begin{proof} We define $T:=S^\star$. In \cite[3.1.7]{OV3}, we find a proof for the inclusion

 \phantom{xxxxxxxxx}  $T(H^2(H,\sigma)[Z])\subseteq  H^2(G,\tau)[H^2(H,\sigma)][Z]$.

 We claim  the subspace $T(H^2(H,\sigma)[Z])$ is intrinsic, that is, does not depend on the choice of  representative of the equivalence class of $(\sigma, Z)$.  In fact,    we fix an intertwining linear operator, which we may assume is unitary,  $R $  from $ (\sigma, Z) $ onto $ (\sigma_1, Z_1) $. Hence, the map  $ L^2(H\times_\sigma Z) \ni g \stackrel{\tilde R}\mapsto \big(h\mapsto R(g(h))\big)\in L^2(H\times_{\sigma_1} Z_1) $ is a unitary, $H$-equivalence that maps $H^2(H,\sigma)$ onto $H^2(H,\sigma_1)$, as well as $\tilde R$  maps $H^2(H,\sigma) [Z]$ onto $H^2(H,\sigma_1)][Z]$. Finally, $   T \tilde R^{-1} $ is equivalent to $T$.  Whence, we have shown the claim.

 Next, we fix the $n\geq 0$ so that for $j<n$,

 \phantom{xxxx}$r_j(T(H^2(H,\sigma)[Z]))=\{0\}, \text{and}, \,   r_n(T(H^2(H,\sigma)[Z]))\not=\{0\}$.

 To follow, for simplicity of exposition, we assume the rank of $K$ is equal to the rank of $L$ and the system of positive roots $\Psi$  determined by the Harish-Chandra parameter of $H^2(G,\tau)$ has the Borel de Siebenthal property,  for unexplained notation or  facts we refer to \cite[Section 4]{OV3}. The duality Theorem~\ref{sub:DualityinG}, \cite[Theorem 3.1]{OV3} together with the elements in the  proof of \cite[Lemma 4.5]{OV3} implies the Harish-Chandra   parameter of $(\sigma, Z)$ is equal to $\lambda +\rho_n +B$, here,  $B$ stands for a sum of noncompact  roots in $\Psi_{\h_0}$, we observe that  the coefficient, of the unique noncompact simple $\beta$ for $\Psi_{\h_0}$ in the expression of $B$ as a linear combination of simple roots,    is equal to $n$.

 An argument based on the fact, that any noncompact root in $\Psi_{\h_0}$ is equal to $\beta$ plus a linear combination of compact simple roots, as well as that the highest weight of any irreducible component of a  tensor product is equal to the highest weight of one factor plus  weight of the other factor,  shows that  $\lambda +\rho_n -\rho_L +B$ is not a $L$-type for $S^j(\p_{\h_0})\otimes W$ and $j<n$.

 We note that since $r_n(T(H^2(H,\sigma)))\not= 0$, Frobenius reciprocity yields:

 \phantom{ssssss} $r_n(T(H^2(H,\sigma)))$ contains a $L$-type in $S^n(\p_{\h_0})\otimes W$

 Now, a result of Schmid, see  \cite[Proof of Lemma 4.5]{OV3} gives that a $L$-type of $H^2(H,\sigma)$ is equal $\lambda +\rho_n -\rho_L +B+B_h$ where $B_h$ stands for a sum of roots in $\Psi_{\h}^n$. Next,  comparing  the coefficient of the  noncompact the simple root of $\Psi$ in $\lambda +\rho_n -\rho_L +B+B_h$ and the weight structure of $S^n(\p_{\h_0})\otimes W$ we obtain $B_h=0$ and hence, actually  $Z$ is an $L$-type of  $S^n(\p_{\h_0})\otimes W$. Thus, $H^2(H,\sigma)$ does not occur as a subrepresentation for $L^2(H\times_\bullet Hom_\C(S^j(\p_{\h_0}), W))$ for $j <n$.

  We set  $Z_0:=r_n(T(H^2(H,\sigma)[Z])))$ and we conclude the proof of Proposition~\ref{prop:des}. The subspace $Z_0$  of $S^n(\p_{\h_0})\otimes W$ affords a representation of $L$ equivalent to $(\sigma, Z)$. Let $P$ denote the orthogonal   projector in $S^n(\p_{\h_0})\otimes W$ onto $Z_0$. Then, $g \mapsto \tilde P(g)(\cdot):=P(g(\cdot))$ maps $ L^2(H\times_{\bullet} Hom_\C(S^n(\p_{\h_0}), W))$ into $L^2(H \times_\bullet Z_0)$, and $R:=\tilde Pr_n T$ is   a nonzero $H$-intertwining continuous linear map   from $H^2(H,\sigma)$ onto $H^2(H,\bullet)$. Hence, $R=\tilde R_0$, with $R_0 $ a $L$-map from $Z$ onto $Z_0$,  and the equality  $\tilde R_0=  \tilde Pr_n S^\star$ is true. Now $S S^\star =T^\star T= c I_{H^2(H,\sigma)}$ $c>0$. Thus, $S=\tilde R_0^{-1}\tilde P r_n$. Whence, we have verified   Proposition~\ref{prop:des}. \end{proof}

 \section{Holomorphic Embedding,  Duality Theorem.}\label{sec:duality} In this section we present a new and self contained proof of the Duality theorem for the holomorphic setting.
 We actually, present three different versions of the Duality Theorem in
 in the context of holomorphic embedding. The precise statements are in:  \ref{prop:firstversion}, \ref{prop:secondver}, \ref{prop:direciso}.
 The original result for this section is  Theorem~\ref{prop:kernforhol} on the structure of holographic operators. Later on, we derive consequences on the structure of symmetry breaking operators. For specific applications of the duality Theorem to branching laws,  see \cite{OV3}.

  A statement for the duality Theorem is:
 \begin{thm}\label{prop:dualfirstv} There exists a linear isomorphism between $Hom_H(V_\sigma,V_\tau)$ and $Hom_L(Z, \mathcal U(\h_0)W)$.
 \end{thm}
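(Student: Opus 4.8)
The plan is to produce the isomorphism $Hom_H(V_\sigma,V_\tau)\cong Hom_L(Z,\mathcal U(\h_0)W)$ by combining the general duality results already recorded in \ref{sub:DualityinG} with the special structure of the holomorphic situation, so that the only genuinely new content is an \emph{algebraic} identification that avoids the harmonic-analytic estimates used in \cite{OV3}. Concretely, I would proceed in four steps.

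\emph{Step 1 (from holographic operators to their kernels).} Since $V_\tau$ is a reproducing kernel space, any $T\in Hom_H(V_\sigma,V_\tau)$ is represented by a smooth kernel $K_T$, anti-holomorphic in the first variable, and by the analogue of Proposition~\ref{prop:propertiesholo}(5) the assignment $z\mapsto K_T(\cdot,eL)z$ (equivalently $K_{T^\star}(\cdot,eL)^\star z$) lands in $V_\tau[V_\sigma][Z]\subset\mathcal L_{W,H}^c$, and is an $L$-map from $Z$. This gives a linear map $\Phi: Hom_H(V_\sigma,V_\tau)\to Hom_L(Z,\mathcal L_{W,H}^c)$. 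One must check $\Phi$ is injective: an intertwining operator into a reproducing kernel space is determined by its kernel, and the kernel on the "diagonal slice" $(\cdot,eL)$ determines $K_T$ everywhere by the covariance relation in Proposition~\ref{prop:propertiesholo}(3), using $H$-equivariance; hence $\Phi(T)=0$ forces $T=0$. Surjectivity onto $Hom_L(Z,\mathcal L_{W,H}^c)$ is the content of the abstract duality theorem of \cite{OV3} restricted to the holomorphic case, or can be reproved by a Schmid-type argument; in any case $Hom_H(V_\sigma,V_\tau)\cong Hom_L(Z,\mathcal L_{W,H}^c)$.

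\emph{Step 2 (the algebraic identification $\mathcal L_{W,H}^c\cong\mathcal U(\h_0)W$).} This is the heart of the matter and where I expect the main obstacle to lie. In the holomorphic setting I would use the Harish-Chandra realization: $V_\tau$ is realized on holomorphic functions on $\mathcal D\subset\p^+$, $(V_\tau)_{K\text{-fin}}\cong\mathcal U(\p^-)\otimes W$ via $D\otimes w\mapsto L_D^\tau(K_\tau(\cdot,e)^\star w)$ (as in Proposition~\ref{prop:uniqness}), and analogously $\mathcal U(\h_0)W=\mathcal U(\h_0)(V_\tau[W])$ is built from the lowest $K$-type. Here $\h_0=\l+\p\cap(\text{the }-1\text{ eigenspace of }\sigma\theta)$, and in the holomorphic case $\p\cap\h_0$ is stable under the $\pm$-grading, so $\mathcal U(\h_0)W$ corresponds under the above isomorphism to $\mathcal U(\p_{\h_0}^-)\otimes W\subset\mathcal U(\p^-)\otimes W$. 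On the other hand, by the description in \ref{sub:DualityinG}(a)--(b), $\mathcal L_{W,H}^c$ is the span of lowest $L$-types of the $H$-factors, which by the branching analysis (Schmid's lemma, as invoked in the proof of Proposition~\ref{prop:des}) sit precisely inside $\bigoplus_n S^n(\p_{\h_0}^+)^\vee\otimes W$-type pieces, matching $\mathcal U(\p_{\h_0}^-)\otimes W$ weight-for-weight. The map $D$ of \eqref{eq:exD} then restricts to the desired bijective $L$-isomorphism, and in the holomorphic case one shows it can be taken to be the obvious inclusion/identity (cf. the remark after \ref{sub:DualityinG}(f)).

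\emph{Step 3 (conclusion).} Composing $\Phi$ with $D_*: Hom_L(Z,\mathcal L_{W,H}^c)\xrightarrow{\ \cong\ }Hom_L(Z,\mathcal U(\h_0)W)$, $\psi\mapsto D\circ\psi$, yields the claimed linear isomorphism $Hom_H(V_\sigma,V_\tau)\cong Hom_L(Z,\mathcal U(\h_0)W)$. I would record the inverse explicitly: given $\varphi\in Hom_L(Z,\mathcal U(\h_0)W)$, reconstruct the kernel by $K_T(\cdot,eL)^\star z:=D^{-1}(\varphi(z))$ and extend by $H$-covariance, then verify the resulting integral operator is bounded and $H$-intertwining using that $V_\tau$ is $H$-admissible (so the relevant sums over $L$-types converge). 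The main obstacle, as indicated, is Step 2: one must control the $(\h_0,L)$-module structure of $\mathcal U(\h_0)W$ and match it to the lowest-$L$-type span $\mathcal L_{W,H}^c$; the holomorphic hypothesis is exactly what makes this tractable, since it forces the grading-compatibility $\p\cap\h_0=\p_{\h_0}^+\oplus\p_{\h_0}^-$ and reduces the comparison to a computation with $S^n(\p_{\h_0})\otimes W$ as in the proof of Proposition~\ref{prop:des}. Everything else is bookkeeping with reproducing kernels already set up in Section~\ref{sec:sym(holo)}.
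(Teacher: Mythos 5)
Your overall architecture is the same as the paper's proof of Theorem~\ref{prop:firstversion}: evaluate the kernel of a holographic operator at the base point to get an $L$-map $z\mapsto K_T(o,\cdot)z$ into $\mathcal L_{W,H}^c$, establish injectivity from the covariance relation for $K_T$, identify $\mathcal L_{W,H}^c$ with $\mathcal U(\h_0)W$ by an $L$-isomorphism, and compose. Your Step 2 is essentially Proposition~\ref{prop:D}: the paper makes it precise by showing, via the Jakobsen--Vergne formulas, that $\mathcal L_{W,H}^c=\{p\in V_{K-fin}:\delta(x)p=0,\ \forall x\in\p_\h^+\}=\mathcal P(\p_{\h_0}^+,W)$, then composing the Killing-form identification $D_0:\mathcal P(\p_{\h_0}^+,W)\to S(\p_{\h_0}^-)\otimes W$ with $D_1:D\otimes w\mapsto L_D^\tau w$; your ``weight-for-weight'' matching via Schmid's lemma is sketchier but aims at the same identification. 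One aside in your Step 2 is wrong: $D$ cannot ``be taken to be the obvious inclusion/identity,'' since in general $\mathcal L_{W,H}^c\neq\mathcal U(\h_0)W$ (that equality is exactly the normal-derivative question analyzed in Section 5); the correct statement is Proposition~\ref{prop:qisd}, which lets you replace $D$ by the orthogonal projector onto $\mathrm{Cl}(\mathcal U(\h_0)W)$, or the Remark after f) in \ref{sub:DualityinG}, which replaces $r_0D$ by $r_0$.

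The genuine gap is surjectivity. The paper closes it with a short counting argument: since $V_\tau$ is $H$-admissible and a Discrete Series of $H$ is determined by its lowest $L$-type (which occurs with multiplicity one), each copy of $V_\sigma$ in $V_\tau$ contributes exactly one copy of $Z$ to $\mathcal L_{W,H}^c$ and no other $H$-factor can contribute any, so $\dim Hom_H(V_\sigma,V_\tau)=\dim\mathcal L_{W,H}^c[Z]=\dim\mathcal U(\h_0)W[Z]=\dim Hom_L(Z,\mathcal U(\h_0)W)$, and an injective linear map between finite-dimensional spaces of equal dimension is bijective. You instead offer two routes, neither of which works as stated: citing the abstract duality theorem of \cite{OV3} is circular for the purpose at hand (that theorem, specialized to the holomorphic case, is precisely the statement being reproved, and the declared point of Section~\ref{sec:duality} is a new self-contained algebraic proof), while your explicit inverse --- defining the kernel on the slice by $D^{-1}(\varphi(z))$, extending by $H$-covariance, and ``verifying the resulting integral operator is bounded and $H$-intertwining'' --- leaves unproved exactly the hard analytic content: that the covariantly extended kernel is well defined, lands in $V_\tau$, and yields a continuous operator. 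That verification is not ``bookkeeping''; it is the substance of the inversion formulas developed later (Theorem~\ref{prop:kernforhol} and Nakahama's formula, Proposition~\ref{prop:nakahama}). Replacing your surjectivity step by the multiplicity/dimension count above would repair the argument and bring it in line with the paper's proof.
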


 In order to present the elements of the Theorem, to follow, we recall the standard description
on a symmetric complex domain of the holomorphic Discrete Series.   (details in \cite{vDP}, \cite[XII.5]{Neeb2})

 $G$ a semisimple Lie group and a maximal compact subgroup $K$ of $G$.

 $\bigl(G,H=(G^\sigma)_0\bigr)$ is a symmetric pair.

 $H_0:=(G^{\sigma \theta})_0$ associated subgroup to $H$.

 $L=K\cap H$  a maximal compact subgroup of $H$.

 $H/L \rightarrow G/K$   a holomorphic embedding.

 $(L_\cdot^\tau ,V_\tau)$ is a holomorphic Discrete Series for $G$ of lowest $K$-type $(\tau,W)$, realized as holomorphic $W$-valued functions in the bounded symmetric domain $\mathcal D\equiv G/K$.      The action, $L_\cdot^\tau $ of $\g$,  is recalled bellow.

 $(L_\cdot^\sigma ,V_\sigma)$ is a holomorphic Discrete Series for $H$ of lowest $L$-type $(\sigma, Z)$, realized as holomorphic $Z$-valued functions in the bounded symmetric domain $\mathcal D_\h \equiv H/L$.

 Further notation:   $ T$ maximal torus of $K$ so that $U=T\cap H$ is a maximal torus of $L$.  $o :=$the coset $eK$.

 $\Psi$ holomorphic system in $\Phi(\g,\t)$, $\Psi_H, \Psi_{H_0}$ holomorphic systems in $\Phi(\h,\u), \Phi(\h_0,\u)$   so that
 $\sum_{\beta \in \Psi^n} \g_\beta:=\p^+=\p_\g^+= \p_{\h}^+ + \p_{\h_0}^+$ are isomorphic to the respective   holomorphic tangent spaces at $o$. Therefore $G\subset P_+K_\C P_{-}, \mathcal D\subset \p^+, H/L \equiv \mathcal D_\h \subset \p_\h^+, H_0/L \equiv \mathcal D_{\h_0} \subset \p_{\h_0}^+$.

 We recall the triangular  decomposition

 $P_+K_\C P_{-} \ni x=exp (x_+) x_0 exp (x_-), x_\pm \in \p^\pm, x_0 \in K_\C$.\\ The cocycle $c_\tau$ and the reproducing kernel $K_\tau^c$ that defines the Hilbert structure on  $V_\tau$ are:  for $g\in G, z,w \in \mathcal D$,   \\
 \phantom{xxxxxxx} $ c_\tau(g,z)=\tau ((g exp z)_0),\,\, K_\tau^c (w,z)=\tau  (((exp  \bar w)^{-1}exp z)_0)^{-1} $.

 We consider the space $L_\tau^2 (\mathcal D,W):=L_{c_\tau}^2(\mathcal D,W)$ and the subspace
 \begin{equation*} V_\tau :=V_\tau^G= \{ f\in  \mathcal O(\mathcal D,W) : \int_\mathcal D (K_\tau^c (w,w) f(w), f(w))_W dm_{G/K} (w) <\infty \}.  \end{equation*} We recall    the subspace $V_{K-fin}$ of $K$-finite vectors in $V_\tau$ is \begin{equation}\label{eq:1} V_{K-fin}=\mathcal P(\p^+,W)  \equiv \mathcal U(\g)\otimes_{\mathcal U(\k +\p^+)} W \equiv S(\p^-)\otimes W \end{equation}

 Here, we have identified $W$ with the subspace of constant functions.

 The isomorphisms in \ref{eq:1} are $L_D^\tau (w)\leftarrow [D\otimes w] \leftarrow D\otimes w$.

 Here, we use that $\p^\pm$ is an abelian Lie algebra, and hence, the symmetrization map may be thought as the identity map.

 The action $L_g^\tau :=\pi_{c_\tau}(g), g\in G$ in $\mathcal O(\mathcal D,W)$ is defined    by the formula

\phantom{xxxx}$L_g^\tau(f)(z)=\tau ( (g^{-1}exp z)_0)^{-1} f( (g^{-1}exp z)_+), g\in G, z\in \p^+$.

  \subsubsection{}\label{sub:jv} In  \cite{JV}   they have computed the
   the action of $\g$ (resp. $K$) in $\mathcal O(\mathcal D,W)$. For this

 we define $(\delta(x)f)(v)=\lim_{t\rightarrow 0} \frac{ f(v+tx)-f(v)}{t}$, then, for $p\in \mathcal O(\mathcal D,W)$,

 and for $x\in \k, L_x^\tau(p)(v)=\tau (x)(p(v))-(\delta([x,v]) p)(v)$,

 and for $x \in \p^+, L_x^\tau(p)(v)=-(\delta(x) p)(v)$,

 and for $x \in \p^-, L_x^\tau(p)(v)=\tau([x,v])(p(v))-\frac12(\delta([[x,v],v]) p)(v)$,

 and for $k \in K, L_k^\tau (p)(v)=\tau(k)(p(k^{-1}v))$.

 Then, it readily follows that $$V_\tau^{\p^+}=W , \,\,\, V_\tau^{\p_\h^+}=\{p \in V_{K-fin}: \delta(x)p=0, \forall x \in \p_\h^+ \}=\mathcal P(\p_{\h_0}^+,W).$$

 \medskip
 The hypothesis the inclusion $H/L \rightarrow G/K$ is a holomorphic embedding and $V_\tau$ is a holomorphic representation  yields that the restriction of $(L_\cdot^\tau , V_\tau)$ to $H$ is a $H$-admissible representation and the  totality of irreducible factors are holomorphic Discrete Series for $H$ for a reference \cite{KO}. Therefore, after we write $$res_H(V_\tau)=\oplus_{\mu \in Spec(res_H(V_\tau))} V_\tau[V_\mu^H],$$  the subspace $\mathcal L_{W,H}^c:=E_{c_\tau}(\mathcal L_\lambda )$, defined in \cite{Va}\cite{OV3},  equal to the sum of the sum of the lowest $L$-type of the totality of irreducible $H$-factors of $res_H(V_\tau)$ is equal to $\mathcal P(\p_{\h_0}^+,W)$. In fact,

 \begin{equation*} \begin{split} \mathcal L_{W,H}^c =E_{c_\tau}(\mathcal L_{W,H}) \ &  =\oplus_{\mu \in Spec(res_H(V_\tau))} V_\tau[V_\mu^H][V_{\mu^H +\rho_n^H}^L] \\ & =\{p \in V_{K-fin}: \delta(x)p=0, \forall x \in \p_{\h}^+ \}=\mathcal P(\p_{\h_0}^+,W). \end{split} \end{equation*}

 Via the Killing form, $B$, $\p_{\h_0}^-$ is in duality with $\p_{\h_0}^+$, which provides un isomorphism between $\mathcal P(\p_{\h_0}^+,W)$ and $S(\p_{\h_0}^-)\otimes W$. That is, the inverse  map to

 \phantom{xx} $\p_{\h_0}^- \otimes W  \ni Y\otimes w \mapsto (\p_{\h_0}^+ \ni X \stackrel{p_Y}{\longrightarrow} B(X,Y)w)\in \mathcal P(\p_{\h_0}^+,W),$

  extends to a   $L$-equivariant isomorphism

 \begin{equation*} D_0 :    \mathcal P(\p_{\h_0}^+,W) \rightarrow S(\p_{\h_0}^-)\otimes W .\end{equation*} The action of $L$ in $S(\p_{\h_0}^-)\otimes W$ is   tensor product   action.

 We recall $W$ is identify with the constant functions, the equality $\h_0=\p_{\h_0}^- +\l + \p_{\h_0}^+$ and that $V_\tau$ is a holomorphic representation. Next, as in \cite{Va}\cite{OV3} we consider the subspace  \begin{equation*} \begin{split} \mathcal U(\h_0)W  & :=\{L_D^\tau  w, D\in \mathcal U(\h_0), w\in W\} \\ & =\{L_D^\tau  w, D\in \mathcal U(\p_{\h_0}^-), w\in W\}\end{split} \end{equation*} and  the map $D_1$ defined by
 $$\mathcal U(\p_{\h_0}^-)\otimes W \ni D\otimes w \stackrel{D_1}{\longmapsto} L_D^\tau  w \in \mathcal U(\h_0)W.$$

 \subsubsection{}\label{sub:D1iso}Then, \ref{eq:1} yields $D_1$ is a $L$-equivariant isomorphism.

 Finally, we recall $\p_{\h_0}^-$ is an abelian Lie algebra, hence, the symmetrization map from $S(\p_{\h_0}^-)$ onto $\mathcal U(\p_{\h_0}^-)$ may be thought of an identification. Thus, we have given a new proof of \cite{Va} \cite[Proposition 4.9]{OV3} in  the holomorphic setting.

 \begin{prop}\label{prop:D} {\it   The map $D:=D_1 D_0$ is a $L$-equivariant,  degree preserving, linear isomorphism between $\mathcal L_{W,H}^c$ and $\mathcal U(\h_0)W$.}
 \end{prop}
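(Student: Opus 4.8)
The plan is to recognize $D=D_1D_0$ as a composition of maps all of which have already been identified above as $L$-equivariant linear isomorphisms, and then to track the natural grading through that composition. Explicitly, I would write the chain
\[
\mathcal L_{W,H}^c=\mathcal P(\p_{\h_0}^+,W)\xrightarrow{D_0}S(\p_{\h_0}^-)\otimes W=\mathcal U(\p_{\h_0}^-)\otimes W\xrightarrow{D_1}\mathcal U(\h_0)W,
\]
where the first equality is the identification of $\mathcal L_{W,H}^c$ with $\mathcal P(\p_{\h_0}^+,W)$ established just before the statement, the middle equality is the symmetrization identification (legitimate since $\p_{\h_0}^-$ is abelian), $D_0$ is the $L$-equivariant isomorphism coming from the Killing-form duality $\p_{\h_0}^+\leftrightarrow\p_{\h_0}^-$, and $D_1\colon D\otimes w\mapsto L_D^\tau w$ is the $L$-equivariant isomorphism furnished by \ref{eq:1} (see \ref{sub:D1iso}). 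Each arrow being an $L$-equivariant linear bijection, and the middle identification being $L$-equivariant, the composite $D$ is an $L$-equivariant linear isomorphism of $\mathcal L_{W,H}^c$ onto $\mathcal U(\h_0)W$; this disposes of every assertion except degree preservation.

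For the degree statement I would work inside $V_{K-fin}=\mathcal P(\p^+,W)\cong S(\p^-)\otimes W$ with its grading by polynomial degree. Both ends of the chain are graded subspaces: $\mathcal P(\p_{\h_0}^+,W)=\bigoplus_n\mathcal P^n(\p_{\h_0}^+,W)$, and, using the Jakobsen--Vargas formula for the $\p^-$-action recalled in \ref{sub:jv} (each application of an element of $\p^-$ to a constant raises the polynomial degree by exactly one, since $L_x^\tau(p)(v)=\tau([x,v])(p(v))-\tfrac12\delta([[x,v],v])p(v)$), the subspace $\mathcal U(\h_0)W=\mathcal U(\p_{\h_0}^-)W$ has $n$-th graded piece $S^n(\p_{\h_0}^-)\cdot W$. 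Then I would check the factors degree by degree: $D_0$ carries $\mathcal P^n(\p_{\h_0}^+,W)$ onto $S^n(\p_{\h_0}^-)\otimes W$ because $B$ pairs $\p_{\h_0}^+$ nondegenerately with $\p_{\h_0}^-$ and the rule $Y\otimes w\mapsto(X\mapsto B(X,Y)w)$ on the linear pieces extends multiplicatively; symmetrization preserves the grading; and $D_1$ carries $S^n(\p_{\h_0}^-)\otimes W$ onto the span of the $L_D^\tau w$ with $D\in S^n(\p_{\h_0}^-)$, which is precisely the $n$-th graded piece of $\mathcal U(\h_0)W$ just described. Composing gives $D(\mathcal P^n(\p_{\h_0}^+,W))=(\mathcal U(\h_0)W)_n$, i.e.\ $D$ is degree preserving.

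The points needing genuine (though not deep) care are: that $D_1$, a priori defined on all of $\mathcal U(\p_{\h_0}^-)\otimes W$, is injective — this holds because $\mathcal U(\p_{\h_0}^-)=S(\p_{\h_0}^-)$ sits inside $S(\p^-)=\mathcal U(\p^-)$, so $D_1$ is the restriction of the isomorphism of \ref{eq:1}; that the image of $D_1$ is all of $\mathcal U(\h_0)W$ rather than a proper subspace, which is the equality $\mathcal U(\h_0)W=\mathcal U(\p_{\h_0}^-)W$ recorded before the statement (a PBW-ordering argument using $\h_0=\p_{\h_0}^-+\l+\p_{\h_0}^+$ together with $\p_{\h_0}^+\cdot W=0$ and $\l\cdot W\subseteq W$ in the holomorphic realization); and matching the three incarnations of the $L$-action — via $L_\ell^\tau$ on $\mathcal P(\p_{\h_0}^+,W)$, via $\mathrm{Ad}\otimes\tau$ on $S(\p_{\h_0}^-)\otimes W$, and via $L_\cdot^\tau$ on $\mathcal U(\h_0)W$ — which is routine from $L_k^\tau(p)(v)=\tau(k)(p(k^{-1}v))$ and the $\mathrm{Ad}$-invariance of the Killing form. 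I expect no serious obstacle: the substance lies in the already-established identification $\mathcal L_{W,H}^c=\mathcal P(\p_{\h_0}^+,W)$ and in \ref{eq:1}, and the present proposition is their formal consequence.
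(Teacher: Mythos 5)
Your proposal is correct and follows essentially the same route as the paper: the paper likewise obtains $D$ by composing the already-established $L$-equivariant isomorphisms $D_0$ (Killing-form duality) and $D_1$ (from \ref{eq:1}, with $\mathcal U(\h_0)W=\mathcal U(\p_{\h_0}^-)W$ and symmetrization trivial since $\p_{\h_0}^-$ is abelian), and checks degree preservation exactly as you do, via the Jakobsen--Vergne formula showing $L_R^\tau(w)$ has the same degree as $R\in\mathcal U(\p^-)$. Your added remarks on injectivity, PBW, and matching the $L$-actions are consistent with, and only slightly more explicit than, the paper's argument.
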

 Obviously, $D_0$ preserves the degree of polynomials.  $D_1$ preserves the degree of polynomials owing to the explicit formula for $L_x^\tau , x\in \p^-$,   hence  $L_R(w)$ is a polynomial of the same degree as the degree of $R\in \mathcal U(\p^-)$.

 The map $D$ in Proposition~\ref{prop:D} is an example of  the map $D^c$ in \ref{sub:dualinsymm}.
  \subsubsection{Statement and proof of the first version of duality} \label{sub:first} We are ready to provide a  statement and  proof of the first version of the duality theorem for holomorphic setting.

 For this, we assume the inclusion $H/L \rightarrow G/K$ is holomorphic. Let $(L_\cdot^\tau , V_\tau)$ be a holomorphic Discrete Series representation for $G$ realized in a Hilbert subspace of  $\mathcal O(\mathcal D,W)$. Let $V_\sigma$ a holomorphic Discrete Series for $H$ of lowest $L$-type $(\sigma, Z)$,  realized as a Hilbert space of holomorphic $Z$-valued functions on  the bounded domain $  \mathcal D_\h \subset \p_\h^+$ associated to $H/L$. Thus, $res_H(V_\tau)$ is a $H$-admissible representation.    Each continuous linear,  $H$-map, $T$, from  $V_\sigma $ into $V_\tau$ is represented by a kernel $K_T :\mathcal D_\h \times \mathcal D \rightarrow Hom_\C (Z,W)$. Among the properties of $K_T$ we recall $\text{for}\, x\in \mathcal D, w\in W, K_T(\cdot, x)^*w \in V_\sigma$; $\text{for}\, z \in Z, K_T(o, \cdot)z \in V_\tau[V_\sigma][Z] \subset \mathcal L_{W,H} \subset V_{K-fin}$; the map $Z\ni z \rightarrow K_T(o,\cdot)z \in V_\tau$ is a $L$-map.
 \begin{thm}\label{prop:firstversion} Under the previous assumptions, we have a linear isomorphism between $Hom_H(V_\sigma,V_\tau)$ and $Hom_L(Z, \mathcal U(\h_0)W)$. An isomorphism is the map $$ Hom_H(V_\sigma,V_\tau)\ni T \mapsto \big(Z\ni z \mapsto D(K_T(o,\cdot)z)(\cdot)\in Hom_L(Z, \mathcal U(\h_0)W)\big).$$
 \end{thm}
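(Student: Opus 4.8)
The plan is to establish the isomorphism by verifying that the map $\Phi\colon T\mapsto\bigl(z\mapsto D(K_T(o,\cdot)z)\bigr)$ is well-defined, linear, injective, and surjective. Linearity is immediate since $T\mapsto K_T$ is linear and $D$ is linear. That $\Phi(T)$ lands in $\mathrm{Hom}_L(Z,\mathcal U(\h_0)W)$ follows from the recalled properties of $K_T$: for $z\in Z$ one has $K_T(o,\cdot)z\in V_\tau[V_\sigma][Z]\subset\mathcal L_{W,H}^c$, so $D$ applies to it and lands in $\mathcal U(\h_0)W$ by Proposition~\ref{prop:D}; and $z\mapsto K_T(o,\cdot)z$ is an $L$-map because $o=eL$ is $L$-fixed and $T$ is $H$-equivariant, while $D$ is $L$-equivariant by Proposition~\ref{prop:D}, so the composite is an $L$-map. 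Thus $\Phi$ is a well-defined linear map.

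First I would prove injectivity. Suppose $\Phi(T)=0$, i.e.\ $D(K_T(o,\cdot)z)=0$ for all $z\in Z$. Since $D$ is a bijection (Proposition~\ref{prop:D}), this forces $K_T(o,\cdot)z=0$ for all $z$. Now I would use $H$-equivariance together with the cocycle identity to propagate this: for $h\in H$ the transformation rule $K_T(hyL,hxK)=c_\tau(h,xK)K_T(yL,xK)c_\sigma(h,yL)^\star$ (the holomorphic analogue of Proposition~\ref{prop:propertiesholo}(3)), applied with $y=e$, shows that $K_T(hL,\cdot)$ is determined by $K_T(o,\cdot)$; hence $K_T\equiv0$ on $H/L\times G/K$, and since $T$ is recovered from $K_T$ via the reproducing-kernel formula (the analogue of Proposition~\ref{prop:propertiesholo}(1)), $T=0$.

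The surjectivity step is the main obstacle, and here I would invoke the structural results already assembled. Given $\psi\in\mathrm{Hom}_L(Z,\mathcal U(\h_0)W)$, I would transport it via $D^{-1}$ to an $L$-map $\psi_0:=D^{-1}\circ\psi\in\mathrm{Hom}_L(Z,\mathcal L_{W,H}^c)$. By the identification $\mathcal L_{W,H}^c=\bigoplus_{\mu}V_\tau[V_\mu^H][Z_\mu]$, an $L$-map from $Z$ into this space picks out, for the factor $V_\sigma$ with lowest $L$-type $Z$, a copy of $Z$ inside the isotypic component $V_\tau[V_\sigma]$, and by Schur/Harish-Chandra theory (using $H$-admissibility, so each $V_\mu^H$ occurs with finite multiplicity and the subspace of $K$-finite vectors decomposes as an algebraic direct sum of irreducibles, cf.\ the proof of Theorem~\ref{thm:xx}) this extends to an $(\h,L)$-morphism $(V_\sigma)_{L\text{-}fin}\to(V_\tau)_{K\text{-}fin}$, which by automatic continuity (Theorem~\ref{thm:xx}(d)) extends to a continuous $H$-map $T\colon V_\sigma\to V_\tau$. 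One then checks $\Phi(T)=\psi$ by tracing through the reproducing-kernel formulas: $K_T(o,\cdot)z$ is, up to the identification, exactly $\psi_0(z)$, so $D(K_T(o,\cdot)z)=\psi(z)$.

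The delicate point to watch is the compatibility of the two descriptions of the image $K_T(o,\cdot)z$ — one as an element of the reproducing-kernel space read off from $T$, the other as the vector $\psi_0(z)$ — and this is precisely where the holomorphic structure helps: in the holomorphic setting $V_{K\text{-}fin}=\mathcal P(\p^+,W)\cong S(\p^-)\otimes W$, the map $D=D_1D_0$ is explicit and degree-preserving, and $\mathcal L_{W,H}^c=\mathcal P(\p_{\h_0}^+,W)$ is the space of polynomials annihilated by $\delta(x)$, $x\in\p_\h^+$, so all the identifications are concrete polynomial identities rather than abstract multiplicity comparisons. I expect the bookkeeping in the surjectivity argument — ensuring the extended $T$ is the unique one hitting the prescribed $\psi$ and not merely some $T$ with $\Phi(T)\neq0$ — to be the part requiring the most care, handled by combining injectivity (already proved) with a dimension/multiplicity count that matches $\dim\mathrm{Hom}_H(V_\sigma,V_\tau)$ with $\dim\mathrm{Hom}_L(Z,\mathcal U(\h_0)W)$.
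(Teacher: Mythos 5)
Your argument is correct, and it follows the paper's skeleton for everything except surjectivity. The injectivity step is exactly the paper's: $D$ is injective on $\mathcal L_{W,H}^c$, and the equivariance/cocycle identity for $K_T$ shows the whole kernel is determined by $K_T(o,\cdot)$, hence $T=0$. Where you diverge is surjectivity: the paper disposes of it in one line by a dimension count --- $\dim Hom_H(V_\sigma,V_\tau)=\dim Hom_L(Z,\mathcal L_{W,H}^c)\stackrel{D}{=}\dim Hom_L(Z,\mathcal U(\h_0)W)$, using that a Discrete Series of $H$ is determined by its lowest $L$-type --- whereas you construct a preimage: pull $\psi$ back by $D^{-1}$, identify the $Z$-isotypic part of $\mathcal L_{W,H}^c$ with $V_\tau[V_\sigma][Z]$, extend by Schur theory on the isotypic component to an $H$-map $T$ with $T\circ\bigl(z\mapsto K_\sigma(\cdot,o)^\star z\bigr)=D^{-1}\psi$, and verify $\Phi(T)=\psi$. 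Your route buys an explicit inverse to the duality map (close in spirit to what the paper later extracts from Theorem~\ref{prop:kernforhol} and Nakahama's formula), at the cost of more bookkeeping; the paper's route is shorter but purely existential. Two small repairs to your version: the fact that $\mathcal L_{W,H}^c[Z]=V_\tau[V_\sigma][Z]$ (no other $H$-factor can contribute a lowest $L$-type isomorphic to $Z$) is precisely the uniqueness of a Discrete Series with given lowest $L$-type, which the paper invokes explicitly and you should too, since both your construction and the dimension count hinge on it; and Theorem~\ref{thm:xx}(d) is stated for the symmetry-breaking direction ($(\h,L)$-maps from $H^2(G,\tau)_{K\text{-}fin}$ into $H^2(H,\sigma)_{L\text{-}fin}$), not the holographic one you need --- but continuity of your extension is immediate anyway, since on the isotypic component $V_\tau[V_\sigma]\cong V_\sigma\otimes\C^m$ the extended map is $u\mapsto u\otimes v$, or one can quote Wallach's Lemma 8.6.7 as in the proof of part (c).
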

 \begin{proof} We recall a Discrete Series representation for $H$ is determined by its lowest $L$-type. The following computation shows the two involved spaces have the same dimension.

 \noindent
 $\dim Hom_H(V_\sigma, V_\tau)=\dim \mathcal L_{W,H}^c [Z] $

 \phantom{xxxxxxxxxxxxxxxxxx}$\stackrel{D}{=}\dim \mathcal U(\h_0)W[Z]=\dim Hom_L(Z, \mathcal U(\h_0)W)$.

 The map $T \mapsto D(K_T(e,\cdot))$
  is injective, owing to  $D$ is injective  and the equality
  $K_T(h\cdot o, w)=c_\sigma (..,..)K_T(o, h^{-1}\cdot w) c_\tau  (..,..)$.
 Equality of dimensions shows the surjectivity.
 \end{proof}

 We suggest to confront   Theorem~\ref{prop:firstversion} with \cite[Theorem 3.10]{Na}\cite[Theorem 3.1]{OV3} and subsection~\ref{sub:DualityinG}.

 \subsubsection{Multiplicity formulae} The duality Theorem provides a formula for $\dim Hom_H(H^2(H,\sigma), H^2(G,\tau))$ in terms of a partition function based on the noncompact roots for $(\h_0, \u)$ and a Weyl group, see  \cite{DV}, \cite{OV3} and references therein. In \cite{GW} we also find a multiplicity formula based on a partition function and Weyl groups. In \cite[Theorem 8.3]{Kob3} the author computes the Harish-Chandra parameters of all  the factors in $res_H(V_\tau)$. In \cite{HHO} it is shown a multiplicity formula for either weak factors or factors based on the map $r_n$ considered by \cite{OV}.  On the other hand,  Paradan, \cite{Pa}, Duflo and  Vergne, \cite{DVer}   have obtained $\dim Hom_H(V_\sigma, V_\tau)$ as the  volume of a orbifold.

 \subsection{Global structure of $\mathcal U(\h_0)W$ and second version of the Duality Theorem} We decompose $W=Z_1 +\cdots + Z_r$,  with $Z_j$ a $L$-invariant, $L$-irreducible linear subspace. Then,
 since for $ x\in \p_{\h_0}^+, w \in W$ we have $L_x^\tau w=0$. It readily follows that $\mathcal U(\h_0)Z_j$ is the underlying Harish-Chandra module of the irreducible square integrable representation of lowest weight $Z_j$. Therefore,
 \subsubsection{}\label{sub:5} $\mathcal U(\h_0)W =\oplus_{1\leq j \leq r} \,\mathcal U(\h_0)Z_j \equiv \oplus_{1\leq j\leq r} \, \mathcal U(\h_0)\otimes_{\mathcal U(\l_\C+ \p_{\h_0}^+)}  Z_j$.

 Whence, we have obtained the decomposition of $\mathcal U(\h_0)W$ as the sum of underlying Harish-Chandra modules of Discrete Series representations for $H_0$. The conclusion of Theorem~\ref{prop:firstversion} can be written as,

 \smallskip

\begin{thm}
 \label{prop:secondver}
$$ Hom_H(V_\sigma, V_\tau)\equiv \oplus_{1\leq j\leq r} \, Hom_L(Z, \mathcal U(\h_0)\otimes_{\mathcal U(\l_\C+ \p_{\h_0}^+)} Z_j).$$
\end{thm}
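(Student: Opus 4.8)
The plan is to read this off from Theorem~\ref{prop:firstversion} combined with the structural description of $\mathcal U(\h_0)W$ recorded in \ref{sub:5}. Theorem~\ref{prop:firstversion} already supplies a linear isomorphism $Hom_H(V_\sigma,V_\tau)\equiv Hom_L(Z,\mathcal U(\h_0)W)$, implemented by $T\mapsto\big(z\mapsto D(K_T(o,\cdot)z)\big)$, so the entire task reduces to decomposing the $(\h_0,L)$-module $\mathcal U(\h_0)W$ and then applying the additive functor $Hom_L(Z,-)$.

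First I would fix the decomposition $W=Z_1\oplus\cdots\oplus Z_r$ into $L$-irreducible subspaces. Using the Jakobsen--Vargas formulas in \ref{sub:jv} one sees that $L_x^\tau w=0$ for every $x\in\p_{\h_0}^+$ and $w\in W$: indeed $W$ is identified with the constants and for $x\in\p^+$ the action is $-\delta(x)$, which annihilates constants. Consequently each $Z_j$ is a lowest-$L$-type for the $\h_0$-action on $V_\tau$, the $(\h_0,L)$-module it generates equals $\mathcal U(\h_0)Z_j=\mathcal U(\p_{\h_0}^-)Z_j$, and the canonical surjection $\mathcal U(\h_0)\otimes_{\mathcal U(\l_\C+\p_{\h_0}^+)}Z_j\to\mathcal U(\h_0)Z_j$ is an isomorphism; this is precisely the assertion, already used in \ref{sub:5}, that $\mathcal U(\h_0)Z_j$ is the underlying Harish--Chandra module of the square-integrable lowest-weight representation of $H_0$ attached to $Z_j$. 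Injectivity of that canonical map is guaranteed by \ref{eq:1}, via which $D_1:\mathcal U(\p_{\h_0}^-)\otimes W\to\mathcal U(\h_0)W$ is an $L$-equivariant isomorphism (see \ref{sub:D1iso}); restricting $D_1$ to $\mathcal U(\p_{\h_0}^-)\otimes Z_j$ shows $\mathcal U(\h_0)Z_j\equiv\mathcal U(\p_{\h_0}^-)\otimes Z_j$ and, summing, that $\mathcal U(\h_0)W=\bigoplus_{1\le j\le r}\mathcal U(\h_0)Z_j$ as $(\h_0,L)$-modules.

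The conclusion is then immediate: applying $Hom_L(Z,-)$, which commutes with finite direct sums, gives $Hom_L(Z,\mathcal U(\h_0)W)=\bigoplus_{1\le j\le r}Hom_L(Z,\mathcal U(\h_0)Z_j)=\bigoplus_{1\le j\le r}Hom_L(Z,\mathcal U(\h_0)\otimes_{\mathcal U(\l_\C+\p_{\h_0}^+)}Z_j)$, and one composes with the isomorphism of Theorem~\ref{prop:firstversion}.

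There is no genuine obstacle here; the only points deserving a word of care are that the displayed decomposition of $\mathcal U(\h_0)W$ is forced no matter how $W$ is split into irreducibles, since each summand $\mathcal U(\h_0)Z_j$ is pinned down up to isomorphism by its lowest $L$-type $Z_j$, and that the tensor-product-over-$\mathcal U(\l_\C+\p_{\h_0}^+)$ modules are genuinely the induced lowest-weight modules rather than proper quotients — a fact which, as indicated above, rests on the identification \ref{eq:1} of $V_{K-\mathrm{fin}}$ with $S(\p^-)\otimes W$ and its restriction along $D_1$.
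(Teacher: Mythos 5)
Your proposal is correct and follows essentially the same route as the paper: the paper likewise deduces the theorem from Theorem~\ref{prop:firstversion} together with the decomposition in \ref{sub:5}, obtained by splitting $W=Z_1\oplus\cdots\oplus Z_r$, noting $L_x^\tau w=0$ for $x\in\p_{\h_0}^+$, and identifying each $\mathcal U(\h_0)Z_j$ with the induced module $\mathcal U(\h_0)\otimes_{\mathcal U(\l_\C+\p_{\h_0}^+)}Z_j$ via the isomorphism underlying $D_1$ and \ref{eq:1}. Your extra remark that freeness over $\mathcal U(\p_{\h_0}^-)$ (hence no proper quotient) comes from restricting $D_1$ is just a more explicit rendering of the paper's ``it readily follows'' step.
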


 \smallskip
  We would like to point out that Theorem~\ref{prop:secondver} follows from  facts proven  in Kobayashi-Pevzner \cite[Theorem 2.7]{KP2}, Nakahama \cite{Na}[Lemma 3.4, Theorem 3.6].  Our proof is of algebraic nature. \\
 Owing to our hypothesis $\mathcal D_{\h_0}$ is a subset of $\mathcal D$ and the inclusion is holomorphic,  we may restrict   holomorphic function on $\mathcal D$ and we obtain holomorphic functions on  $\mathcal D_{\h_0}$. Let $r_0 : \mathcal O(\mathcal D,W)\rightarrow \mathcal O(\mathcal D_{\h_0},W)$ denote the restriction map. In \cite{OV} it is shown that $r_0$ maps $L^2$-continuously the space $V_\tau$ into $L_{c_\tau }^2(\mathcal D_{\h_0},W)$.  In \cite[Lemma 3.6]{OV3}  it is
 shown that the kernel of $r_0 $ is equal to the orthogonal to the subspace $\mathrm{Cl}(\mathcal U(\h_0)W)$.  Whence, $r_0 : \mathrm{Cl}(\mathcal U(\h_0)W)\rightarrow  L_{c_\tau }^2(\mathcal D_{\h_0},W)$ is injective.
 Now, $L_{c_\tau }^2(\mathcal D_{\h_0},Z_j))$ contains just once the holomorphic Discrete Series
 of lowest weight $Z_j$. Let's denote by $H_{c_\tau }^2(\mathcal D_{\h_0},Z_j)$ such a subspace. Actually, $H_{c_\tau }^2(\mathcal D_{\h_0},Z_j)$ is the kernel of the $\bar{\partial}$ operator in $L_{c_\tau}^2(\mathcal D_{\h_0},Z_j))$
 As in \cite{OV3}, we define
 $$\mathbf H_{c_\tau}^2(\mathcal D_{\h_0}, W):=\oplus_{1\leq j \leq r} H_{c_\tau}^2(\mathcal D_{\h_0},Z_j)\subset L_{c_\tau}^2(\mathcal D_{\h_0}, W)$$
 Then, from the previous calculations, it readily follows that the image
 of $r_0$ is $\mathbf H_{c_\tau}^2(\mathcal D_{\h_0}, W)$. Finally, we define \begin{equation} \label{eq:r0D} r_0^D : Hom_H(V_\sigma, V_\tau) \rightarrow  Hom_L(Z,\mathbf H_{c_\tau}^2(\mathcal D_{\h_0}, W))  \end{equation}

 by the rule \begin{equation*} r_0^D(T)(z)= r_0(D(K_T(o,\cdot)z)). \end{equation*}
 Obviously,  $r_0^D$ is a linear bijection. The isomorphism $r_0^D$ is our second version of the duality Theorem.

 \subsection{Another equivalence map for $\mathcal L_{W,H}^c\equiv \mathcal U(\h_0)W $ and third version of Duality Theorem} To begin with, we show, we may replace the map $D$ defined in Proposition~\ref{prop:D} by the orthogonal projector $Q$  onto $\mathrm{Cl}(\mathcal U(\h_0)W)$. That is, \begin{prop}\label{prop:qisd} Let $Q :V_\tau \rightarrow V_\tau $ denote the orthogonal projector onto $\mathrm{Cl}(\mathcal U(\h_0)W)$. Then, \\
 \phantom{xxxxxxxxxxxxxx}{\it $Q : \mathcal L_{W,H}^c \rightarrow \mathcal U(\h_0)W$ is a $L$-equivalence.}
 \end{prop}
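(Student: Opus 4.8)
The plan is to show that $Q$ restricted to $\mathcal L_{W,H}^c$ is an $L$-map which is injective, and then to invoke the dimension count already available from Proposition~\ref{prop:D} to conclude it is a bijection onto $\mathcal U(\h_0)W$. That $Q$ is an $L$-map is immediate: the subspace $\mathrm{Cl}(\mathcal U(\h_0)W)$ is $L$-stable (indeed $H$-stable), the Hilbert structure on $V_\tau$ is $L$-invariant, hence the orthogonal projector $Q$ commutes with the $L$-action; and $\mathcal L_{W,H}^c$ is itself $L$-stable by its construction as a sum of lowest $L$-type spaces. So $Q(\mathcal L_{W,H}^c)\subseteq \mathcal U(\h_0)W$ (note $Q$ lands in $\mathrm{Cl}(\mathcal U(\h_0)W)$, but since $\mathcal L_{W,H}^c$ consists of $K$-finite vectors and $Q$ commutes with $K$, the image is $K$-finite, hence lies in the algebraic module $\mathcal U(\h_0)W$).

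The heart of the matter is injectivity of $Q|_{\mathcal L_{W,H}^c}$, equivalently that $\mathcal L_{W,H}^c \cap \mathrm{Cl}(\mathcal U(\h_0)W)^\perp = \{0\}$. First I would use the identification $\mathcal L_{W,H}^c = \mathcal P(\p_{\h_0}^+,W)$ established just above, together with $\ker r_0 = \mathrm{Cl}(\mathcal U(\h_0)W)^\perp$ from \cite[Lemma 3.6]{OV3}. So the claim reduces to: the restriction map $r_0:\mathcal O(\mathcal D,W)\to\mathcal O(\mathcal D_{\h_0},W)$ is injective on $\mathcal P(\p_{\h_0}^+,W)$. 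But this is essentially tautological — a polynomial in the variables $\p_{\h_0}^+$ (with values in $W$, identifying $W$ with constants) whose restriction to $\mathcal D_{\h_0}\subset\p_{\h_0}^+$ vanishes must be the zero polynomial, since $\mathcal D_{\h_0}$ is a nonempty open subset of $\p_{\h_0}^+$. Hence $Q|_{\mathcal L_{W,H}^c}$ is injective.

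Finally, combining injectivity with the equality of dimensions
$$\dim \mathcal L_{W,H}^c = \dim \mathcal U(\h_0)W$$
supplied by the isomorphism $D$ of Proposition~\ref{prop:D} (both are finite-dimensional, being $L$-finite in a single $K$-type decomposition — more precisely, one uses that $D$ is already a linear isomorphism between them), we conclude $Q:\mathcal L_{W,H}^c\to\mathcal U(\h_0)W$ is a bijection, and being an $L$-map it is an $L$-equivalence.

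I expect the only genuinely delicate point to be the precise justification that $Q(\mathcal L_{W,H}^c)$ lands in the \emph{algebraic} module $\mathcal U(\h_0)W$ rather than merely its closure; this follows from $K$-finiteness being preserved by $Q$, but it deserves an explicit sentence. Everything else is either formal ($L$-equivariance of an orthogonal projector) or the already-cited identifications $\mathcal L_{W,H}^c=\mathcal P(\p_{\h_0}^+,W)$ and $\ker r_0 = \mathrm{Cl}(\mathcal U(\h_0)W)^\perp$.
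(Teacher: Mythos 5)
Your overall skeleton is the same as the paper's: injectivity of $Q$ on $\mathcal L_{W,H}^c=\mathcal P(\p_{\h_0}^+,W)$ via the identity $\ker r_0=\mathrm{Cl}(\mathcal U(\h_0)W)^\perp$ together with the tautological fact that $r_0$ acts as the identity on polynomials in the $\p_{\h_0}^+$-variables, and then surjectivity onto $\mathcal U(\h_0)W$ by comparing with the isomorphism $D$ of Proposition~\ref{prop:D}. However, two of your justifications are wrong as written. First, $Q$ does \emph{not} commute with $K$: the subspace $\mathrm{Cl}(\mathcal U(\h_0)W)$ is only $L$- and $\h_0$-stable, not $K$-stable (were it $K$-stable as well, it would be invariant under the algebra generated by $\k$ and $\h_0$, which in general is all of $\g$, forcing it to be all of $V_\tau$ by irreducibility). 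So your argument that the image is $K$-finite and hence lies in the algebraic module collapses. The correct route — and the one the paper uses — is via $L$-finiteness: elements of $\mathcal L_{W,H}^c$ are $L$-finite, $Q$ is an $L$-map, and $\mathrm{Cl}(\mathcal U(\h_0)W)_{L\text{-fin}}=\mathcal U(\h_0)W$ (a consequence of $L$-admissibility), so $Q(\mathcal L_{W,H}^c)\subseteq \mathcal U(\h_0)W$.

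Second, the surjectivity step ``injective plus equality of dimensions implies bijective'' is not available here, because $\mathcal L_{W,H}^c$ and $\mathcal U(\h_0)W$ are both infinite-dimensional (e.g.\ $\mathcal P(\p_{\h_0}^+,W)$ contains polynomials of all degrees); a mere linear isomorphism $D$ between them gives nothing in that generality. What saves the argument is exactly the extra structure you half-mention but do not use: $Q$ and $D$ are $L$-equivariant, and by $L$-admissibility every $L$-isotypic component of these spaces is finite-dimensional. Since $D$ is an $L$-isomorphism, corresponding isotypic components of $\mathcal L_{W,H}^c$ and $\mathcal U(\h_0)W$ have the same finite dimension, and the injective $L$-map $Q$ restricted to each isotypic component is then bijective onto the corresponding component; summing over $\widehat L$ gives the bijection. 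This isotypic-component argument (invoking $L$-admissibility, Proposition~\ref{prop:D}, and $\mathrm{Cl}(\mathcal U(\h_0)W)_{L\text{-fin}}=\mathcal U(\h_0)W$) is precisely how the paper concludes, so your proposal is repairable, but as stated both the ``$K$-finiteness'' step and the ``dimension count'' step are gaps.
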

 \begin{proof} Indeed, to begin with we recall     $\mathcal P(\p^+,W)\subset V_\tau=\mathcal O(\mathcal D,W)\cap L_{\tau}^2(\mathcal D,W)$. Here, $\mathcal L_{W,H}^c=\mathcal P(\p_{\h_0}^+,W)$. Thus, $r_0(p)=p, \, \forall p \, \in \mathcal P(\p_{\h_0}^+,W)$.  We write $V_\tau\ni p=p_1 +Q(p)$ with $p_1 \in \mathrm{Cl}(\mathcal U(\h_0)W)^\perp, Q(p) \in \mathrm{Cl}(\mathcal U(\h_0)W)$. Since, $Ker(r_0)=\mathrm{Cl}(\mathcal U(\h_0)W)^\perp$ \cite{OV3}, we have
 \begin{equation*} r_0(p)=r_0(Q(p)) \, \forall p \in V_\tau, \end{equation*} hence, $Q$ restricted to $\mathcal L_{W,H}^c=\mathcal P(\p_{\h_0}^+,W)$ is injective. Since $Q$ is a $L$-map, the $L$-admissibility of $V_\tau$, \ref{prop:D} and that $\mathrm{Cl}(\mathcal U(\h_0)W)_{L-fin}= \mathcal U(\h_0)W$ we obtain $Q$ restricted to $\mathcal L_{W,H}^c$  is a bijection onto $ \mathcal U(\h_0)W$. Since the equivalences between the different models are unitary intertwining operators, it readily follows the general statement. \end{proof}

 We believe the statement holds for arbitrary symmetric pairs $(G,H)$ under the hypothesis of $H$-admissibility.

 A consequence of the previous Proposition is an improvement  of the duality Theorem.

 \begin{thm}
 \label{prop:direciso}\phantom{xxxxxxxxxxxxxxx}

  a) For $T_1, T_2 \in Hom_H(V_\sigma, V_\tau)$, we have   $K_{T_1}= K_{T_2}$ if and only if \\ \phantom{xxxxxxxxxxxxxxxxxxx} $K_{T_1}(e,h_0)z=K_{T_2}(e,h_0)z   \,\,\,\forall z \in Z, h_0\in H_0$.

 b) The map \\  $Hom_H(V_\sigma, V_\tau) \ni T $

  $ \mapsto (Z\ni z \mapsto  r_0(K_T(e,\cdot)z)(\cdot) \in \mathbf H^2(H_0,\tau)))\in Hom_L(Z, \mathbf H^2(H_0,\tau)) $ \\ is a bijection.
 \end{thm}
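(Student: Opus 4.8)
The plan is to obtain both parts as essentially formal consequences of the first version of duality (Theorem~\ref{prop:firstversion}), of Proposition~\ref{prop:qisd}, and of the mapping properties of the restriction map $r_0$ recorded earlier; the observation that lets one delete the intertwiner $D$ from the formula is that $r_0$ kills $\mathrm{Cl}(\mathcal U(\h_0)W)^\perp$, so that $r_0=r_0\circ Q$ as maps on $V_\tau$, where $Q$ is the orthogonal projector of Proposition~\ref{prop:qisd}.

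For part (a) the implication ``$\Rightarrow$'' is immediate. For ``$\Leftarrow$'', recall from~\ref{sub:first} that for each $z\in Z$ one has $K_{T_1}(e,\cdot)z,\ K_{T_2}(e,\cdot)z\in\mathcal L_{W,H}^c=\mathcal P(\p_{\h_0}^+,W)$; that is, each $K_{T_i}(e,\cdot)z$ is a $W$-valued polynomial on $\p^+$ depending only on the $\p_{\h_0}^+$-variable. The hypothesis says these two polynomials take equal values at every point of $H_0\cdot o=\mathcal D_{\h_0}$, which is a nonempty open subset of $\p_{\h_0}^+$; hence they agree as polynomials, so $K_{T_1}(e,\cdot)z=K_{T_2}(e,\cdot)z$ in $V_\tau$. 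Since $z\in Z$ is arbitrary, $K_{T_1}(e,\cdot)=K_{T_2}(e,\cdot)$; and the covariance of the holographic kernel (the relation between $K_T(h\cdot o',x)$ and $K_T(o',h^{-1}x)$ through the cocycles $c_\tau,c_\sigma$ used in the proof of Theorem~\ref{prop:firstversion}, together with the transitivity of $H$ on $\mathcal D_\h$, where $o'$ is the base point of $\mathcal D_\h$) then forces $K_{T_1}=K_{T_2}$ on all of $\mathcal D_\h\times\mathcal D$.

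For part (b) I will display the map in the statement as a composite of three bijections. First, Theorem~\ref{prop:firstversion} combined with the fact that $D\colon\mathcal L_{W,H}^c\to\mathcal U(\h_0)W$ is an $L$-isomorphism shows, after composing with $D^{-1}$, that $T\mapsto\bigl(z\mapsto K_T(e,\cdot)z\bigr)$ is a bijection of $Hom_H(V_\sigma,V_\tau)$ onto $Hom_L(Z,\mathcal L_{W,H}^c)$. Second, post-composition with $Q$ is a bijection of $Hom_L(Z,\mathcal L_{W,H}^c)$ onto $Hom_L(Z,\mathcal U(\h_0)W)$, by Proposition~\ref{prop:qisd}. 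Third, $r_0$ restricts to an $L$-equivalence of $\mathcal U(\h_0)W$ onto $\mathbf H^2(H_0,\tau)_{L\text{-}\mathrm{fin}}$ (as recorded in~\ref{sub:DualityinG}(d), and in the present holomorphic model following \cite[Lemma 3.6]{OV3}, where it is shown that $\ker r_0=\mathrm{Cl}(\mathcal U(\h_0)W)^\perp$ and that the image of $r_0$ is $\mathbf H^2(H_0,\tau)$); moreover, since $Z$ is finite dimensional, every $L$-map out of $Z$ has image contained in $\mathbf H^2(H_0,\tau)_{L\text{-}\mathrm{fin}}$, so $Hom_L(Z,\mathbf H^2(H_0,\tau)_{L\text{-}\mathrm{fin}})=Hom_L(Z,\mathbf H^2(H_0,\tau))$, and post-composition with $r_0$ is a bijection onto the latter. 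The composite of these three bijections sends $T$ to $\bigl(z\mapsto r_0(Q(K_T(e,\cdot)z))\bigr)$; and because $r_0=r_0\circ Q$ on $V_\tau$ (proof of Proposition~\ref{prop:qisd}) and $K_T(e,\cdot)z\in\mathcal L_{W,H}^c\subset V_\tau$, this equals $\bigl(z\mapsto r_0(K_T(e,\cdot)z)\bigr)$, which is precisely the map in the statement.

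I do not expect a genuine obstacle here: once Theorem~\ref{prop:firstversion}, Proposition~\ref{prop:qisd}, and the injectivity and surjectivity of $r_0$ on $\mathcal U(\h_0)W$ are in hand, the proof is bookkeeping. The two points that require some care are the interpretation of the evaluation $K_{T_i}(e,h_0)$ (here $e$ denotes the base point of $\mathcal D_\h$ and $h_0$ the point $h_0\cdot o\in\mathcal D_{\h_0}\subset\mathcal D$), and the verification that the image of the map really lies in $Hom_L(Z,\mathbf H^2(H_0,\tau))$ rather than in a strictly larger space of $L$-intertwiners — both dispatched above, the second by finite-dimensionality of $Z$.
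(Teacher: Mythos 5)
Your proposal is correct and follows essentially the paper's own route: part (a) is exactly the argument of \ref{sec:r0injec} (elements of $\mathcal L_{W,H}^c=\mathcal P(\p_{\h_0}^+,W)$ are determined by their restriction to the open set $\mathcal D_{\h_0}$, and the kernel covariance recovers $K_T$ from $K_T(o,\cdot)$), and part (b) rests on the same three ingredients the paper uses, namely Theorem~\ref{prop:firstversion}, Proposition~\ref{prop:qisd}, and the identity $r_0=r_0\circ Q$ coming from $\ker r_0=\mathrm{Cl}(\mathcal U(\h_0)W)^\perp$. The only cosmetic difference is that the paper phrases (b) as injectivity plus equality of dimensions, while you factor the map into three explicit bijections built from those same facts.
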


Actually, $a)$ can be extracted from  Nakahama in \cite[Lemma 3.4]{Na}. Statement $b)$ can be deduced from    \cite[Lemma 3.4]{Na} or the work of  Kobayashi \cite[Proof of Lemma 8.8]{Kob3}. Their proofs apply quite different techniques.

 In \cite{OV3} we have computed the orthogonal projector $Q$ in terms of the operator $r_0^\star r_0$, for example, when $\tau$ restricted to $L$ is irreducible, $Q$ is up to a constant the operator $r_0^\star r_0$, whence, applying the results in \cite{OV3} we may compute the inverse to the map defined in part b) of the Theorem. In general, under the hypothesis of $H$-admissibility, in \cite[Theorem 3.1]{OV3} it is shown that $Im(r_0)$ is a closed subspace of $L^2(H\times_\tau W)$, actually, it is a finite sum or irreducible subrepresentations, and that  $r_0r_0^\star $ is a  bijective linear operator  for $Im(r_0)$. It can be checked that $Q=r_0^\star (r_0 r_0^\star)^{-1} r_0$.

 \smallskip
 Due to the equality $r_0 =r_0 Q$,   we obtain a simpler description of inverse map in Theorem~\ref{prop:direciso} b). In fact,
 formula \ref{sub:DualityinG} simplifies to \begin{equation*}  K_T(h,x)z      =  \big(Q^{-1}\big[\int_{H_0} K_\tau (h_0,\cdot)      (C(r_0 (K_T(e,\cdot)z))(\cdot))(h_0)dh_0 \big]\big)(h^{-1}x).
  \end{equation*} Here, $C:=(r_0 r_0^\star)^{-1} $ is a bijective endomorphism  of $\mathbf{H}^2(H_0,\tau)$.
 \bigskip

 \subsection{Some versions of the inversion formula} In Theorem~\ref{prop:direciso} we have presented a new proof that the map $$ Hom_H(V_\sigma, V_\tau) \ni T \mapsto (z\rightarrow r_0(K_T(e,\cdot)z)(\cdot))\in Hom_L(Z,\mathbf H^2(H_0\times_\tau W)).$$ is bijective. Actually, we have   shown injectivity and that both spaces are equidimensional. In this manner, we have obtained the surjectivity of the map.  Nakahama \cite{Na}(see Proposition~\ref{prop:nakahama}) have written an explicit   inverse  map based on the reproducing kernel of the target space $V_\tau$ and the realization of holomorphic Discrete Series in spaces of functions on the bounded symmetric domain attached to the group $G$.  In the next paragraph we  present Nakahama's formula, and,  in \ref{prop:kernforhol}, we show another  expression for the inverse map, based on the reproducing kernel of the initial factor $V_\sigma$ that we are considering  and a realization of the Discrete Series representation on space of functions on $G$.

 \subsubsection{Nakahama's formula}   The hypothesis is: both representations are realized in a space of holomorphic functions. Thus,   $V_\tau=\mathcal O(\mathcal D,W)\cap L_\tau^2(\mathcal D, W)$ is a $H$-admissible holomorphic
 Discrete Series representation for $G$. $V_\sigma$ is a holomorphic Discrete Series for $H$ realized on a space of holomorphic functions on $\mathcal D_\h$. We also assume the inclusion $\mathcal D_\h \equiv H/L \rightarrow G/K \equiv\mathcal D $ is holomorphic map. Whence, we may consider  the triangular decomposition,\\
   $P_+K_\C P_- \ni  x=exp(x_+) \,(x)_0 \, exp(x_-), x_\pm \in Lie(P_\pm) =\p^\pm$, $(x)_0 \in K_\C$.\\ For $g \in G, x\in \mathcal D,\, g\cdot x := (g\, exp(x))_+$. Since $H$ is the fixed point set of an involution that commutes with the Cartan involution determinate by $K$, we have that the triangular decomposition $P_+K_\C P_- $ is invariant under the involution that defines $H$.  Therefore, $P_+K_\C P_- \cap H=: P_+^H L_\C P_-^H$ and the triangular decomposition of $h\in H$ is equal to the triangular decomposition of $h$ thought in $G$. This also holds for the action of $H$ in $\mathcal D_\h \subset \mathcal D$.  We consider $T :V_\sigma \rightarrow V_\tau$ a continuous $H$-map, and its kernel $K_T : \mathcal D_\h \times \mathcal D \rightarrow Hom_\C(Z,W)$. Hence, it holds the equality \begin{equation}\label{eq:forK_T}  K_T^c(h\cdot w,h \cdot  z)=c_\tau (h,z)K_T^c (w,z) c_\sigma (h,w)^\star, \, \forall h \in H, w \in \mathcal D_\h, z\in \mathcal D, \end{equation}

 Here, owing to our hypothesis we have,

 $c_\tau (g,z)=\tau ( (g exp z)_0), c_\sigma (h,w)=\sigma( (h exp w)_0), l\cdot z=Ad(l)z.$

 Hence, for $l \in L, z\in \mathcal D \subset \p^+ $, we obtain
 \begin{equation} \label{eq:lk_t}   K_T^c(o, Ad(l)(z))=K_T^c( l \cdot o,  l \cdot z)=   \tau (l) K_T^c(o,z)\sigma (l^{-1}).   \end{equation} That is $K_T(o, \cdot) \in Hom_L(Z, \mathcal O(\mathcal D)\otimes W)$.

  Writing $w=h_w\cdot o, h_w\in H$, we obtain the equality
 \begin{equation}  K_T^c(w ,z)=c_\tau (h_w, (h_w)^{-1}\cdot z)K_T^c (o,(h_w)^{-1}\cdot z) c_\sigma (h_w,o)^\star. \end{equation}

 Therefore, the kernel $K_T^c$ is determined by the functions
 \begin{equation} \{ z\mapsto K_T^c(o,\cdot)z, z\in Z\}. \end{equation}

 Actually, this can improved.  For this we recall the orthogonal decomposition   $\p_\g^+ =\p_{\h_o}^+ \oplus \p_{\h}^+ $ and  denote by $P_2$  the projector onto $\p_{\h_0}^+$ along $\p_{\h}^+$.

  \subsubsection{}\label{sec:r0injec} Let, as usual, be  $r_0^c : V_\tau \rightarrow L_{c_\sigma}^2 (\mathcal D_{\h_0},W)$ the restriction map, it is obvious that $r_0^c : \mathcal L_{W,H}^c =\mathcal P(\p_{h_0}^+,W) \rightarrow  \mathcal P(\p_{h_0}^+,W)\subset L_{\sigma}^2 (\mathcal D_{\h_0},W)$ is the "identity" map. In other words, $r_0^c$ restricted to $ \mathcal L_{W,H}^c $ is injective. Whence, we obtain a new proof of Theorem~\ref{prop:direciso} a), namely,  the kernel $K_T^c$ is determined by the function

   \phantom{xxxxxx} $Z\ni z \mapsto r_0^c(K_T^c(o,\cdot)z)  \in  \mathcal P(\p_{h_0}^+,W)\subset  L_{\sigma}^2 (\mathcal D_{\h_0},W) $.

  Equivalently, the following equality hods $$ K_T^c(o,\cdot )=K_T^c(o,P_2(\cdot ) ).$$

 Let $x\mapsto \bar x$ denote the conjugation associated to the pair $(\g_\C, \g)$. Then, the conjugation carries $\p^+$ onto $\p^-$.

 In \cite[Proposition 3.3]{Na} we find a proof of,
 \begin{prop} [Nakahama's formula]\label{prop:nakahama} Let $T: V_\sigma \rightarrow V_\tau$ a continuous $H$-map. Then,  for $  w\in \mathcal D_\h, z\in \mathcal D$,  \begin{equation}\label{eq:naka}  K_T^c(w ,z)=K_\tau^c (w, z)\,\,  K_T^c (o, P_2 \bigl(\big(exp(-\bar w)exp(z)\big)_+\bigl)). \end{equation}
 \end{prop}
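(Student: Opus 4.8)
The plan is to exploit the $H$-covariance of $K_T^c$ together with the explicit formula for the reproducing kernel $K_\tau^c$ and the structure of the triangular decomposition in $P_+K_\C P_-$. First I would recall the two key transformation laws already established: the covariance $K_T^c(h\cdot w, h\cdot z) = c_\tau(h,z) K_T^c(w,z) c_\sigma(h,w)^\star$ from \eqref{eq:forK_T}, and the fact that $K_T^c(o,\cdot) = K_T^c(o, P_2(\cdot))$ is determined by its restriction to $\p_{\h_0}^+$, shown in \ref{sec:r0injec}. The strategy is to write an arbitrary $w\in\mathcal D_\h$ as $w = h_w\cdot o$ for a suitable $h_w\in H$, and to transport the evaluation $K_T^c(w,z)$ back to $K_T^c(o, h_w^{-1}\cdot z)$ via the covariance law, obtaining
\begin{equation*}
K_T^c(w,z) = c_\tau(h_w, h_w^{-1}\cdot z)\, K_T^c(o, h_w^{-1}\cdot z)\, c_\sigma(h_w, o)^\star.
\end{equation*}
Then $h_w^{-1}\cdot z = (h_w^{-1}\exp(z))_+$, and I would apply $P_2$ to this argument (legitimate by \ref{sec:r0injec}), so the remaining task is purely to identify the cocycle prefactor $c_\tau(h_w, h_w^{-1}\cdot z)\,c_\sigma(h_w,o)^\star$ with $K_\tau^c(w,z)$ and to rewrite $P_2(h_w^{-1}\cdot z)$ in the intrinsic form $P_2((\exp(-\bar w)\exp(z))_+)$.

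The main computation is the identification of the scalar (operator) prefactor. Here I would use $c_\tau(g,z) = \tau((g\exp z)_0)$ and $K_\tau^c(w,z) = \tau(((\exp\bar w)^{-1}\exp z)_0)^{-1}$, together with the multiplicativity of the $K_\C$-component under the triangular decomposition and the cocycle identity $c(ab,x) = c(a,bx)c(b,x)$. Choosing $h_w$ so that $h_w\cdot o = w$ means $(h_w)_+ = w$ and, crucially, since $H$ is the fixed-point group of an involution commuting with $\theta$, one has a compatible triangular decomposition on $H$, so $h_w^{-1}$ has $P_-^H$-component governed by $\exp(-\bar w)$ (using that $w\in\mathcal D_\h$ and $\bar w\in\p_\h^-$). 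Unwinding $(h_w^{-1}\exp z)_0$ and $(h_w\exp o)_0 = (h_w)_0$ and matching against $((\exp\bar w)^{-1}\exp z)_0$ should produce exactly $K_\tau^c(w,z)$; the conjugation $x\mapsto\bar x$ enters because $c_\sigma(h_w,o)^\star$ contributes $\sigma((h_w)_0)^\star$, which combines with the $\p^-$-part coming from $h_w^{-1}$. The appearance of $\exp(-\bar w)$ rather than $\exp(\bar w)$ is dictated by the inverse $h_w^{-1}$ and the identification of $(h_w)_-$ with (a conjugate of) $w$.

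I expect the main obstacle to be the bookkeeping in the triangular decomposition: verifying that for $h_w\in H$ with $(h_w)_+ = w$ one can factor $h_w^{-1}\exp(z)$ so that its $P_+$-component is precisely $(\exp(-\bar w)\exp(z))_+$ up to the action of the $K_\C$- and $P_-$-parts, and that these latter parts assemble into $K_\tau^c(w,z)$ after applying $P_2$. This is where the holomorphy of the embedding $H/L\to G/K$ and the $\sigma\theta$-compatibility are used essentially. Once the prefactor is identified and $P_2$ is inserted using \ref{sec:r0injec}, the formula \eqref{eq:naka} follows; I would close by remarking that both sides are anti-holomorphic in $w$ and holomorphic in $z$, so it suffices to check the identity on a dense set or at the level of the defining data, which the above argument does. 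Alternatively, a cleaner route avoids choosing $h_w$ explicitly: one observes that the right-hand side of \eqref{eq:naka}, call it $\widetilde K(w,z)$, satisfies the same covariance law \eqref{eq:forK_T} (using the cocycle identity $K_\tau^c(h\cdot w, h\cdot z) = c_\tau(h,z)K_\tau^c(w,z)c_\tau(h,w)^\star$ and the compatibility of $P_2$ with the $L$-action) and agrees with $K_T^c$ along $w = o$ by \ref{sec:r0injec}; since every such kernel is determined by its values at $w=o$ (again \ref{sec:r0injec} and the $H$-covariance), $\widetilde K = K_T^c$.
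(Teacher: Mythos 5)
Your scaffolding coincides with what the paper records immediately before the statement, but be aware that the paper does not actually prove Proposition~\ref{prop:nakahama}: it quotes it from Nakahama \cite[Proposition 3.3]{Na}, and the two reductions you start from — the covariance \eqref{eq:forK_T} applied with $w=h_w\cdot o$, and the fact that $K_T^c(o,\cdot)=K_T^c(o,P_2(\cdot))$ is determined by its restriction to $\p_{\h_0}^+$ (\ref{sec:r0injec}) — are exactly the preparatory observations the authors display before citing Nakahama. So what you have reproduced is the set-up, not the proof; the authors explicitly say that even a Jordan-algebra-free version of the remaining argument is ``quite long'' and rests on structural facts taken from \cite[Lemma XII.1.8]{Neeb2} and \cite[Chapter II, \S 5]{Sa} (replacing \cite{FK}).

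That remaining argument is precisely where your plan has a genuine gap, in both routes. In the direct route: writing $h_w=\exp(w)\,(h_w)_0\,\exp(\bar w')$, one gets $h_w^{-1}\cdot z=\bigl(\exp(-\bar w')\exp\bigl(Ad((h_w)_0^{-1})(z-w)\bigr)\bigr)_+$, which is \emph{not} $(\exp(-\bar w)\exp z)_+$; the two differ by the shift by $-w$ and an $Ad$-twist by the $K_\C$-component of $h_w$, which lies in $L_\C$, not $L$. Moreover the factor $c_\sigma(h_w,o)^\star$ acts on the $Z$-side, so it cannot ``combine with the $\p^-$-part'' into $K_\tau^c(w,z)$ (which acts on $W$); the only way it disappears from \eqref{eq:naka} is by pushing it through $K_T^c(o,\cdot)$ using the equivariance \eqref{eq:lk_t} extended holomorphically from $L$ to $L_\C$ (legitimate since $K_T^c(o,\cdot)$ is polynomial) together with $P_2\circ Ad(l)=Ad(l)\circ P_2$ for $l\in L_\C$ — none of which appears in your sketch. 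In the ``cleaner'' route, checking that the right-hand side of \eqref{eq:naka} satisfies \eqref{eq:forK_T} requires exactly the identity $\bigl(\exp(-\overline{h\cdot w})\exp(h\cdot z)\bigr)_+=Ad\bigl(((h\exp w)_0^\star)^{-1}\bigr)\bigl((\exp(-\bar w)\exp z)_+\bigr)$ for $h\in H$, i.e.\ the addition/transformation formula for the $P_+$-component (the quasi-inverse identity in Jordan language); this is the decisive lemma, it is neither stated nor proved in your proposal, and it is the very point the paper outsources to Nakahama, Neeb and Satake. So: right framework and correct reductions, but the core of the proof — the structure-theoretic identity and the $L_\C$-extension of the equivariance needed to absorb the cocycle factors — is missing rather than merely ``bookkeeping''.
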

  Thus, Nakahama  provides an explicit inversion formula to the map in Theorem~\ref{prop:direciso} b).

 {\it Note.} Nakahama's  statement as well as his proof is    written  in   the language of Jordan algebras. We are able to obtain  a quite long proof of the Proposition without the terminology of Jordan algebras, however, it is just a copy, without Jordan algebras language, of the techniques of Nakahama. Our proof replaces,   the result Nakahama needs from \cite[Part V, Thorem III.5.1(ii)]{FK}, by    facts  in \cite[Lemma XII.1.8]{Neeb2} \cite[Chapter II, $\S$ 5]{Sa}.

  \subsubsection{} Nakahama´s formula for the kernel of a holographic operator together with Theorem~\ref{prop:propertiesksc} let us compute the kernel of a  symmetry breaking operator. The final result is:
 \begin{prop} We assume $H/L \rightarrow G/K$ is a holomorphic embedding, and $V_\tau=\mathcal O(\mathcal D,W)\cap L_\tau^2(\mathcal D, W)$ (resp. $V_\sigma=\mathcal O(\mathcal D_\h,Z)\cap L_\tau^2(\mathcal D_\h, Z)$) is a holomorphic Discrete Series representation of $G$ (resp.is a holomorphic representation for $H$). Then, for each $S \in Hom_H(V_\tau, V_\sigma)$, and the function $\Phi_S=K_{S^\star}^c(o,\cdot) \in  Hom_L(Z, \mathcal P(\p_{\h_0}^+,W))= (\mathcal P(\p_{\h_0}^+)\otimes Hom_\C (Z,W))^{(L_\cdot \otimes \sigma^\star \otimes \tau) (L)}  $  we have \begin{multline*} K_S^c(z,w)(\cdot)\\ = \big(\Phi_S(P_2 \bigl((exp(-\bar w)exp(z)\big)_+\bigl)\big)^\star (K_\tau^c (z,w)(\cdot) )   , z\in \mathcal D, w \in \mathcal D_\h. \end{multline*} Actually, for each    $\Phi  \in  Hom_L(Z, \mathcal P(\p_{\h_0}^+,W)) $, there is a unique symmetry breaking operator $S$ so that $\Phi_S=\Phi$.
 \end{prop}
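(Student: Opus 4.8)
The plan is to deduce the formula for $K_S^c$ by combining Nakahama's formula (Proposition~\ref{prop:nakahama}) with the elementary passage from a symmetry breaking operator to its adjoint, which is a holographic operator. First I would note that for $S\in Hom_H(V_\tau,V_\sigma)$ the Hilbert adjoint $T:=S^\star$ lies in $Hom_H(V_\sigma,V_\tau)$, and that item $(2)$ of Proposition~\ref{prop:propertiesksc} gives, for $z\in\mathcal D$ and $w\in\mathcal D_\h$, the identity $K_S^c(z,w)=K_{S^\star}^c(w,z)^\star$ inside $Hom_\C(W,Z)$. Nakahama's formula applied to $T=S^\star$ reads
\[
K_{S^\star}^c(w,z)=K_\tau^c(w,z)\,K_{S^\star}^c\bigl(o,P_2\bigl((exp(-\bar w)exp(z))_+\bigr)\bigr),
\]
where I use that $w\in\mathcal D_\h\subset\mathcal D$, so $K_\tau^c(w,z)\in Hom_\C(W,W)$ is defined. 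Taking adjoints, applying $(AB)^\star=B^\star A^\star$, the Hermitian symmetry $K_\tau^c(w,z)^\star=K_\tau^c(z,w)$ of the reproducing kernel (a consequence of $K_\tau(x,y)=K_\tau(y,x)^\star$ and \eqref{eq:ktaucktau}), and the abbreviation $\Phi_S=K_{S^\star}^c(o,\cdot)$, yields precisely the asserted expression for $K_S^c(z,w)$. The only real care needed here is the order of composition after taking adjoints.

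For the parametrization statement, the point is that $S\mapsto\Phi_S$ is just the duality isomorphism of Theorem~\ref{prop:firstversion} transported through adjoints. I would first recall, from the facts collected before Theorem~\ref{prop:firstversion}, that for a holographic operator $T$ the map $z\mapsto K_T^c(o,\cdot)z$ is an $L$-map with image in $V_\tau[V_\sigma][Z]\subset\mathcal L_{W,H}^c$, and that in the holomorphic situation $\mathcal L_{W,H}^c=\mathcal P(\p_{\h_0}^+,W)$ by the identification established in \ref{sub:first}; hence $\Phi_S=K_{S^\star}^c(o,\cdot)$ does belong to $Hom_L(Z,\mathcal P(\p_{\h_0}^+,W))$. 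Now Theorem~\ref{prop:firstversion} asserts that $T\mapsto\bigl(z\mapsto D(K_T^c(o,\cdot)z)\bigr)$ is a bijection of $Hom_H(V_\sigma,V_\tau)$ onto $Hom_L(Z,\mathcal U(\h_0)W)$, and Proposition~\ref{prop:D} says $D$ restricts to an $L$-equivalence $\mathcal L_{W,H}^c\to\mathcal U(\h_0)W$; composing with $D^{-1}$ shows that $\beta:T\mapsto(z\mapsto K_T^c(o,\cdot)z)$ is a bijection of $Hom_H(V_\sigma,V_\tau)$ onto $Hom_L(Z,\mathcal P(\p_{\h_0}^+,W))$. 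Since $S\mapsto S^\star$ is a bijection of $Hom_H(V_\tau,V_\sigma)$ onto $Hom_H(V_\sigma,V_\tau)$, with inverse again given by the adjoint, the composite $S\mapsto\Phi_S=\beta(S^\star)$ is a bijection, which is exactly the claimed existence and uniqueness. Finally, because a continuous intertwining operator is determined by its kernel, the first part of the Proposition recovers $K_S^c$, and hence $S$, from $\Phi$.

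I do not anticipate a genuine obstacle here: essentially all of the analysis is already encapsulated in Nakahama's formula (Proposition~\ref{prop:nakahama}), which we are free to assume, and in the holomorphic duality theorem of \ref{sub:first}; what remains is bookkeeping with adjoints and delimiters. The one point worth double-checking is that evaluating $\Phi_S$ at $P_2((exp(-\bar w)exp(z))_+)$ is legitimate, i.e. that this argument lies in $\p_{\h_0}^+$, which is immediate from the definition of $P_2$ and is in the same spirit as the identity $K_T^c(o,\cdot)=K_T^c(o,P_2(\cdot))$ recorded in \ref{sec:r0injec} (reflecting that $r_0^c$ is injective on $\mathcal L_{W,H}^c$). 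Should one wish to avoid citing Nakahama's formula, the alternative --- and, I expect, considerably more laborious --- route would be to re-derive the holographic kernel formula directly from the explicit action of $\g$ on $\mathcal O(\mathcal D,W)$ recalled in \ref{sub:jv}; I would not take that path here.
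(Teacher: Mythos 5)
Your proposal is correct and follows essentially the route the paper itself indicates: the kernel identity is obtained by applying Nakahama's formula (Proposition~\ref{prop:nakahama}) to the holographic operator $T=S^\star$ and passing to adjoints via Proposition~\ref{prop:propertiesksc}, using $K_\tau^c(w,z)^\star=K_\tau^c(z,w)$. Your treatment of the existence/uniqueness claim via the duality Theorem~\ref{prop:firstversion} composed with $D^{-1}$ (Proposition~\ref{prop:D}) is exactly the mechanism the paper relies on, so there is nothing essentially different to compare.
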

 When we write $\Phi_S(v)(z_0)=\sum_r C_r(v) z_0^r, v\in Z, C_r\in Hom_\C(Z,W)$, $z_0\in \p_{\h_0}^+$, then, $\big(\Phi_S(z)\big)^\star=\sum_r C_r^\star \,\bar{z_0}^r$.
 \subsubsection{Nakahama's formula in $H^2$-model} The hypothesis $H/L\rightarrow G/K$ is a holomorphic embedding is in force, $H^2(H,\sigma), H^2(G,\tau)$ are respective holomorphic Discrete Series representations. \begin{prop} Let $T :H^2(H,\sigma)\rightarrow  H^2(G,\tau)$ be a holographic operator. Then, \begin{equation*} K_T(h,x)=     K_\tau (h, x) \,  \, K_T(e,  (h)_0^{\star }\, (g_a \,(g_a)_0^{-1})\,  (h)_0^{\star-1}     ).  \end{equation*} \end{prop}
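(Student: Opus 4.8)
The plan is to obtain this identity by transporting Nakahama's formula (Proposition~\ref{prop:nakahama}, \eqref{eq:naka}) from the bounded-domain realization to the group realizations $H^2(H,\sigma)$, $H^2(G,\tau)$ by means of the transfer relations \eqref{eq:ttc} and \eqref{eq:ktaucktau}. Recall that under $g\mapsto(g)_+=g\cdot o$ a coset $gK$ (resp.\ $gL$) corresponds to $(g)_+\in\mathcal D$ (resp.\ $\in\mathcal D_\h$), that $c_\tau(g,e)=\tau((g)_0)$, $c_\sigma(g,e)=\sigma((g)_0)$ for the triangular decomposition $g=\exp((g)_+)(g)_0\exp((g)_-)\in P_+K_\C P_-$, and that for $h\in H$ this decomposition in $H$ agrees with that in $G$, so $(h)_0\in L_\C$. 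Fix $h\in H$, $x\in G$, set $w:=(h)_+$, $z:=(x)_+$, and let $g_a$ (in the notation of the present section) be an element with $(g_a)_+=(\exp(-\bar w)\exp z)_+$, the point occurring in \eqref{eq:naka} before the projection $P_2$; then $g_a(g_a)_0^{-1}$ has $P_+$-part $(\exp(-\bar w)\exp z)_+$ and $K_\C$-part $e$.

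First I would rewrite the three kernels of \eqref{eq:naka} group-theoretically. By \eqref{eq:ttc} ($w\leftrightarrow hL$, $z\leftrightarrow xK$), $K_{T^c}^c(w,z)=\tau((x)_0)K_T(h,x)\sigma((h)_0)^\star$; by \eqref{eq:ktaucktau} (viewing $w$ as the point $hK$ of $\mathcal D$), $K_\tau^c(w,z)=\tau((x)_0)K_\tau(h,x)\tau((h)_0)^\star$; and, representing a domain point $v$ by any $g_v$ with $(g_v)_+=v$, \eqref{eq:ttc} with $o\leftrightarrow eL$ gives $K_{T^c}^c(o,v)=\tau((g_v)_0)K_T(e,g_v)$. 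Substituting these into \eqref{eq:naka} and cancelling the invertible left factor $\tau((x)_0)$ leaves, with $v=P_2((\exp(-\bar w)\exp z)_+)$,
\[
 K_T(h,x)\,\sigma((h)_0)^\star=K_\tau(h,x)\,\tau((h)_0)^\star\,\tau((g_v)_0)\,K_T(e,g_v).
\]
Because $K_T(e,\cdot)z$ lies in $\mathcal L_{W,H}^c=\mathcal P(\p_{\h_0}^+,W)$ — equivalently, $K_T^c(o,\cdot)=K_T^c(o,P_2(\cdot))$ as in Section~\ref{sec:r0injec} — the representative of $v$ may be taken to be $g_a(g_a)_0^{-1}$, so that $\tau((g_v)_0)K_T(e,g_v)=K_T(e,g_a(g_a)_0^{-1})$. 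Then, applying the holomorphic extension of \eqref{eq:lk_t}, namely $\tau(l)K_T(e,g)=K_T(e,lgl^{-1})\sigma(l)$ for $l\in L_\C$, with $l=(h)_0^\star$ (using $\tau((h)_0^\star)=\tau((h)_0)^\star$ and $\sigma((h)_0^\star)=\sigma((h)_0)^\star$), the right-hand side of the displayed equation becomes $K_\tau(h,x)\,K_T\bigl(e,(h)_0^\star(g_a(g_a)_0^{-1})(h)_0^{\star-1}\bigr)\,\sigma((h)_0)^\star$; cancelling the invertible factor $\sigma((h)_0)^\star$ on the right yields the asserted formula.

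The step I expect to be the main obstacle is the two simplifications in the last paragraph: one must check with care that $K_T(e,\cdot)z$ factors through $P_2((\cdot)_+)$ in the precise sense used — this rests on $\mathcal L_{W,H}^c=\mathcal P(\p_{\h_0}^+,W)$, on the description \eqref{eq:1} of $V_{K-fin}$, and on the injectivity of $r_0^c$ on $\mathcal L_{W,H}^c$ recorded in Section~\ref{sec:r0injec} — and that the conjugation $x\mapsto\bar x$, which carries $\p_\h^+$ onto $\p_\h^-$ and $\p_{\h_0}^+$ onto $\p_{\h_0}^-$, interacts with $\overline{(h)_+}$ and the projector $P_2$ as needed, that $(h)_0\in L_\C$ for $h\in H$ so that $\sigma((h)_0)$ and the conjugation by $(h)_0^\star$ are meaningful, and that \eqref{eq:lk_t} together with the right $K_\C$-equivariance of the holomorphic sections $K_T(e,\cdot)z$ indeed extends holomorphically from $L$ to $L_\C$. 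The substitutions from \eqref{eq:ttc}, \eqref{eq:ktaucktau} and the cancellations are by contrast routine once Proposition~\ref{prop:nakahama} is in hand; an essentially equivalent but longer alternative would be to rerun the proof of Proposition~\ref{prop:nakahama} itself directly in the group model.
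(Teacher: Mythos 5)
Your proof follows essentially the same route as the paper's: transfer Nakahama's formula \eqref{eq:naka} to the group model via \eqref{eq:ttc} and \eqref{eq:ktaucktau}, then absorb the factors $\tau((h)_0)^\star$ and $\sigma((h)_0)^{\star-1}$ into the argument of $K_T(e,\cdot)$ by the holomorphically extended covariance $K_T(e,lxk)=\tau(k^{-1})K_T(e,x)\sigma(l^{-1})$; the paper does exactly this, first for $h=g_w$, $x=g_z$ and then passing to general $h,x$ by right $L\times K$-covariance, a step your direct use of $w=(h)_+$, $z=(x)_+$ renders unnecessary. Two caveats. First, in the paper $g_a$ is the element of $\exp(\p\cap\h_0)$ with $g_a\cdot o=P_2\bigl((\exp(-\bar w)\exp z)_+\bigr)$, i.e.\ it represents the \emph{projected} point, not the unprojected one as in your reading; the two versions of the formula take the same value precisely because of the $P_2$-factorization of $K_{T^c}^c(o,\cdot)$ and the fact that $Ad((h)_0^\star)$ commutes with $P_2$, but as written you prove a variant of the stated identity rather than the one the paper intends. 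Second, your intermediate choice of a real representative $g_v$ with $g_v\cdot o=v=P_2\bigl((\exp(-\bar w)\exp z)_+\bigr)$ presupposes that $v$ lies in the bounded domain, which fails for general $w,z$; the paper deals with this by establishing the identity for small $w,z$ and then invoking real analyticity of both sides, and you should either do the same or work with the polynomial (holomorphic) extension of $K_{T^c}^c(o,\cdot)$ from the start, which your next step in effect already does.
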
 \begin{proof} To begin with we recall that for $z\in \mathcal D$,  the unique $g_z \in exp(\p)\cap G$ so that $g_z\cdot o=z$ is $g_z=exp(z)K_\tau^c(z,z)^{1/2} exp(\bar z)$. We note $K_\tau^c(z,z)^{1/2}$ is well defined due to the identity $K_\tau^c(w,z)=\overline{K_\tau^c (z,w)}^{-1}$, and in consequence,   $K_\tau^c(z,z)$ belongs to the $exp(\p)$-part of $K_\C=K  exp(iLie(K))$ so $log(K_\tau^c(z,z))$ is defined.
 \cite[Chap II, $\S$ 5, Exercise 2]{Sa}\cite{Neeb2}.

 \bigskip
  For $w\in \mathcal D_\h, z \in \mathcal D$, we write (at least for small $w,z$ it is true)\\ $P_2((exp(-\bar w)exp(z))_+)=g_a \cdot o$, $a\in \mathcal D_{\h_0}, g_a \in exp(\p \cap \h_0) $. Now, we apply \ref{eq:ttc} and Nakahama's formula \ref{eq:naka} to obtain
 \begin{multline*}K_T(g_w,g_z)=K_\tau (g_w,g_z) \, \tau ((g_w)_0)^\star \tau ((g_a)_0)\, K_T(e, g_a)\, \sigma ((g_w)_0)^{\star-1}\\
  = K_\tau (g_w,g_z) \,  \, K_T(e, ((g_w)_0)^{\star }\, g_a \,  \big(((g_w)_0)^\star    (g_a)_0\big)^{-1})\\  = K_\tau (g_w,g_z) \,  \, K_T(e, ((g_w)_0)^{\star }\, (g_a \,(g_a)_0^{-1})\,  ((g_w)_0)^{\star-1}     ). \end{multline*}
  The second equality is due to $K_T(e,lxk)=\tau(k^{-1})K_T(e,x)\sigma(l^{-1})$.
 Owing to  real analyticity of both sides in the formula, the equality holds for every $w,z$.  Next, we write $h=g_{h\cdot o} \, l, x=g_{x\cdot o}\, k$, and compute
 \begin{multline*}K_T(h,x)=\tau(k^{-1}) K_T(g_{h\cdot o},g_{x\cdot o})\sigma(l) \\ = \tau(k^{-1}) K_\tau (g_{h\cdot o},g_{x\cdot o})) \, \tau ((g_{h\cdot o})_0)^\star \tau ((g_a)_0)\, K_T(e, g_a)\, \sigma ((g_{h\cdot o}))_0)^{\star-1} \sigma(l)\\
  = K_\tau (g_{h\cdot o}l ,g_{x\cdot o}k) \, \sigma(l^{-1}) \, K_T(e, \sigma(l^{-1})((g_{h\cdot o} )_0)^{\star }\, g_a \,  \big(((g_{h\cdot o})_0)^\star    (g_a)_0\big)^{-1})\\ \,  = K_\tau (h, x) \,  \, K_T(e, \sigma(l^{-1})(g_{h\cdot o})_0)^{\star }\, (g_a \,(g_a)_0^{-1})\,  ((g_{h\cdot o}))_0)^{\star-1}\sigma(l)    ) \\ \,  = K_\tau (h, x) \,  \, K_T(e,  (h)_0^{\star }\, (g_a \,(g_a)_0^{-1})\,  (h)_0^{\star-1}     ). \end{multline*}
 \end{proof}
 \noindent
 We note that $((g_w)_0)^{\star }\, (g_a \,(g_a)_0^{-1})\,  ((g_w)_0)^{\star-1}=e^r e^s,\, r\in \p_{\h_0}^+, s\in \p_{\h_0}^-$.

 When both representations are scalar the formula  turns into
 $$K_T(g_w,g_z)=  \tau ((g_w)_0)^\star \tau ((g_a)_0)\,\, \sigma ((g_w)_0)^{\star-1}\, K_\tau (g_w,g_z) \, K_T(e, g_a) $$

 \subsection{A formula for     holographic operator's in the $H_{hol}^2$-model} In Theorem~\ref{prop:direciso} we have presented a new proof that the map $$ Hom_H(V_\sigma, V_\tau) \ni T \mapsto (z\rightarrow r_0(K_T(e,\cdot)z)(\cdot))\in Hom_L(Z,\mathbf H^2(H_0\times_\tau W)).$$ is bijective. Actually, we have   shown injectivity and that both spaces are equidimensional.  Nakahama (see Proposition~\ref{prop:nakahama}) have written an explicit   inverse  map based on the reproducing kernel of the target space $V_\tau$ and the realization of holomorphic Discrete Series in spaces of functions on the bounded symmetric domain attached to the group $G$.  In the next paragraph we will present another  expression for the inverse map, based on the reproducing kernel of the initial factor $V_\sigma$ that we are considering  and a realization of the Discrete Series representation on space of functions on $G$.    In order to present the formula we   recall a fact shown by Bailey-Borel and another realization for the holomorphic Discrete Series.  We assume $G/K$ is isomorphic to a bounded symmetric domain $\mathcal D \subset \p^+$. Let $(\tau, W)$ denote the lowest $K$-type of a holomorphic Discrete Series representation realized in $V_\tau =\mathcal O(\mathcal D,W)\cap L_\tau^2(\mathcal D,W)$. Then, we have triangular decomposition $G\subset P^+K_\C P^-$, $x=exp(x_+)(x)_0 exp(x_-)$,  the cocycle  $c_\tau (x,w)=\tau ((xexp(w))_0)$, $x\in G, w\in \mathcal D$, the reproducing kernel for $V_\tau$ is $K_\tau(w,z)=\tau ((exp(-\bar w)exp(z))_0^{-1})$. We also have the  linear isomorphism $E_{c_\tau} : C^\infty (G\times_\tau W)\rightarrow C^\infty (\mathcal D,W)$ defined by $E_{c_\tau} (f)(xK)=c_\tau(x,o)f(x)$ (see \ref{eq:difop}). Then, Bailey-Borel \cite{BB} have shown the pre-image of the space of holomorphic functions  $\mathcal O(\mathcal D, W)$,  via the map $E_{c_\tau}$, is equal to the subspace     \begin{multline} \mathcal O(G \times_\tau W):=\{ f :G\rightarrow W, smooth, \\ f(xk)=\tau(k^{-1}) f(x),\,k\in K, x\in G,   R_X(f)=0 \,\, \forall X \in \p^- \}.\end{multline}
 We also consider similar isomorphisms for the domains $H/L, H_0/L$.
 Now,  we may    realize holomorphic Discrete Series representations in spaces of "holomorphic" sections in $\Gamma^\infty (G\times_\tau W)$. That is,
 given $(\tau,W)$ the lowest $K$-type of a {\it Holomorphic} Discrete Series representation, we consider the realization \begin{multline} H_{hol}^2(G \times_\tau W)=\{ f: G\rightarrow W : C^\infty,  f(xk)=\tau(k^{-1}) f(x),\,k\in K,    x\in G, \\ \int_G \vert f(x)\vert^2 dx <\infty,\,\text{and}\,  R_X(f)=0, \forall X \in \p^-\}. \end{multline}
 In a similar way, we realize the  holomorphic Discrete Series representations for $H$ or $H_0$. In order to justify this realization of Holomorphic Discrete Series we recall that the equivalence $E_{c_\tau}$ is a isometry from $L^2(G\times_\tau W)$ onto $L_\tau^c(\mathcal D, W)$,    see section~\ref{sec:sym(holo)} and references therein.

 We recall that  $(G,H)$ is a  symmetric pair,        $H_0=(G^{\sigma \theta})_0$, and the generalized Cartan decomposition, this is the smooth decomposition $G=exp(\h\cap \p)exp(\h_0\cap \p)K$.  For $  x \in G$, we write the corresponding unique decomposition $x=x_1x_2k$.

 A aim of this subsection is to show \begin{thm}  The kernel $K_T$ of each holographic operator $ T$ for  holomorphic Discrete Series presented in $H_{hol}^2$-model, decomposes as the composition "separation of variables formula" \begin{multline*}K_T(h, x_1x_2k)(z) =\tau(k^{-1})K_T(e,x_2)(K_\sigma (h,x_1)z),\\ h\in H, x_1 \in exp(\h \cap \p), x_2 \in exp(\h_0 \cap \p), k\in K. \end{multline*}.
 \end{thm}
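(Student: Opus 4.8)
The plan is to work entirely in the $H_{hol}^{2}$-model on $G$, to reduce the two‑variable formula — via the covariance properties of the two kernels — to a single ``transversality'' identity for $K_{T}^{c}$, and then to derive that identity from Nakahama's formula~\ref{prop:nakahama}. The decisive input will be the behaviour of the Harish--Chandra triangular decomposition of a product $\exp(-\bar w)\exp(z)$ in which $w\in\mathcal D_{\h}$ and $z$ lies in the transverse subdomain $\mathcal D_{\h_{0}}$.

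\textbf{The reduction.} First I would peel off the $K$-factor: since $K_{T}(h,\cdot)z\in H^{2}(G,\tau)\subset L^{2}(G\times_{\tau}W)$, the kernel is right-$K$-equivariant in its second argument, so $K_{T}(h,x_{1}x_{2}k)=\tau(k^{-1})K_{T}(h,x_{1}x_{2})$. Next, $T$ being $H$-intertwining forces $K_{T}(h_{0}h,h_{0}x)=K_{T}(h,x)$ for $h_{0}\in H$ (the group-model shadow of the relation in \ref{eq:forK_T}, the cocycles being trivial at $e$), while the reproducing kernel of $H^{2}(H,\sigma)$ satisfies the left-invariance $K_{\sigma}(h_{0}h,h_{0}x)=K_{\sigma}(h,x)$. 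Taking $h_{0}=x_{1}^{-1}\in\exp(\h\cap\p)\subset H$ gives $K_{T}(h,x_{1}x_{2})=K_{T}(x_{1}^{-1}h,x_{2})$ and $K_{\sigma}(h,x_{1})=K_{\sigma}(x_{1}^{-1}h,e)$, so the theorem reduces to the identity $K_{T}(h',x_{2})=K_{T}(e,x_{2})K_{\sigma}(h',e)$ for $h'\in H$, $x_{2}\in\exp(\h_{0}\cap\p)$. Now $K_{\sigma}^{c}(h'\!\cdot\!o,o)=\sigma\big((\exp(-\overline{h'\!\cdot\!o}))_{0}\big)^{-1}=I_{Z}$, because $\overline{h'\!\cdot\!o}\in\p_{\h}^{-}$ already lies in $P_{-}$ (so its $K_{\C}$-part is $e$); hence $K_{\sigma}(h',e)=c_{\sigma}(h',e)^{\star-1}$, and transcribing the reduced identity through the dictionary \ref{eq:ttc} between the $G$-model and $G/K$-model kernels, all the $\tau$- and $\sigma$-cocycle factors cancel and it becomes precisely
\begin{equation*}
K_{T}^{c}(w,z)=K_{T}^{c}(o,z)\qquad\text{for all }w\in\mathcal D_{\h},\ z\in\mathcal D_{\h_{0}}. \tag{$\ast$}
\end{equation*}
In words: $K_{T}^{c}(\,\cdot\,,z)$ is constant along $\mathcal D_{\h}$ as soon as the second argument is taken in the transverse subdomain.

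\textbf{Proof of $(\ast)$.} By Nakahama's formula~\ref{prop:nakahama}, $K_{T}^{c}(w,z)=K_{\tau}^{c}(w,z)\,K_{T}^{c}\big(o,\,P_{2}((\exp(-\bar w)\exp z)_{+})\big)$. For $w\in\mathcal D_{\h}$ and $z\in\mathcal D_{\h_{0}}$ we have $\bar w\in\p_{\h}^{-}$ and $z\in\p_{\h_{0}}^{+}$. Writing $m:=(\exp(-\bar w)\exp z)_{0}\in K_{\C}$, so that $K_{\tau}^{c}(w,z)=\tau(m)^{-1}$ by definition, the content of $(\ast)$ is that the ``impurity'' $P_{2}((\exp(-\bar w)\exp z)_{+})$ differs from $z$ exactly by the $m$-twist that $K_{T}^{c}(o,\cdot)$ then reconverts into the factor $\tau(m)$ — using that $K_{T}^{c}(o,\cdot)$ takes values in $\mathcal P(\p_{\h_{0}}^{+},W)$ and so depends on its argument only through $P_{2}$, together with the holomorphic extension of the $L_{\C}$-equivariance \ref{eq:lk_t} of $K_{T}^{c}(o,\cdot)$. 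The $\tau(m)^{-1}$ coming from $K_{\tau}^{c}$ and the $\tau(m)$ coming from $K_{T}^{c}(o,\cdot)$ then cancel, leaving $K_{T}^{c}(o,z)$. When $(G,H)$ is a direct product this is immediate: $-\bar w$ and $z$ commute, $m=e$, and $(\exp(-\bar w)\exp z)_{+}=z\in\p_{\h_{0}}^{+}$.

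\textbf{Main obstacle.} The hard part is the structural statement just used: unwinding the triangular decomposition of $\exp(-\bar w)\exp z$ for $\bar w\in\p_{\h}^{-}$, $z\in\p_{\h_{0}}^{+}$ and verifying that the impurities generated by the iterated brackets — which, by the parity relations $[\p_{\h}^{-},\p_{\h_{0}}^{+}]\subseteq(\k\cap\q)_{\C}$, $[(\k\cap\q)_{\C},\p_{\h_{0}}^{+}]\subseteq\p_{\h}^{+}$, $[\p_{\h}^{\mp},\p_{\h}^{\pm}]\subseteq\l_{\C}$ and $[\p^{+},\p^{+}]=0$, fall into $\p_{\h}^{+}$ on the $P_{+}$-side and into $(\k\cap\q)_{\C}$ on the $K_{\C}$-side — are reorganized by the automorphy factor $K_{\tau}^{c}(w,z)$ into exactly the twist that $K_{T}^{c}(o,\cdot)$ absorbs, with no residue. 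This is a Jordan-theoretic computation of the type carried out in \cite{FK},\cite{Neeb2},\cite{Sa}, and it is precisely where the orthogonal splitting $\p^{+}=\p_{\h}^{+}\oplus\p_{\h_{0}}^{+}$ and the facts $H\!\cdot\!\mathcal D_{\h}=\mathcal D_{\h}$, $H_{0}\!\cdot\!\mathcal D_{\h_{0}}=\mathcal D_{\h_{0}}$ enter in an essential way. An alternative proof of $(\ast)$ that sidesteps Nakahama's formula would be to show that $r_{0}^{c}\circ T^{c}$ annihilates every $L$-type of $V_{\sigma}$ other than the lowest one — equivalently, that the copy $T^{c}(V_{\sigma})\subset V_{\tau}$ restricts to $\mathcal D_{\h_{0}}$ as a single copy of $Z$ — which again comes down to the transversality of $\mathcal D_{\h}$ and $\mathcal D_{\h_{0}}$ in $\mathcal D$.
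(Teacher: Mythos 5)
Your opening reduction is fine: using $K_T(h,xk)=\tau(k^{-1})K_T(h,x)$ and the simultaneous invariance $K_T(h_0h,h_0x)=K_T(h,x)$, $K_\sigma(h_0h,h_0x)=K_\sigma(h,x)$, the theorem is indeed equivalent to $K_T(h',x_2)=K_T(e,x_2)K_\sigma(h',e)$ for $h'\in H$, $x_2\in exp(\h_0\cap\p)$. But from that point on the proposal does not prove the theorem. Everything is made to rest on the identity $(\ast)$, and $(\ast)$ is exactly what you do not establish: you defer it, under ``Main obstacle,'' to an unspecified Jordan-theoretic computation, so the heart of the argument is missing. Moreover the mechanism you sketch for extracting $(\ast)$ from Nakahama's formula~\ref{prop:nakahama} does not parse group-theoretically: the element $m=(\exp(-\bar w)\exp z)_0$ lies in $K_\C$ but in general \emph{not} in $L_\C$, because $[\p_\h^-,\p_{\h_0}^+]\subseteq(\k\cap\q)_\C$; and the only equivariance available for $K_T^c(o,\cdot)$, namely \ref{eq:lk_t} (even after holomorphic extension to $L_\C$), produces a factor $\sigma(l^{-1})$ on the right which your claimed cancellation ``$\tau(m)^{-1}$ against $\tau(m)$'' simply ignores. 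So even granting the reduction, the decisive step is both unproved and, as described, unjustifiable.

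The transcription step itself, where you assert that ``all the $\tau$- and $\sigma$-cocycle factors cancel'' so that the theorem becomes the statement that $K_T^c(\cdot,z)$ is constant on $\mathcal D_\h$ for $z\in\mathcal D_{\h_0}$, is precisely the dangerous point: the paper warns explicitly that in the bounded-domain realization ``separation of variables via $K_\tau^c$'' does not hold in general, and its domain-model corollary keeps nontrivial factors $\tau((x_1\exp((x_2)_+))_0)$, $\sigma((x_1)_0)^{-1}$ and an argument twisted by $(x_1)_0$. A test against the paper's own example $(SU(1,1)\times SU(1,1),\mathrm{diag})$ shows why $(\ast)$ should not be taken on faith: for the $n=0$ component the holographic kernel in the domain model is, up to a constant, $K_T^c(w,(z_1,z_2))=(1-\bar wz_1)^{-\lambda}(1-\bar wz_2)^{-\lambda'}$, which on the transverse slice $(z_1,z_2)=(v,-v)$ visibly depends on $w$; so either the cocycle bookkeeping is not as you claim or the whole passage through the bounded-domain model must be organized differently, and in either case the argument as written does not go through. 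Note finally that the paper's proof is of a completely different nature and never invokes Nakahama's formula: it sets $\Phi(z):=r_0(K_T(e,\cdot)z)$, proves $\Phi\in Hom_L(Z,H_{hol}^2(H_0\times_\tau W))$ using $H$-admissibility (Kobayashi's ``$L$-finite implies $K$-finite''), the Schmid/Knapp--Wallach identification of the $K$-finite vectors and the $L^2$-continuity of $r_0$, constructs the operator $T_\Phi$ whose kernel has the product form by construction, and concludes $T=T_\Phi$ from the injectivity of $r_0$ on $\mathcal L_{W,H}$; none of these ingredients appear in your proposal, and the structural identity you would need instead is left unproven.
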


 For this we consider    a holomorphic Discrete Series $H_{hol}^2(H \times_\sigma Z)$ for $H$ and we fix $\Phi : Z\rightarrow \mathcal O(H_0 \times_\tau W)$. We define $T_\Phi : \mathcal O(H\times_\sigma Z) \rightarrow C^\infty (G \times_\tau W)$ by the rule: For $x=x_1x_2k,  x_1\in exp(\h\cap \p), x_2\in exp(\h_0\cap \p),  k \in K$
  \begin{equation} \label{eq:tmultipli} T_\Phi(g)(x):=\tau(k^{-1})\Phi (g(x_1))(x_2). \end{equation}

 We claim: {\it $T_\Phi$ is $H$-intertwining map for the   respective  left  translation actions if and only if}   \begin{equation} \label{eq:phi}  \Phi (\cdot)(lyl^{-1}) =\tau(l)\Phi(\sigma(l^{-1})(\cdot))(y),   \forall \, l\, \in L, y \in exp(\h_0\cap \p).\end{equation}

 \smallskip
 In fact,  we fix $h\in H$. We write $hx_1=x_3 l_3$ with $x_3 \in exp(\h\cap \p), l_3 \in L$,   $hx_1x_2k=x_3 l_3x_2l_3^{-1}l_3k$.  Then, the equality  $T_\Phi(g)(hx)=T_\Phi(L_{h^{-1}}g)(x)$ is equivalent to the  equalities
 \begin{equation*}
 \begin{split}
  \tau(k^{-1})\tau(l_3^{-1})\Phi( g(x_3))(l_3x_2l_3^{-1}) & =\tau(k^{-1})\tau(l_3^{-1})\Phi(g(hx_1l_3^{-1}))(l_3x_2l_3^{-1}) \\    & =\tau(k^{-1})\tau(l_3^{-1})\Phi(\sigma (l_3)g(hx_1))(l_3x_2l_3^{-1}) \\  & \stackrel{?}=\tau(k^{-1})\Phi(g( hx_1))(x_2). \end{split}\end{equation*}
 Since,   the set  $linspan_\C \{ g(hx_1): g \in H_{hol}^2(H,\sigma)[Z]    \}$ is equal to $Z$,  we get the equivalence.

 \smallskip We would like to point out that   condition \ref{eq:phi} on $\Phi$ is equivalent to   $\Phi$ belongs to $  Hom_L( Z,  C^\infty (H_0 \times_\tau W))$.

 \subsubsection{Injectivity of $r_0$ restricted to $\mathcal L_{W,H}= \mathcal L_\lambda$} \label{sec:r0injec2}In \ref{sec:r0injec} we have shown the restriction map $r_0^c : C^\infty (\mathcal D,W)\rightarrow C^\infty (\mathcal D_{\h_0},W)$ when restricted to $\mathcal L_{W,H}^c$ is injective. The commutativity of the diagram below, shows the restriction map $r_0 : \Gamma^\infty (G\times_\tau W)\rightarrow \Gamma^\infty (H_0\times_\tau W)$ restricted to $\mathcal L_{W,H}=\mathcal L_\lambda$ is one to one.

 \xymatrix{  {\mathcal L_{W,H}}\ar[r]^{=} \ar@<1ex>[d]^{\subset}  &  {\mathcal L_{W,H}}\ar[r]^-{\subset} \ar@<1ex>[d]^{E_{c_\tau}} &{H_{hol}^2(G\times_\tau W)\subset L^2(G\times_\tau W)} \ar@<1ex>[d]^{E_{c_\tau}}            \\
   {H_{hol}^2(G\times_\tau W)}\ar@<1ex>[d]^{r_0} &  {\mathcal L_{W,H}^c}\ar[r]^{\subset}\ar@<1ex>[d]^{r_0^c}     & {V_\tau \subset L_\tau^2 (\mathcal D,W)}  \ar@<1ex>[d]^{r_0^c}        \\
    {L^2(H_0 \times_\tau W)}\ar[r]^{E_{c_\tau}}& {L_\tau^2(\mathcal D_{ h_0},W)}\ar[r]^{=}
  & {L_\tau^2(\mathcal D_{ h_0},W)}  .}

  Here, $\mathcal L_{W,H}:=\mathcal L_\lambda $ is the linear span of the totality of  lowest $L$-type subspaces on each irreducible factor for $res_H(H_{hol}^2(G\times_\tau W))$, that is,\\ $\mathcal L_{W,H}=\oplus_{H_{hol}^2(H\times_\sigma  Z) \in Spec_H(H_{hol}^2(G\times_\tau W)) } H_{hol}^2(G\times_\tau W) [H_{hol}^2(H\times_{\sigma } Z)][Z]$.  $V_\tau =\mathcal O(\mathcal D, W)\cap L_\tau^2 (\mathcal D,W)$.  $\mathcal L_{W,H} =\mathcal P(\p_{\h_0}^+,W)$ is the linear span of the lowest $L$-type subspaces on each $H$-isotypic component for $res_H(V_\tau)$. $ E_{c_\tau}$ is the map $f\mapsto c_\tau(\cdot,e)f(\cdot)$. $ E_{c_\tau}$ establishes a $H$-bijection between $H_{hol}^2(G\times_\tau W)$ (resp. $L^2(G\times_\tau W)$)  and $V_\tau$ (resp. $L_\tau^2 (\mathcal D,W)$). $\mathcal L_{W,H}^c=E_{c_\tau}(\mathcal L_{W,H})$.

   \subsubsection{}  To follow, we present a new expression for holographic operators.
   The hypothesis are: $(G,H)$ is a  symmetric pair;       We also assume $H/L \rightarrow G/K$ is a holomorphic embedding, and $H_{hol}^2(G\times_\tau W)$ (resp. $H_{hol}^2(H\times_\sigma Z)$) is a holomorphic Discrete Series representation of $G$ (resp. is a holomorphic representation for $H$).  We recall $H_0=(G^{\sigma \theta})_0$ and the "refined Cartan decomposition", that is,
   the smooth decomposition $G=exp(\h\cap \p)exp(\h_0\cap \p)K$.   \begin{thm}\label{prop:kernforhol} For each holographic operator  $T \in Hom_H(H_{hol}^2(H\times_\sigma Z), H_{hol}^2(G\times_\tau W)$ there exists $\Phi \in Hom_L(Z, H_{hol}^2(H_0 \times_\tau W))$ so that for every $g\in H_{hol}^2(H\times_\sigma Z)$, $G\ni x=x_1x_2k, x_1 \in exp(\h \cap \p),  x_2 \in exp(\h_0 \cap \p), k\in K$,  we have $$T(g)(x)=T_\Phi(g)(x)=\tau(k^{-1})\Phi (g(x_1))(x_2).$$ Whence,  its kernel $K_T$ is equal to the function \begin{equation*}K_T(h,x)z= \tau(k^{-1})\Phi ( K_\sigma (x_1,h)^\star z)(x_2) = \tau(k^{-1})\Phi  ( K_\sigma (h,x_1)z)(x_2).\end{equation*}
   \end{thm}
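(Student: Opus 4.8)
The plan is to realize every holographic operator as one of the explicitly defined operators $T_\Phi$ of \ref{eq:tmultipli}, and then to read the kernel off from the construction. Fix $\Phi\in Hom_L(Z,H_{hol}^2(H_0\times_\tau W))$ and form $T_\Phi$ by $T_\Phi(g)(x_1x_2k)=\tau(k^{-1})\Phi(g(x_1))(x_2)$ relative to the refined Cartan decomposition $G=exp(\h\cap\p)exp(\h_0\cap\p)K$. Because $\Phi$ is $L$-equivariant with values in holomorphic sections, it satisfies the compatibility condition \ref{eq:phi}; hence, by the argument preceding that display, $T_\Phi$ is a well-defined $H$-intertwining linear map from $H_{hol}^2(H\times_\sigma Z)$ into $C^\infty(G\times_\tau W)$.

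The crux is to show that $T_\Phi$ actually takes values in $H_{hol}^2(G\times_\tau W)$ and is continuous; two things must be checked, that $T_\Phi g$ is $L^2$ and that $R_X(T_\Phi g)=0$ for all $X\in\p^-$. For the first I would disintegrate Haar measure along $G=exp(\h\cap\p)exp(\h_0\cap\p)K$ as $dg=J(x_1,x_2)\,dx_1\,dx_2\,dk$, integrate $k$ out using unitarity of $\tau$, and bound the resulting integral of $\Vert\Phi(g(x_1))(x_2)\Vert_W^2$ by combining the $L^2$-estimate $\Vert\Phi(z)\Vert_{L^2(H_0\times_\tau W)}\lesssim\Vert z\Vert_Z$ with the local boundedness of $g\mapsto g(x_1)$; the same disintegration gives continuity in the relevant topologies. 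For holomorphy I would split $\p^-=\p_\h^-\oplus\p_{\h_0}^-$ compatibly with the $\pm$-grading (using $\p=\p_\h\oplus\p_{\h_0}$ and that $\sigma$ commutes with $\theta$), transfer the computation to the Harish-Chandra model through $E_{c_\tau}$, and exploit that $g$ is annihilated by $R_{\p_\h^-}$ and each $\Phi(z)$ by $R_{\p_{\h_0}^-}$, together with the explicit $\g$-action of \cite{JV} recalled in \ref{sub:jv} and the $\sigma,\theta$-invariance of the triangular decomposition $P_+K_\C P_-$. This holomorphy verification — where a $\p^-$-derivative of $T_\Phi g$ mixes all three factors of the refined decomposition — is the step I expect to be the main obstacle.

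Granting that, the rest is bookkeeping. Injectivity of $\Phi\mapsto T_\Phi$: if $T_\Phi=0$, restrict $x$ to $H_0$; by uniqueness of the refined decomposition one then has $x_1=e$, so $\Phi(g(e))(x_2)=0$ for all $g$ and all $x_2\in exp(\h_0\cap\p)$, and since $(K_\sigma(\cdot,e)^\star z)(e)=K_\sigma(e,e)^\star z$ with $K_\sigma(e,e)$ a positive multiple of $I_Z$, evaluation at $e$ carries the lowest $L$-type of $H_{hol}^2(H\times_\sigma Z)$ onto $Z$, whence $\Phi=0$. Thus $\Phi\mapsto T_\Phi$ is an injective linear map $Hom_L(Z,H_{hol}^2(H_0\times_\tau W))\to Hom_H(H_{hol}^2(H\times_\sigma Z),H_{hol}^2(G\times_\tau W))$. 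Now $Hom_L(Z,H_{hol}^2(H_0\times_\tau W))=Hom_L(Z,\mathbf H^2(H_0,\tau))$, which by the duality theorem (Theorem~\ref{prop:firstversion}, equivalently Theorem~\ref{prop:direciso}) has the same finite dimension as $Hom_H(V_\sigma,V_\tau)=Hom_H(H_{hol}^2(H\times_\sigma Z),H_{hol}^2(G\times_\tau W))$, the last identification being the passage between the $\mathcal O(\mathcal D)$-model and the $H_{hol}^2$-model of the same representations. An injective linear map between finite-dimensional spaces of equal dimension is bijective, so every holographic operator $T$ is of the form $T_\Phi$ for a unique $\Phi\in Hom_L(Z,H_{hol}^2(H_0\times_\tau W))$, which is the first assertion.

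Finally, for the kernel: by the group-model version of Proposition~\ref{prop:propertiesholo}(2) one has $K_T(h,x)z=T(K_\sigma(\cdot,h)^\star z)(x)$. Writing $x=x_1x_2k$ and substituting $T=T_\Phi$ gives $K_T(h,x)z=\tau(k^{-1})\Phi\big((K_\sigma(\cdot,h)^\star z)(x_1)\big)(x_2)=\tau(k^{-1})\Phi(K_\sigma(x_1,h)^\star z)(x_2)$, and since $K_\sigma(x_1,h)^\star=K_\sigma(h,x_1)$ this equals $\tau(k^{-1})\Phi(K_\sigma(h,x_1)z)(x_2)$, as claimed.
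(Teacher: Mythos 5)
Your strategy runs in the opposite direction from what the statement actually requires, and the step you yourself flag as "the main obstacle" is precisely where the argument breaks down. To make the dimension count work you must first know that $T_\Phi$ lands in $H_{hol}^2(G\times_\tau W)$ for an \emph{arbitrary} $\Phi\in Hom_L(Z,H_{hol}^2(H_0\times_\tau W))$, i.e.\ that $T_\Phi(g)$ is square integrable on $G$ and annihilated by $R_X$, $X\in\p^-$. Neither is established. Your $L^2$ sketch does not work as stated: the Jacobian $J(x_1,x_2)$ of the generalized Cartan decomposition $G=\exp(\h\cap\p)\exp(\h_0\cap\p)K$ does not split into an $H$-density in $x_1$ times an $H_0$-density in $x_2$, so the bound $\Vert\Phi(z)\Vert_{L^2(H_0\times_\tau W)}\lesssim\Vert z\Vert_Z$ (which refers to $H_0$-Haar measure) combined with pointwise control of $g(x_1)$ does not control $\int J(x_1,x_2)\Vert\Phi(g(x_1))(x_2)\Vert^2\,dx_1dx_2$; the paper's concluding Remark notes that even with leading-exponent analysis this integrability is only obtained for parameters far from the walls. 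Likewise, the holomorphy of $T_\Phi(g)$ is not a consequence of $R_{\p_\h^-}g=0$ and $R_{\p_{\h_0}^-}\Phi(z)=0$ separately: a right derivative at $x_1x_2k$ in a $\p^-$-direction re-mixes all three factors of the decomposition, and the paper remarks that closing this gap would require a Hartogs-type theorem for the Schmid ($\bar\partial$) operator. In the holomorphic setting the "converse" you are trying to prove directly is in fact deduced \emph{from} the theorem (via the duality map $T\mapsto r_0(K_T(e,\cdot)z)$), so your route is circular unless you supply genuinely new analytic input. Without the containment of the image of $\Phi\mapsto T_\Phi$ in the space of holographic operators, injectivity plus equality of dimensions gives nothing.

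The paper avoids all of this by starting from the given holographic $T$: it defines $\Phi(z)(x_2l):=\tau(l^{-1})K_T(e,x_2)z$, shows $\Phi\in Hom_L(Z,H_{hol}^2(H_0\times_\tau W))$ using the functional equation of $K_T$, $H$-admissibility (Kobayashi: $L$-finite $\Rightarrow$ $K$-finite), the Schmid/Knapp--Wallach identification of $K$-finite vectors, and the $L^2$-continuity of the restriction $r_0$ from \cite{OV}; it then observes that both $K_T(e,\cdot)z$ and $K_{T_\Phi}(e,\cdot)z$ lie in $\mathcal L_{W,H}$ and have the same restriction to $H_0$, so the injectivity of $r_0$ on $\mathcal L_{W,H}$ (subsection~\ref{sec:r0injec2}) forces $K_T=K_{T_\Phi}$, hence $T=T_\Phi$. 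Your final kernel computation is fine, but it presupposes $T=T_\Phi$, which is exactly the point at issue. If you want to salvage your outline, replace the direct verification of membership of $T_\Phi(g)$ in $H_{hol}^2(G\times_\tau W)$ by this kernel-comparison argument, or restrict your map $\Phi\mapsto T_\Phi$ to those $\Phi$ of the form $r_0(K_T(e,\cdot)\,\cdot\,)$ coming from the duality theorem.
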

   Here, $ K_\sigma (h,x_1)z$ is the reproducing kernel for $H_{hol}^2(H \times_\sigma  Z)$.

\smallskip

\noindent
Theorem~\ref{prop:kernforhol} generalizes \cite[Theorem 5.1]{Na}.

  \begin{proof} For $H_0 \ni h_0= x_2 l, x_2 \in exp(\h_0 \cap \p), l\in L, z\in Z$, we define $\Phi(z)(x_2 l):=K_T(e,x_2 l)(z)= \tau(l^{-1})K_T(e,x_2 )(z)$. Hence, $\Phi(z)(\cdot)\in \Gamma^\infty(H_0 \times_\tau W)$. Next, the functional equation, \ref{eq:forK_T},  for the kernel $K_T$ yields  $\Phi$ satisfies \ref{eq:lk_t}, and,  owing to $R_X (K_T(h, \cdot)z)=0$ for each $h\in H, X \in \p^-$ we obtain $\Phi$ is "holomorphic", that is, $R_X (\Phi (z)(\cdot))=0$ for each $ X \in \p_{\h_0}^-$. Therefore, $\Phi \in Hom_L(Z, \mathcal O (H_0 \times_\tau W))$. To follow, we verify $\Phi(z)(\cdot)$ belongs to $L^2(H_0 \times_\tau W)$.  Owing to the representation $H_{hol}^2(G\times_\tau W)$ is $H$-admissible,  we may apply  the result of T. Kobayashi \cite{Kob2}  "In a $H$-admissible representation, every $L$-finite vector is $K$-finite" and we obtain,  $K_T(e,\cdot)z \in H_{hol}^2(G\times_\tau W)[H_{hol}^2(H\times_\sigma Z)][Z]\subset H_{hol}^2(G\times_\tau W)_{K-fin}$. A result due to    Schmid, Knapp-Wallach \cite{Sch} \cite[Cor. 9.6]{KW}       shows $H_{hol}^2(G\times_\tau W)_{K-fin}= H^2(G\times_\tau W)_{K-fin}$ and $H_{hol}^2(G\times_\tau W) \subset  H^2(G\times_\tau W)$. Finally,   $r_0 $ maps $H^2(G\times_\tau W)$ into $ L^2(H_0 \times_\tau W)$ (see \cite{OV}), and,   $\Phi(z)(x_2l)=\tau(l^{-1})r_0(K_T(e,\cdot)z)(x_2) $, whence   we have verified $\Phi(z)(\cdot)\in H_{hol}^2(H_0 \times_\tau W) $.

 \smallskip
   It readily follows: $T_\Phi(g)$ is "holomorphic"  for each $g \in H_{hol}^2(H\times_\sigma Z)$.

 \smallskip
  Since, via $E_{c_\sigma}$, $H_{hol}^2(H\times_\sigma Z)$ is isomorphic to the kernel of an elliptic operator, we have that $L^2$-convergence in $H_{hol}^2(H\times_\sigma Z)$ implies convergence in the natural topology in $\mathcal O(H\times_\sigma Z)$,    we  conclude
  $T_\Phi :H_{hol}^2(H\times_\sigma Z)\rightarrow \mathcal O(G\times_\tau W)$ is a continuous linear map.  Moreover,  since for $g \in H_{hol}^2(H\times_\sigma Z)$ the equality $g(h)=\int_H K_\sigma (y,h)g(y) dy, h\in H$  holds, we have $T_\Phi$ is a kernel linear map represented by the kernel \begin{equation*}K_{T_\Phi}(h, x_1x_2k)(\cdot):=\tau(k^{-1})\Phi ( K_\sigma (h,x_1)(\cdot))(x_2). \end{equation*}

  Owing to $T_\Phi(K_\sigma (\cdot,e)^\star)(z))= K_{T_\Phi}(e, \cdot)(z)$, as in Proposition~\ref{prop:propertiesholo}(5), we obtain $K_{T_\Phi}(e, \cdot)(z)$ is a $L$-finite vector. Thus, $E_{c_\tau}(K_{T_\Phi}(e,\cdot)z)$ is a $L$-finite vector in the $H$-admissible representation $(L_\cdot^\tau , \mathcal O(\mathcal D,W))$, Kob implies $E_{c_\tau}(K_T{T_\Phi}(e,\cdot))$ is a $K$-finite vector, now,  we recall that a $K$-finite holomorphic function is polynomial, thus $E_{c_\tau}(K_{T_\Phi}(e,\cdot))\in \mathcal O (\mathcal D,W)\cap L_\tau^2(\mathcal D,W)$ and hence, $K_{T_\Phi}(e,\cdot)z$  as well as $L_h(K_{T_\Phi}(e,\cdot))=K_{T_\Phi}(h,\cdot)$ belongs to $H_{hol}^2(G\times_\tau W)$. Finally, the equality $ K_{T_\Phi}(e,\cdot)z=T_\Phi(K_\sigma (\cdot,e)^\star z)$ yields $ K_{T_\Phi}(e,\cdot)\in H_{hol}^2(G\times_\tau W)[H_{hol}^2(H\times_\sigma Z)[Z] \subset \mathcal L_{W,H}$.
   Also,   $K_T(e,\cdot)(z)\in \mathcal L_{W,H}$. By construction, $r_0(K_T(e,\cdot)z)=r_0(K_{T_\Phi}(e,\cdot)z)$, whence,    due that $r_0$ is injective in $\mathcal L_{W,H}$ (see \ref{sec:r0injec2}) we obtain that both kernels are equal and $T=T_\Phi$.
  \end{proof}

 \begin{rmk} The converse to Theorem~\ref{prop:kernforhol} holds. In fact, given $\Phi \in Hom_L(Z, H_{hol}^2(H_0 \times_\tau W)$, the duality theorem \ref{prop:direciso} shows there exists $T_1 \in  Hom_H(H_{hol}^2(H\times_\sigma Z), H_{hol}^2(G\times_\tau W))$ so that $r_0(K_{T_1}(e,\cdot)z)=\Phi(z)$.
 \end{rmk}

 \begin{cor}\label{prop:splitform} The kernel $K_T$ of each holographic operator $ T \in Hom_H(H_{hol}^2(H\times_\sigma Z), H_{hol}^2(G\times_\tau W))$, decomposes as the composition "separation of variables formula" \begin{multline*}K_T(h, x_1x_2k)(z) =\tau(k^{-1})K_T(e,x_2)(K_\sigma (h,x_1)z),\\ h\in H, x_1 \in exp(\h \cap \p), x_2 \in exp(\h_0 \cap \p), k\in K. \end{multline*}.
 \end{cor}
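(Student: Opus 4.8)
The plan is to read off the corollary directly from Theorem~\ref{prop:kernforhol}, whose content is precisely the existence of the "separating" holomorphic datum. First I would invoke Theorem~\ref{prop:kernforhol} to produce $\Phi \in Hom_L(Z, H_{hol}^2(H_0 \times_\tau W))$ with $T = T_\Phi$ and
\[
K_T(h, x_1 x_2 k)z = \tau(k^{-1})\,\Phi\big(K_\sigma(h,x_1)z\big)(x_2),
\]
valid for $h \in H$, $x_1 \in \exp(\h\cap\p)$, $x_2 \in \exp(\h_0\cap\p)$, $k \in K$. Here $x = x_1 x_2 k$ is the refined Cartan decomposition used throughout that subsection, and since $H_{hol}^2(H\times_\sigma Z)$ is $Z$-valued the reproducing kernel satisfies $K_\sigma(h,x_1) \in Hom_\C(Z,Z)$, so $K_\sigma(h,x_1)z \in Z$ and the expression makes sense.

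Next I would recall the explicit normalization of $\Phi$ established inside the proof of Theorem~\ref{prop:kernforhol}: for $H_0 \ni h_0 = x_2 l$ with $x_2 \in \exp(\h_0\cap\p)$, $l \in L$, one sets $\Phi(w)(x_2 l) := K_T(e,x_2 l)w = \tau(l^{-1})K_T(e,x_2)w$. Specializing to $l = e$ gives the identity $\Phi(w)(x_2) = K_T(e,x_2)w$ for all $w \in Z$ and all $x_2 \in \exp(\h_0\cap\p)$. Substituting $w = K_\sigma(h,x_1)z$ and feeding this back into the kernel formula above yields
\[
K_T(h, x_1 x_2 k)z = \tau(k^{-1})\,K_T(e,x_2)\big(K_\sigma(h,x_1)z\big),
\]
which is exactly the claimed separation-of-variables formula.

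I do not expect a substantive obstacle here: the corollary is the observation that the holomorphic "inner factor" $\Phi$ furnished by Theorem~\ref{prop:kernforhol} is nothing but the restriction of $K_T(e,\cdot)$ to $\exp(\h_0\cap\p)$. The only point requiring a line of care is the bookkeeping around the decomposition $x = x_1 x_2 k$: one must use the same compatible factorization as in the definition \ref{eq:tmultipli} of $T_\Phi$ and in the normalization $\Phi(w)(x_2 l) = \tau(l^{-1})K_T(e,x_2)w$, so that no extra cocycle factors $\tau((\cdot)_0)$ are introduced; and one should note that $K_\sigma(h,x_1)z$ genuinely lies in $Z$ so that the composition $K_T(e,x_2)\circ K_\sigma(h,x_1)$ is legitimate.
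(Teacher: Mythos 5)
Your proposal is correct and matches the paper's treatment: the corollary is stated as an immediate consequence of Theorem~\ref{prop:kernforhol}, precisely because the $\Phi$ constructed in that theorem's proof is defined by $\Phi(w)(x_2 l)=\tau(l^{-1})K_T(e,x_2)w$, so substituting $w=K_\sigma(h,x_1)z$ into the kernel formula gives the separation-of-variables identity exactly as you did.
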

 Theorem~\ref{prop:kernforhol}  and its corollary generalizes     \cite[example 10.1]{OV2} for the case $G=SU(n,1), H=S(U(n-1,1)\times U(1)), H_0=S(U(n-1)\times U(1,1))$.

 \bigskip
 To follow, we rewrite Theorem~\ref{prop:direciso} in the symmetric space model, later on, after a change of the  coordinates $(x_1, x_2) $ to   holomorphic coordinates we obtain another formula for the kernel of a holographic operator.

 We recall that for each holographic operator $T: H_{hol}^2(H\times_\sigma Z)\rightarrow H_{hol}^2(G\times_\tau W)$ we have associated a unique holographic operator   $T^c : H_{c_\sigma}^c (H/L, Z) \rightarrow H_{c_\tau}^c (G/K, W)$ via the composition $T_c =E_{c_\tau} T E_{c_\sigma}^{-1}$ and the respective kernels are related by the equality~\ref{eq:ttc} $$ K_{T^c}^c(hL,xK)=c_\tau (x,e)K_T (h,x) c_\sigma (h,e)^\star, \, \forall h \in H, x \in G. \eqno(\natural)$$
 Hence, the formula for $K_T $ obtained in Corollary~\ref{prop:splitform} and \ref{prop:propertiesholo} (5) yields \begin{cor}
 \begin{equation*} K_{T^c}^c( hL, x_1x_2K)=\tau( (x_1exp{(x_2)_+})_0)K_{T^c}^c (o,(x_1)_0 x_2K)    K_\sigma^c(hL, x_1K). \end{equation*}
 \end{cor}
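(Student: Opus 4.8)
The plan is to start from the "separation of variables" formula for the kernel of a holographic operator in the group model, namely Corollary~\ref{prop:splitform}, which gives
\[
K_T(h, x_1x_2k)(z) =\tau(k^{-1})K_T(e,x_2)(K_\sigma (h,x_1)z),
\]
and then transport this identity to the symmetric space model via the kernel-relation $(\natural)$, i.e.\ $K_{T^c}^c(hL,xK)=c_\tau (x,e)K_T (h,x) c_\sigma (h,e)^\star$. First I would substitute $x=x_1x_2K$ (choosing the representative $x = x_1 x_2$ in $G$, so $k=e$) into $(\natural)$, obtaining $K_{T^c}^c(hL,x_1x_2K)=c_\tau(x_1x_2,e)\,K_T(h,x_1x_2)\,c_\sigma(h,e)^\star$. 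Then I would plug in the split formula for $K_T(h,x_1x_2)$ with $k=e$, which yields $c_\tau(x_1x_2,e)\,K_T(e,x_2)\big(K_\sigma(h,x_1)\,\cdot\big)\,c_\sigma(h,e)^\star$.

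The next step is to recognize the three pieces of this expression in terms of $c$-normalized kernels. On the left-of-$K_T(e,x_2)$ side, the factor $c_\tau(x_1x_2,e)=\tau((x_1\exp x_2)_0\cdot(\text{correction}))$; using the cocycle identity $c_\tau(ab,x)=c_\tau(a,bx)c_\tau(b,x)$ and the explicit holomorphic cocycle $c_\tau(g,z)=\tau((g\exp z)_0)$, I would rewrite $c_\tau(x_1x_2,e)$ and absorb part of it into converting $K_T(e,x_2)$ into $K_{T^c}^c(o,(x_1)_0 x_2 K)$. Concretely, by Proposition~\ref{prop:propertiesholo}(2) and the relation \eqref{eq:ttc}, $K_{T^c}^c(o, yK)=c_\tau(y,e)K_T(e,y)$ for $y\in G$; applied with $y$ the appropriate element (here the point $(x_1)_0 x_2$, reflecting that the $\p^-$-part and $K_\C$-part of $x_1$ get reorganized when one passes from $x_1x_2$ to a product in $P_+K_\C P_-$ normal form), this produces the factor $K_{T^c}^c(o,(x_1)_0 x_2 K)$. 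The remaining cocycle factor $\tau\big((x_1\exp{(x_2)_+})_0\big)$ is exactly what is left over from $c_\tau(x_1x_2,e)$ after this identification — this is a Baily–Borel / triangular-decomposition bookkeeping computation using $x_1x_2 = \exp((x_1x_2)_+)\,(x_1x_2)_0\,\exp((x_1x_2)_-)$ and the fact that $x_1\in\exp(\h\cap\p)$ acts on $\mathcal D$. Finally, on the right, the product $K_\sigma(h,x_1)$ together with $c_\sigma(h,e)^\star$ is converted into $K_\sigma^c(hL,x_1K)$ by the analogue of \eqref{eq:ktaucktau}, that is $K_\sigma^c(hL,x_1K)=c_\sigma(x_1,e)K_\sigma(h,x_1)c_\sigma(h,e)^\star$, combined with the fact that for $x_1\in\exp(\h\cap\p)$ the cocycle $c_\sigma(x_1,e)=\sigma((x_1)_0)$ is absorbed consistently.

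Assembling these three identifications gives precisely
\[
K_{T^c}^c( hL, x_1x_2K)=\tau\big( (x_1\exp{(x_2)_+})_0\big)\,K_{T^c}^c (o,(x_1)_0 x_2K)\,K_\sigma^c(hL, x_1K),
\]
as claimed. I expect the main obstacle to be the careful tracking of the triangular ($P_+K_\C P_-$) decomposition under the non-commuting product $x_1x_2$ with $x_1\in\exp(\h\cap\p)$, $x_2\in\exp(\h_0\cap\p)$: one must verify that the "$K_\C$-part" bookkeeping indeed collapses to the stated $\tau((x_1\exp(x_2)_+)_0)$ and the argument $(x_1)_0 x_2K$, using that the triangular decomposition is $\sigma$-stable (so it restricts compatibly to $H$ and $H_0$) and the $L$-equivariance of $K_{T^c}^c(o,\cdot)$ from \eqref{eq:lk_t}. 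Everything else is a routine substitution of the cocycle/kernel normalization identities already recorded in Section~\ref{sec:sym(holo)} and the present section.
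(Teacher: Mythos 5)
Your proposal is correct and follows essentially the same route as the paper: the paper likewise specializes the group-model separation formula of Corollary~\ref{prop:splitform} at $k=e$, transports it through the relation $(\natural)$, and then uses the cocycle factorization $c_\tau(x_1x_2,o)=c_\tau(x_1,x_2\cdot o)\,c_\tau(x_2,o)=\tau((x_1\exp((x_2)_+))_0)\tau((x_2)_0)$, $c_\sigma(b,o)=\sigma((b)_0)$ and \eqref{eq:ktaucktau} to convert $\tau((x_2)_0)K_T(e,x_2)$ and $K_\sigma(h,x_1)c_\sigma(h,e)^\star$ into the $c$-normalized kernels. The only difference is presentational: the triangular-decomposition bookkeeping you flag as the ``main obstacle'' is exactly the short cocycle computation the paper writes out, with the leftover $\sigma((x_1)_0)^{-1}$ absorbed into the argument $(x_1)_0x_2K$ via the (holomorphically extended) $L$-equivariance \eqref{eq:lk_t}, just as you indicate.
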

  Here, $ x_1 \in exp(\h \cap \p), x_2 \in exp(\h_0 \cap \p), G\ni  a=exp(a_+)\,(a)_0\,\, exp(a_-)$, $a_\pm \in \p^\pm, (a)_0 \in K_\C$.

  \smallskip
  In fact, the equality $\natural $, $a\cdot o=a_+$, $c_\tau (x_1x_2,o)=c_\tau (x_1, x_2\cdot o)c_\tau (x_2,o)$ \\ $= \tau((x_1 exp((x_2)_+)))_0)\tau ((x_2)_0)$, $c_\sigma (b,o)=\sigma ((b)_0)  $  and \ref{eq:ktaucktau} let us obtain

 \smallskip
  $K_{T^c}^c( hL, x_1x_2K)z$

   $= \tau((x_1 exp((x_2)_+)))_0)\tau ((x_2)_0) K_{T}(e,x_2)(K_\sigma (h,x_1)\sigma ((h)_0)^\star z$

  $=\tau((x_1 exp((x_2)_+)))_0)  K_{T^c}^c(e,x_2K)\sigma( (x_1)_0)^{-1} K_\sigma^c (hL,x_1K)z$.

  \medskip

\cite[Example 10.1]{OV2}  for $(SU(n,1), S(U(n-1,1)\times U(1))$ and  holomorphic scalar representations in the bounded symmetric space realization, shows the factor

  \phantom{xxxxxxxxxx} $\tau( (x_1exp{(x_2)_+})_0)\, K_{T^c}^c (o,x_2K)\, \sigma ((x_1)_0^{-1})$

   \noindent
   does not depends on $x_1, "w_1" \in \p_\h^+$.
  We do not know,  in the bounded symmetric realization of holomorphic Discrete Series,  when "separation of variables via $K_\tau^c $ " does   hold. On the positive side, in the $H_{hol}$ realization, Corollary~\ref{prop:splitform} gives:

 \begin{cor}\label{cor:symmsplit} Let $S:  H_{hol}^2(G\times_\tau W) \rightarrow H_{hol}^2(H\times_\sigma Z)  $ be a symmetry breaking operator. Then, $$K_S(x_1x_2 k, h)=  K_S(x_2,e)K_\sigma(x_1,h) \tau (k). $$
 \end{cor}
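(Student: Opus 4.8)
The plan is to derive Corollary~\ref{cor:symmsplit} from Corollary~\ref{prop:splitform} by taking adjoints, exactly as the kernel of a symmetry breaking operator relates to the kernel of its adjoint holographic operator. First I would set $T:=S^\star$, so that $T\in Hom_H(H_{hol}^2(H\times_\sigma Z),H_{hol}^2(G\times_\tau W))$ is a holographic operator. By Proposition~\ref{prop:propertiesholo}(4) (the $H^2$-group-model analogue, applied in the $H_{hol}^2$ realization), the kernels are related by $K_{T^\star}(x,h)w=K_T(h,x)^\star w$, that is $K_S(x,h)=K_T(h,x)^\star$ for all $x\in G,h\in H$. Since $S=T^\star=(S^\star)^\star$ the roles are symmetric, so it suffices to transpose the formula already proved for $K_T$.

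Next I would substitute the separation-of-variables formula from Corollary~\ref{prop:splitform}. Writing $x=x_1x_2k$ with $x_1\in\exp(\h\cap\p)$, $x_2\in\exp(\h_0\cap\p)$, $k\in K$, that corollary gives
\begin{equation*}
K_T(h,x_1x_2k)(z)=\tau(k^{-1})K_T(e,x_2)\bigl(K_\sigma(h,x_1)z\bigr).
\end{equation*}
Taking the Hilbert-space adjoint in $Hom_\C(Z,W)$ of the linear map $z\mapsto K_T(h,x_1x_2k)(z)$, and using $(\tau(k^{-1}))^\star=\tau(k)$, $K_\sigma(h,x_1)^\star=K_\sigma(x_1,h)$ (the Hermitian symmetry of the reproducing kernel of $H_{hol}^2(H\times_\sigma Z)$, from Proposition~\ref{prop:propertiesksc}/the basic properties of reproducing kernels), and $K_T(e,x_2)^\star=K_S(x_2,e)$ (the adjoint relation above applied at $h=e$), I obtain
\begin{equation*}
K_S(x_1x_2k,h)=K_T(h,x_1x_2k)^\star=K_S(x_2,e)\,K_\sigma(x_1,h)\,\tau(k),
\end{equation*}
which is precisely the claimed identity.

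The only genuinely delicate point, and the one I would treat with care, is bookkeeping the order of composition and the placement of adjoints: $K_T(e,x_2)$ is a map $Z\to W$ while $K_S(x_2,e)$ is a map $W\to Z$, so the factor $K_\sigma(x_1,h):W\to Z$ (adjoint of $K_\sigma(h,x_1):Z\to W$) must sit to its right and $\tau(k):W\to W$ furthest right, matching the stated formula $K_S(x_1x_2k,h)=K_S(x_2,e)K_\sigma(x_1,h)\tau(k)$. Everything else is a formal transpose of an already-established identity, so I do not expect any analytic obstacle; the admissibility hypotheses needed (so that $S$ and $T=S^\star$ exist and are represented by smooth kernels, and $H_{hol}^2(G\times_\tau W)$ is $H$-admissible) are exactly those in force for Theorem~\ref{prop:kernforhol} and Corollary~\ref{prop:splitform}.
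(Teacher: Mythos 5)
Your route is exactly the one the paper intends: the corollary is obtained by dualizing Corollary~\ref{prop:splitform}, setting $T=S^\star$ and using the kernel relations $K_S(x,h)=K_T(h,x)^\star$ (Proposition~\ref{prop:propertiesholo}(4), Proposition~\ref{prop:propertiesksc}(5)) together with $K_\sigma(h,x_1)^\star=K_\sigma(x_1,h)$ and unitarity of $\tau$. So the idea is the paper's own, and there is no analytic obstacle.

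However, the step you yourself flagged as the delicate one is where your bookkeeping fails. The kernel $K_\sigma$ is the reproducing kernel of $H_{hol}^2(H\times_\sigma Z)$, a space of $Z$-valued functions on $H$, so $K_\sigma(h,x_1)$ and $K_\sigma(x_1,h)$ take values in $Hom_\C(Z,Z)$; they are not maps $Z\to W$ or $W\to Z$ as you assert. With the correct typings, Corollary~\ref{prop:splitform} reads $K_T(h,x_1x_2k)=\tau(k^{-1})\,K_T(e,x_2)\,K_\sigma(h,x_1)$ as a composition $Z\to Z\to W\to W$, and taking adjoints reverses the order:
\begin{equation*}
K_S(x_1x_2k,h)=K_\sigma(h,x_1)^\star\,K_T(e,x_2)^\star\,\tau(k^{-1})^\star=K_\sigma(x_1,h)\,K_S(x_2,e)\,\tau(k),
\end{equation*}
that is, $\tau(k)\in Hom_\C(W,W)$ acts first, then $K_S(x_2,e)\in Hom_\C(W,Z)$, then $K_\sigma(x_1,h)\in Hom_\C(Z,Z)$. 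The string you end with, $K_S(x_2,e)K_\sigma(x_1,h)\tau(k)$ --- which reproduces the display in the statement verbatim --- is not a composable product of operators under the correct typings, and your justification for that ordering rests on the false claim about the domain and codomain of $K_\sigma$. The substance of your argument (separation of variables for $K_S$ by dualizing the holographic formula) is right, and the fix is immediate: place $K_\sigma(x_1,h)$ to the left of $K_S(x_2,e)$, reading the corollary's display in that type-correct order.
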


   \begin{examp} $G=SU(1,1)\times SU(1,1)$, $\sigma(x,y)=(y,x)$, $H=\{ (x,x): x\in SU(1,1)\}$,  $K=T=\{(diag(e^{i\phi}, e^{-i\phi}), diag(e^{i\psi}, e^{-i\psi})) \}$,

     $G^{\sigma \theta}=H_0=\{ (x,x^{*-1}): x\in SU(1,1)\}$, $\h_0=LieH_0=\{ (x,-x^{*}): x\in \mathfrak{su}(1,1)\}=(1\times \theta_{\mathfrak{su}(1,1)})H$.
 \begin{multline*}  exp(\p \cap \h)   = \{ x_1(a):=   \big(cosh(\vert a \vert) I \\ +\frac{sinh(\vert a \vert)}{\vert a \vert}(\begin{smallmatrix}   0 & a \\ \bar a & 0 \end{smallmatrix}), \, cosh(\vert a \vert) I+\frac{sinh(\vert a \vert)}{\vert a \vert}(\begin{smallmatrix}   0 & a \\ \bar a & 0 \end{smallmatrix})\big),  a \in \C \}. \end{multline*}
 \begin{multline*} exp (\p \cap \mathfrak h_0)   = \{  x_2(a):=  \big(cosh(\vert a \vert) I \\ +\frac{sinh(\vert a \vert)}{\vert a \vert}(\begin{smallmatrix}   0 & a \\ \bar a & 0 \end{smallmatrix}), \, cosh(\vert a \vert) I+\frac{sinh(\vert a \vert)}{\vert a \vert}(\begin{smallmatrix}   0 & -a \\ -\bar a & 0 \end{smallmatrix})\big),  a \in \C \}. \end{multline*}

 The generalized Cartan decomposition for $G$ reads:

 \phantom{xxxxxx} $G\ni x=x_1(a) x_2(b) k, \,\, a ,b \in \C, k\in K $.

   We set   \phantom{xxxxxxx} $\tau_\lambda (exp(\phi \left ( \begin{smallmatrix} i&0 \\ 0 &-i \end{smallmatrix} \right)))=e^{i\lambda \phi}, \lambda \geq 2$.

  Let $v_s(\begin{psmallmatrix} \alpha &\beta \\ \bar\beta& \bar\alpha \end{psmallmatrix}):=(\lambda)_s \bar\alpha^{-\lambda -s} \beta^s$, then,  $v_s \in H_{hol}^2(SU(1,1), \tau_{\lambda})[\tau_{\lambda+2s}]$.  An orthonormal basis for $H_{hol}^2(SU(1,1),\tau_\lambda)$  is $\{v_s, s=0,1,\dots \}$.  For $H_{hol}^2(SU(1,1),  \tau_{\lambda'})$ we denote the corresponding basis for $v_t^\prime, t=0,1,\dots$.

 A computation in the unit disc,  for $\lambda, \lambda' \geq 2$, yields

 $H_{hol}^2(SU(1,1), \tau_\lambda)\otimes  H_{hol}^2(SU(1,1), \tau_{\lambda'})[\tau_{\lambda +\lambda'+2n}]\cap \mathcal L_{\C, SU(1,1)} $

\phantom{cccccccccccccc} $=\C \, \Phi(1)(\cdot)$, where, for $z\in \C$,

  \phantom{xxxxxx} $\Phi(z)(\cdot)=  z\sum_{0\leq s \leq n } (-1)^s  \binom{n}{s} \frac{(\lambda -1)!} {(\lambda-1 +s)!} \,\,\frac{(\lambda^\prime-1)!}{(\lambda^\prime -1+n-s)!} v_s\otimes v_{n-s}^\prime(\cdot) $ .

  Therefore, the associated holographic operator

  \noindent
  $T_\Phi :  H_{hol}^2(SU(1,1), \tau_{\lambda+\lambda'+2n}) \rightarrow H_{hol}^2(SU(1,1), \tau_\lambda)\otimes  H_{hol}^2(SU(1,1), \tau_{\lambda'})$  is the operator
 that maps $g\in H_{hol}^2(SU(1,1), \tau_{\lambda+\lambda'+2n})$ to
  \begin{multline*}
 T_\Phi(g )( x_1(a) x_2(b) (k_1, k_2)) \\ =\tau_\lambda \otimes \tau_{\lambda^\prime}(k_1,k_2)\big(\Phi(g(x_1(a))\big)(x_2(b))\\
 = \tau_{\lambda}(k_1^{-1}) \tau_{\lambda'}(k_2^{-1})\,\,
  g(x_1(a))  \\
 \times \sum_{0\leq s \leq n } (-1)^s  \binom{n}{s} \frac{(\lambda -1)!} {(\lambda-1 +s)!} \,\,\frac{(\lambda^\prime-1)!}{(\lambda^\prime -1+n-s)!} v_s(x_2(b))  v_{n-s}^\prime (x_2(b)^{*-1}). \end{multline*}

   \end{examp}

 \begin{rmk} For an arbitrary symmetric pair $(G,H)$, arbitrary $H^2(H,\sigma)$,  $H^2(G,\tau)$,  and, for $  \Phi \in  Hom_L(Z, \mathbf H^2(H_0 \times_\tau W))$, the linear map $T_\Phi$  defines a  $H$-map,  $T_\Phi \in Hom_H (   H^2(H\times_\sigma Z) , C^\infty (G \times_\tau W)) $. If we could show $D_{Schmid}(K_{T_\Phi}(h,\cdot))=0$ and $\int_G \Vert T_\Phi(g) \Vert^2 dg <\infty$ for all $g \in H_{hol}^2(H,\sigma)$ we would obtain holographic operators. By means of the analysis of leading exponents of a representation (see \cite[Chap IV]{Wa1}) we are able to show that $T_\Phi(g)$ is square integrable for Harish-Chandra parameters far away from the walls. The equality $D_{Schmid}(K_{T_\Phi}(h,\cdot))=0$ would follows if we knew a Hartog's Theorem for the Schmid operator. We refer to Hartog's Theorem as the Theorem that shows: a function is holomorphic if and only if it is holomorphic in each variable.
 \end{rmk}
\subsubsection{A result of  Kitagawa on holographic operators}  Recently, some remarkable new results have been obtained by Kitagawa, as follows: Let $(L_\cdot^G, V^G:=H^2(G,\tau))$ be a $H$-admissible Discrete Series representation. Let $V^H:=H^2(H,\sigma)$ be an irreducible factor of $res_H(V^G)$.

The $H$-admissibility hypothesis yields:\\
For any intertwining map $ T:(V^G)^{\infty}[V^H] \rightarrow  (V^G)^{\infty}$, in particular,  for each $R\in \mathcal U(\g)^H$, the either the map $T$ or  $L_R^G :(V^G)^{\infty}[V^H] \rightarrow  (V^G)^{\infty}$,   extents to a continuous linear endomorphism for $V^G[V^H]$.

In fact, we write $V^G[V^H]=M_1 \oplus \cdots \oplus M_k$, where $M_i$ are irreducible $H$-factors, thus $L_R^G$ maps the $\mathcal U(\h)$-irreducible representation $(M_i)_{L-fin}$ into the unitary representation $V^G[V^H]$, we now apply \cite[Lemma 8.6.7]{Wal} and obtain $T$ or $L_R^G$ extents to a continuous linear map from $M_i$ into $V^G[V^H]$, whence, the claim follows.

A consequence of this that for any $T\in Hom_H(V^H, V^G)$ and for any $R\in \mathcal U(\g)^H$, the composition map $L_R^G\, T$ from $(V^H)_{L-fin} \rightarrow V^G$  extends to a holographic operator.  Whence,  $\mathcal U(\g)^H$ also acts in $Hom_H(V^H,V^G)$ by the rule $R\cdot T=L_R^G \, T$.  We note that we have verified the inclusion $Hom_H(V^H,V^G)\subseteq Hom_H((V^H)^{\infty},(V^G)^{H-\infty})$ becomes an equality.  In \cite[Theorem 5.26]{Ki}, appealing to a particular Zuckerman functor's realization of the space of $K$-finite vectors of a Discrete Series representation,   it is shown that the action of $\mathcal U(\g)^H$  in $Hom_H(V^H,V^G) $  is  irreducible. Thus, we obtained the observation: once we know one nonzero holographic operator $T$ from $V^H$ into $V^G$, all the others have the shape $L_R^G \, T, R\in \mathcal U(\g)^H$.

   \section{Further perspectives} The aim of this section is to present an overview of some of the results that will be shown in Part II.  A major goal of branching laws is  to understand the
  structure of symmetry   breaking (holographic) operators for a general pair $(G,H)$
  and $(\pi, V)$  a $H$-admissible Discrete Series. For this, we present some considerations on the structure of symmetry breaking operators. Whence,  we fix  a symmetry breaking operator $S:H^2(G,\tau)\mapsto H^2(H,\sigma)$, represented by the kernel $K_S :G\times H \rightarrow Hom_\C( W, Z)$ and recall  the subspace $Z_S:=Image(Z\ni z \mapsto K_{S^*}(e, \cdot) (z)=K_S(\cdot, e)^\star z \in H^2(G,\tau))$ is a $L$-irreducible subspace contained in $\mathcal L_{W,H}$. Thus, either $Z_S \cap \mathcal U(\h_0)W=\{0\}$ or $Z_S \subset \mathcal U(\h_0)W$. In \cite{OV3}, it is shown that in the later case $S$ is represented by a normal derivative differential operator, whereas in the former case $S$ is represented by a differential operator that never will be a normal derivative differential operator. To be more precise, we write

    $ (\ddag)\,\,\,  Hom_H(V,V_\sigma) =\{ S: Z_S \subset \mathcal U(\h_0)W\}\cup \{ S: Z_S \cap \mathcal U(\h_0)W=\{0\}\}$.

  After ignoring the zero operator,  this is a disjoint union,  the first subset is the one that contains the symmetry breaking operators that are represented by normal derivatives operators and the second subset is its complement.   Roughly speaking,    $ \mathcal L_{W,H}\cap \mathcal U(\h_0)W  $ measures the "quantity" of symmetry breaking operators   represented by normal derivatives operators, whereas, $ \mathcal L_{W,H} \backslash (\mathcal L_{W,H}\cap \mathcal U(\h_0)W)$,  "measures"  the totality  of "non normal" derivative symmetry breaking operators. We do not know if for given $\sigma, \tau$ both subsets in $ (\ddag)$ might be nonempty. That is, we do not know whether every nonzero element in $Hom_H(V_\tau, V_\sigma)$  is represented by a normal differential operator or not

  Obviously,  if  $H^2(H,\sigma)$ has multiplicity one in $H^2(G,\tau)$, the symmetry breaking operators in $Hom_H(V,V_\sigma)$ are either normal derivatives differential operators or not.

Henceforth,  our hypothesis is: $H/L \rightarrow G/K$ is a holomorphic immersion and $(\pi,V)$ is $H$-admissible holomorphic representation of lowest $K$-type $(\tau,W)$.

  For a scalar holomorphic  representation $\tau$ and $(\g,\h)$ a real rank one pair,  in \cite[Table 4.1, Theorem 5.1]{KP2} it is shown: For the pairs $(\mathfrak{su}(m,n), \mathfrak s(\mathfrak{u} (m,n-1) + \mathfrak u(1)))$, $(\mathfrak{so}(2m,2), \mathfrak u(m,1))$,    $(\mathfrak{so}^\star(2n),  \mathfrak{so}(2) +\mathfrak{so}^\star(2n-2)) $   every      symmetry breaking operators is represented by normal derivative operators. Whereas, for the pairs $ (\mathfrak{so} (m,2), \mathfrak{so}(m-1, 2))$,  \\ $(\mathfrak{su}(m,1)\oplus \mathfrak{su}(m,1),    \mathfrak{su} (m,1) )$,  $(\mathfrak{sp} (m,1), \mathfrak{sp}(m+1,\mathbb R)\oplus \mathfrak{sp}(1, \mathbb R))$ there exists   symmetry breaking operators not represented by normal derivative operator's. We can provide a new  proof of their result.

  \smallskip

   Due that the Lie algebra $\p^-$  is anabelian Lie algebra, the symmetrization from $S(\p^-)$ onto $\mathcal U(\p^-)$ becomes an associative algebra isomorphism. Whence, after we recall \ref{eq:1},\, $\mathcal U(\p^-)\otimes W$ may be written as a  graded vector space, the $n^{th}$ subspace been $\mathcal V^{(n)}:=lin.span_\C \{L_{x_1\cdots x_n}^\tau (w)$, $ x_j \in \p^- \backslash \{0\}, w\in W\}$.

   We will compute    examples so that the following inclusions  might be proper,
  $ \{0\}\subset   \mathcal L_{W,H} \cap \mathcal V^{(1)} \cap \mathcal U(\h_0)W \subset  \mathcal L_{W,H}\cap \mathcal V^{(1)}  $. Hence, since for a symmetry breaking operator $S$ so that $K_S(\cdot,o)^\star (Z) \subset \mathcal L_{W,H}\cap \mathcal V^{(1)}$ we know it is always represented by a differential operator, and   $K_S(\cdot,o)^\star (Z) \subset \mathcal U(\h_0)W \cap \mathcal V^{(1)}$ is equivalent to being a normal derivative operator. We have that there are symmetry breaking differential operators which are not represented by normal derivative even though there are non trivial normal derivative symmetry breaking operators.  The next Proposition analyzes the question of when every first order symmetry breaking operator is represented by a normal derivative operator.  \begin{prop} When $\g$ is simple and  is not isomorphic  to $\mathfrak{su}(m,n)$, $m\geq 2, n\geq 2$, then,  $\mathcal U(\h_0)W \cap \mathcal L_{W,H} \cap  \mathcal V^{(1)} =  \mathcal L_{W,H}\cap \mathcal V^{(1)}$ if and only if $\tau$ is a one dimensional representation. \end{prop}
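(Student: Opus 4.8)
The plan is to work entirely in the Harish-Chandra realization of the holomorphic Discrete Series, using the identification $(V_\tau)_{K-fin}\equiv S(\p^-)\otimes W$ from \eqref{eq:1} and the explicit $\g$-action of \cite{JV} recalled in \ref{sub:jv}. By the grading discussion preceding the Proposition, $\mathcal V^{(1)}=\p^-\otimes W$ realized inside $V_\tau$ via $Y\otimes w\mapsto L_Y^\tau w$, and by \ref{sub:jv} one has $L_Y^\tau w(v)=\tau([Y,v])w$ for $Y\in\p^-$, $v\in\p^+$, $w\in W$; thus $\mathcal V^{(1)}$ is the $L$-module generated by the "first derivatives" and $\mathcal V^{(1)}\cong \p^-\otimes W$ as $K$-modules (and as $L$-modules). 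First I would pin down the three pieces: $\mathcal L_{W,H}^c\cap\mathcal V^{(1)}$ corresponds, under $D_0$ of \ref{sub:jv}, to $\p_{\h_0}^-\otimes W$ inside $\p^-\otimes W$, i.e. to those degree-one polynomials killed by $\delta(x)$ for all $x\in\p_\h^+$; and $\mathcal U(\h_0)W\cap\mathcal V^{(1)}$ corresponds (via $D_1$, again degree-preserving by Proposition~\ref{prop:D}) to the image of $\p_{\h_0}^-\otimes W$ under $D_1^{-1}$ composed with the projection to degree one — concretely, $\{L_Y^\tau w : Y\in\p_{\h_0}^-, w\in W\}$. So the equality to be characterized becomes: the span of $\{L_Y^\tau w : Y\in\p_{\h_0}^-,\, w\in W\}$ equals all of $\mathcal P^1(\p_{\h_0}^+,W)$, the degree-one $W$-valued polynomials on $\p_{\h_0}^+$.

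Next I would make this representation-theoretic. The map $\p_{\h_0}^-\otimes W\to \mathcal P^1(\p_{\h_0}^+,W)$, $Y\otimes w\mapsto(v\mapsto \tau([Y,v])w)$, is an $L$-equivariant map between $L$-modules that are abstractly isomorphic (both $\cong\p_{\h_0}^+{}^\vee\otimes W$ as $L$-modules, via the Killing form identification $\p_{\h_0}^-\cong(\p_{\h_0}^+)^\vee$). The obstruction to surjectivity lives in $\mathrm{End}_L(\p_{\h_0}^-\otimes W)$: the map fails to be onto precisely when some isotypic component of $\p_{\h_0}^-\otimes W=(\p_{\h_0}^+)^\vee\otimes W$ is sent to zero, equivalently when the bracket map $\p_{\h_0}^-\otimes\p_{\h_0}^+\to\mathbf{1}$ composed appropriately annihilates a summand of $(\p_{\h_0}^+)^\vee\otimes W$. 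When $\tau$ is one-dimensional, $W=\mathbf{1}$, $(\p_{\h_0}^+)^\vee\otimes W=(\p_{\h_0}^+)^\vee$ is multiplicity-one-governed and the map $Y\mapsto(v\mapsto B([Y,v]))$ is a nonzero $L$-map between isomorphic $L$-modules which (using that the Killing form is nondegenerate on $\p_{\h_0}$ and the bracket $\p^-\times\p^+\to\k_\C$ is nondegenerate) is forced to be an isomorphism; hence equality holds. For the converse, when $\tau$ is not one-dimensional I would produce an explicit $L$-type in $\mathcal L_{W,H}^c\cap\mathcal V^{(1)}=\mathcal P^1(\p_{\h_0}^+,W)$ not hit by the $\mathcal U(\h_0)$-action, using the weight combinatorics already invoked in the proof of Proposition~\ref{prop:des}: the highest weight of a component of $(\p_{\h_0}^+)^\vee\otimes W$ is $-\beta^\vee$-type weight plus the highest weight of $\tau$, and the requirement that this be in the image forces the $W$-direction to be "aligned" with the $\p_{\h_0}^-$-direction, which fails once $\tau$ carries a nontrivial weight; the hypothesis $\g\not\cong\mathfrak{su}(m,n)$ with $m,n\ge2$ is exactly what guarantees $\p_{\h_0}^+$ is $L$-irreducible (the pair is "of tube type / rank-one-like" in the relevant sense), so there is a single governing $L$-type and the failure is detected on the nose.

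The main obstacle I expect is the converse direction: making rigorous the claim that for $\dim\tau>1$ one genuinely has a proper inclusion $\mathcal U(\h_0)W\cap\mathcal L_{W,H}^c\cap\mathcal V^{(1)}\subsetneq\mathcal L_{W,H}^c\cap\mathcal V^{(1)}$, rather than merely an abstract isomorphism of $L$-modules that could still fail to be realized by the specific bracket map. This requires a careful decomposition of $\p_{\h_0}^-\otimes W$ into $L$-irreducibles and identifying which summand (the "Cartan component" versus the others) is the image of $D_1$ restricted to degree one — equivalently computing the kernel of $Y\otimes w\mapsto L_Y^\tau w$ on $\p_{\h_0}^-\otimes W$. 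The case hypothesis on $\g$ is precisely what reduces this to a finite check: for the rank-one pairs $(\g,\h)$ in the relevant list $\p_{\h_0}^+$ is $L$-irreducible, so $\p_{\h_0}^-\otimes W$ has at most as many components as $W$ does "extreme" constituents, and one checks summand by summand. I would organize the converse as: (i) reduce to the case $\p_{\h_0}^+$ $L$-irreducible; (ii) decompose $\p_{\h_0}^-\otimes W$; (iii) show the degree-one part of $\mathcal U(\h_0)W$ is exactly the Cartan (PRV-type highest) component; (iv) observe that $\mathcal P^1(\p_{\h_0}^+,W)\cong\p_{\h_0}^-\otimes W$ entirely, so the two coincide iff $\p_{\h_0}^-\otimes W$ is itself $L$-irreducible, which happens iff $W$ is one-dimensional.
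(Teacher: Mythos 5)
The proposition you are addressing is stated in the Part II overview of this paper without proof, so there is no in-paper argument to compare with; judged on its own terms, your proposal has a genuine gap at its central step. Your identification of the three spaces is fine: $\mathcal L_{W,H}\cap\mathcal V^{(1)}=\mathcal P^1(\p_{\h_0}^+,W)$, and the degree-one part of $\mathcal U(\h_0)W$ is $A:=\mathrm{span}\{L_Y^\tau w:\ Y\in\p_{\h_0}^-,\ w\in W\}$, where $L_Y^\tau w$ is the polynomial $\p^+\ni v\mapsto\tau([Y,v])w$. But you then silently replace the condition ``$A=\mathcal P^1(\p_{\h_0}^+,W)$ as subspaces of $\mathcal P^1(\p^+,W)$'' by the condition ``the map $\p_{\h_0}^-\otimes W\to\mathcal P^1(\p_{\h_0}^+,W)$, $Y\otimes w\mapsto(\p_{\h_0}^+\ni v\mapsto\tau([Y,v])w)$, is surjective,'' i.e.\ you restrict the variable $v$ to $\p_{\h_0}^+$ before asking the question. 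These are not equivalent: the restricted map is the degree-one piece of $r_0^c\circ D_1$, and since $\ker r_0=\mathrm{Cl}(\mathcal U(\h_0)W)^\perp$ (see the proof of Proposition~\ref{prop:qisd} and \cite{OV3}), it is injective on $\p_{\h_0}^-\otimes W$, hence bijective onto $\mathcal P^1(\p_{\h_0}^+,W)$ for \emph{every} $\tau$. So the ``obstruction to surjectivity'' you analyze vanishes identically, and your scheme (i)--(iv) would prove the equality unconditionally, contradicting the statement; in particular the final step ``the two coincide iff $\p_{\h_0}^-\otimes W$ is $L$-irreducible iff $\dim W=1$'' is a non sequitur. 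What your argument never touches is the $\p_\h^+$-dependence of $L_Y^\tau w$: since $\dim A=\dim\mathcal P^1(\p_{\h_0}^+,W)$, the equality in the proposition holds iff $A\subset\mathcal P(\p_{\h_0}^+,W)$, i.e.\ iff $\tau([Y,x])w=0$ for all $Y\in\p_{\h_0}^-$, $x\in\p_\h^+$, $w\in W$; equivalently, iff $d\tau$ annihilates the bracket space $[\p_{\h_0}^-,\p_\h^+]$.

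That bracket condition is where both implications and the hypothesis on $\g$ actually live. Because the embedding $H/L\to G/K$ is holomorphic, $\sigma$ fixes the central element of $\k$ giving the complex structure, so $[\p_{\h_0}^-,\p_\h^+]\subset(\q\cap\k)_\C\subset[\k_\C,\k_\C]$; hence a one-dimensional $\tau$ kills it and the equality follows -- this, not bijectivity of the restricted map, is the correct ``if'' direction. Conversely, $\ker d\tau$ is an ideal of $\k_\C$, so vanishing of $\tau$ on $[\p_{\h_0}^-,\p_\h^+]$ (a nonzero subspace once $\g$ is simple and $\h$ is a proper noncompact symmetric subalgebra -- a point that needs checking) forces vanishing on the ideal it generates; the hypothesis that $\g$ is simple and not $\mathfrak{su}(m,n)$, $m,n\geq 2$, enters exactly to guarantee that $[\k,\k]$ is simple, so this ideal is all of $[\k_\C,\k_\C]$ and the irreducible $\tau$ must be a character. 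It has nothing to do with $\p_{\h_0}^+$ being $L$-irreducible or the pair being of rank one (the proposition concerns all holomorphic symmetric pairs with such $\g$), and the excluded case $\mathfrak{su}(m,n)$ is precisely the one where $[\k,\k]$ has two simple factors, which is why the paper announces only ``a similar equivalence'' there. An $L$-module comparison of two abstractly isomorphic modules, or a decomposition of $\p_{\h_0}^-\otimes W$ into a Cartan component and its complement, cannot by itself detect whether the specific subspace $A$ has components along $(\p_\h^+)^\vee\otimes W$, so the converse as you structured it cannot be repaired without bringing in the bracket $[\p_{\h_0}^-,\p_\h^+]$ and the ideal structure of $\k$.
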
 We also have a similar equivalence for $\g\equiv \mathfrak{su}(m,n)$, \, $m\geq 2, n\geq 2$.

  \medskip
  The next statement, shows that under certain hypothesis we find first order symmetry breaking operators represented by normal derivatives as well as that there exists first order symmetry breaking operators which are not represented by normal derivative. \begin{prop} For a pair $(\g,\h)$ so that both $\h,\h_0$ are isomorphic to the product of one noncompact simple Lie algebra times a compact Lie algebra, and, $\tau$ is not a scalar representation, the representation of $L$ in
  $\mathcal U(\h_0)W \cap\mathcal L_{W,H} \cap    \mathcal V^{(1)}$ is irreducible and not a scalar representation.  Moreover, $\mathcal U(\h_0)W \cap \mathcal L_{W,H} \cap    \mathcal V^{(1)}$ is a proper subspace of $\mathcal L_{W,H} \cap   \mathcal V^{(1)}$. \end{prop}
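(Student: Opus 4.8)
The plan is to pass to the Harish–Chandra/polynomial model of Section~\ref{sec:duality} and reduce everything to a statement about finite–dimensional $L$–modules living in the degree–one piece $\mathcal V^{(1)}=\mathcal P^1(\p^+,W)$ of $V_{K-fin}=\mathcal P(\p^+,W)\equiv S(\p^-)\otimes W$. Recall from \ref{eq:1} and the Jakobsen–Vergne formulas (\ref{sub:jv}) that for $y\in\p^-$ the operator $L_y^\tau$ raises polynomial degree by exactly one and $L_y^\tau w$ is the polynomial $v\mapsto\tau([y,v])w$; since $\mathcal U(\h_0)W$ inherits the degree grading (Proposition~\ref{prop:D}), one gets
$\mathcal U(\h_0)W\cap\mathcal V^{(1)}=\mathrm{span}_\C\{\,v\mapsto\tau([y,v])w:\ y\in\p_{\h_0}^-,\ w\in W\,\}\cong\p_{\h_0}^-\otimes W$ as $L$–modules, while $\mathcal L_{W,H}^c\cap\mathcal V^{(1)}=\mathcal P^1(\p_{\h_0}^+,W)$ is the space of linear maps $\p^+\to W$ vanishing on $\p_\h^+$, also $\cong\p_{\h_0}^-\otimes W$ (via the Killing duality $\p_{\h_0}^-\cong(\p_{\h_0}^+)^*$). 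Here I use $\p^+=\p_\h^+\oplus\p_{\h_0}^+$, $\p^-=\p_\h^-\oplus\p_{\h_0}^-$, and, writing $\mathfrak q=\g^{-\sigma}$, $\q_\k=\k\cap\mathfrak q$, the facts that holomorphicity forces $\z(\k)\subset\l$ (so $\q_{\k,\C}\subset[\k,\k]_\C$) and that $[\p_{\h_0}^-,\p_\h^+]\subset\q_{\k,\C}$, all from the $\sigma$–eigenspace bookkeeping.

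Next I set up the comparison. Decomposing $[y,v]$ into its $\z(\k)$–part (a multiple of $B(y,v)$, since that is the unique $K$–invariant pairing $\p^-\times\p^+\to\C$) and its $[\k,\k]_\C$–part, and using $\tau|_{\z(\k)}=\lambda_0\,\Id_W$ with $\lambda_0\ne0$, one finds: under the identification $\mathcal V^{(1)}\cong\p^-\otimes W$, $y\otimes w\mapsto(v\mapsto B(y,v)w)$, the duality isomorphism $D$ of Proposition~\ref{prop:D} is the restriction to $\p_{\h_0}^-\otimes W$ of an $L$–endomorphism $\phi=\lambda_0 c'\,\Id+\psi$ of $\p^-\otimes W$, $\mathcal U(\h_0)W\cap\mathcal V^{(1)}=\phi(\p_{\h_0}^-\otimes W)$, and hence
\[
\mathcal U(\h_0)W\cap\mathcal L_{W,H}\cap\mathcal V^{(1)}=\phi(\ker\Psi),\qquad \Psi(y\otimes w)=\textstyle\sum_\alpha x_\alpha^-\otimes\tau([y,x_\alpha^+])w,
\]
where $\Psi\colon\p_{\h_0}^-\otimes W\to\p_\h^-\otimes W$, $\{x_\alpha^+\}$ is a basis of $\p_\h^+$ and $\{x_\alpha^-\}\subset\p_\h^-$ its $B$–dual basis. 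Since $\phi$ is an $L$–isomorphism, $\mathcal U(\h_0)W\cap\mathcal L_{W,H}\cap\mathcal V^{(1)}\cong\ker\Psi$ as $L$–modules and the properness assertion is equivalent to $\Psi\ne0$.

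For properness I show $\Psi\ne0$. If $[\p_{\h_0}^-,\p_\h^+]=0$ then, taking complex conjugates, $[\p_{\h_0}^+,\p_\h^-]=0$ too; together with $[\p_\h^+,\p_\h^-]\subset\l_\C$, $[\p_{\h_0}^+,\p_{\h_0}^-]\subset\l_\C$ and $[\p^\pm,\p^\pm]=0$ this forces $\k_\C=[\p^+,\p^-]\subset\l_\C$, i.e. $K\subset H$, which would make $\p_{\h_0}$ an ideal direction and $\h_0$ compact — impossible under the hypothesis on $\h_0$. So $[\p_{\h_0}^-,\p_\h^+]\ne0$. Since $\tau$ is non–scalar, $[\k,\k]$ acts nontrivially on $W$; when $[\k,\k]$ is simple, $\q_\k\ne0$ generates $[\k,\k]$ as an ideal (because $\q_\k+[\q_\k,\q_\k]$ is $\mathrm{ad}(\k)$–stable), so $\tau$ is nonzero on $\q_{\k,\C}$ and, the $\mathrm{ad}$–closure of the $L$–submodule $[\p_{\h_0}^-,\p_\h^+]$ being all of $\q_{\k,\C}$, also nonzero on $[\p_{\h_0}^-,\p_\h^+]$; in the remaining case $\g\cong\mathfrak{su}(p,q)$ (two simple ideals in $[\k,\k]$) one checks directly, as for the companion proposition, that under the hypothesis on $(\h,\h_0)$ the algebra $\q_\k$ meets both ideals. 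Hence $\Psi\ne0$, so $\ker\Psi\subsetneq\p_{\h_0}^-\otimes W$ and $\mathcal U(\h_0)W\cap\mathcal L_{W,H}\cap\mathcal V^{(1)}\subsetneq\mathcal L_{W,H}\cap\mathcal V^{(1)}$.

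It remains to prove that $\ker\Psi$ is a single irreducible, non–scalar $L$–module; this is the main obstacle. The point of the hypothesis is that, because $\h$ and $\h_0$ are each a noncompact simple (Hermitian) algebra times a compact one, $\p_\h^\pm$ and $\p_{\h_0}^\pm$ are irreducible $L$–modules and $\Psi_{\h_0}$ has a single noncompact simple root, so the $L$–module $\p_{\h_0}^-\otimes W$ and the map $\Psi$ are controllable. Following the normal–derivative/root bookkeeping of Proposition~\ref{prop:des} and \cite[Lemma 4.5]{OV3}: the constituents of $\mathcal L_{W,H}\cap\mathcal V^{(1)}$ are the lowest $L$–types of the level–one irreducible $H$–summands of $V_\tau$, those lying in $\mathcal U(\h_0)W$ are exactly the ones whose symmetry breaking operator is a first–order normal derivative, and a weight computation with the explicit root data singles out exactly one of them — so $\Psi$ is injective on the span of all but one of the $L$–constituents of $\p_{\h_0}^-\otimes W$, the surviving constituent being detected by the position of the highest root of $\p_{\h_0}^+$ relative to the noncompact roots occurring in $\p_\h^+$. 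That constituent is irreducible, and non–scalar because its extreme weight carries the nonzero $[\l,\l]$–weight of $\p_{\h_0}^-$ and because $\tau$ non–scalar makes $W$ nontrivial on $[\l,\l]$. Transporting through the $L$–isomorphism $\phi$ gives the Proposition; the case $\g\cong\mathfrak{su}(m,n)$, $m,n\ge 2$, again needs the separate treatment already used for the companion statement.
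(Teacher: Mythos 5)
There is nothing in the paper to compare your argument with: this Proposition sits in Section~5 (``Further perspectives''), which is explicitly an overview of results whose proofs are deferred to Part~II, so the paper offers no proof of this statement. Judged on its own merits, your reduction is sound and is a reasonable framework: identifying $\mathcal V^{(1)}$ with $\mathcal P^1(\p^+,W)\cong\p^-\otimes W$, using the Jakobsen--Vergne formula $L_y^\tau w=(v\mapsto\tau([y,v])w)$ and the degree-preserving isomorphism of \ref{sub:D1iso}/Proposition~\ref{prop:D}, one indeed gets $\mathcal U(\h_0)W\cap\mathcal V^{(1)}\cong\p_{\h_0}^-\otimes W$, $\mathcal L_{W,H}\cap\mathcal V^{(1)}\cong\p_{\h_0}^-\otimes W$, and the triple intersection isomorphic as an $L$-module to $\ker\Psi$, so that properness is equivalent to $\Psi\neq0$ and the Proposition is equivalent to a statement about $\ker\Psi$.

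The genuine gap is that the heart of the Proposition --- irreducibility (and non-scalarity) of the $L$-module $\ker\Psi$ --- is asserted rather than proved. The sentence ``a weight computation with the explicit root data singles out exactly one of them'' is precisely the content that has to be established under the hypothesis that $\h$ and $\h_0$ are noncompact-simple-times-compact, and no such computation is carried out; moreover your phrasing implicitly assumes that $\ker\Psi$ is a direct sum of full $L$-constituents of a chosen decomposition of $\p_{\h_0}^-\otimes W$, which is not automatic when $L$-types occur there with multiplicity greater than one (then the kernel of an $L$-map can be a ``diagonal'' subspace of an isotypic component, and ``exactly one constituent dies'' needs a multiplicity argument or an explicit identification of $\ker\Psi$ with a specific irreducible such as a Cartan component). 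The non-scalarity claim is likewise only gestured at. Several auxiliary steps in the properness argument are also left unjustified or are stated too broadly: the claim $\z(\k)\subset\l$ fails for non-simple $\g$ (e.g.\ $\g=\mathfrak{su}(1,1)\oplus\mathfrak{su}(1,1)$ with the swap involution, which does satisfy the hypothesis on $(\h,\h_0)$, although there $\tau$ is forced to be scalar); the assertion that the $\l$-module generated by $[\p_{\h_0}^-,\p_\h^+]$ is all of $\q_{\k,\C}$ is not proved, and even granting $\tau\neq0$ on $\q_{\k,\C}$ this does not by itself give $\tau\neq0$ on a proper $L$-submodule; and the ``one checks directly'' cases ($\mathfrak{su}(m,n)$, and the passage from $\g$ simple to the general semisimple situation) are not checked. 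So while the reduction to $\Psi$ is a good start, the main claims of the Proposition remain unproved in your proposal.
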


  \medskip

  The following fact shows that, sometimes,  every second order symmetry breaking operator is represented via normal derivative.
   For a scalar representation $(\tau,W)$, we claim: \begin{prop} $\mathcal U(\h_0)W \cap \mathcal L_{W,H} \cap    \mathcal V^{(2)}  =  \mathcal L_{W,H}\cap \mathcal V^{(2)}$ if and only if $[[\p_{\h_0}^+, \p_{\h}^-],\p_{\h_0}^+]=\{0\}$.  \end{prop}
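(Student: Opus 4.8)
Since $\tau$ is scalar, we have the identification $\mathcal{V}^{(2)} = \{ L^\tau_{xy}(w_0) : x,y \in \p^- \}$ spanned by the image of $S^2(\p^-)$ under $D \otimes w_0 \mapsto L^\tau_D w_0$, and this is a $K$-isomorphism onto $S^2(\p^-)$. Under this isomorphism $\mathcal{L}_{W,H} \cap \mathcal{V}^{(2)}$ corresponds to the degree-$2$ part of $\mathcal{P}(\p_{\h_0}^+, W) \cong S^2(\p_{\h_0}^-)$ (using the computation $\mathcal{L}_{W,H}^c = \mathcal{P}(\p_{\h_0}^+, W)$ from Section~\ref{sub:jv}), while $\mathcal{U}(\h_0)W \cap \mathcal{V}^{(2)}$ corresponds to $S^2(\p_{\h_0}^-) \cap (\mathcal{U}(\h_0)w_0)^{(2)}$. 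The plan is to compute the action of $\p^-$ on $w_0$ explicitly in degree $2$ using the Jakobsen--Vargas formulas recalled in \ref{sub:jv}, and to identify exactly which quadratic polynomials in $\p_{\h_0}^+$ lie in the $\mathcal{U}(\h_0)$-submodule generated by $w_0$.

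**Key computation.** For $x \in \p^-$ and $p \in \mathcal{P}(\p^+,W)$ the formula is $L^\tau_x(p)(v) = \tau([x,v])p(v) - \tfrac12 (\delta([[x,v],v])p)(v)$; since $\tau$ is scalar, $\tau([x,v])$ is a scalar-valued linear form in $v$. Applying $L^\tau_x$ to the constant $w_0$ produces a linear polynomial $v \mapsto \tau([x,v])w_0$ (the $\delta$-term kills constants); applying $L^\tau_y \circ L^\tau_x$ to $w_0$ and keeping the degree-$2$ part, one gets a sum of $\tau([y,v])\tau([x,v])w_0$ and a term coming from $-\tfrac12 \delta([[y,v],v])$ acting on the linear polynomial $\tau([x,v])w_0$. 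The latter, being a first-order constant-coefficient operator applied to a linear polynomial, contributes only a constant, so the \emph{degree-two} component of $L^\tau_{yx}w_0$ is exactly $v \mapsto \tau([y,v])\tau([x,v])w_0$. Restricting to $x,y \in \p_{\h_0}^-$ and $v \in \p_{\h_0}^+$ gives that $(\mathcal{U}(\h_0)w_0)^{(2)} \cap S^2(\p_{\h_0}^-)$ is spanned by products $\ell_x \ell_y$ of linear forms $\ell_x(v) = \tau([x,v])$ with $x \in \p_{\h_0}^-$, i.e. the full $S^2$ of the span of these forms on $\p_{\h_0}^+$. On the other hand $\mathcal{L}_{W,H}\cap\mathcal{V}^{(2)}$ is \emph{all} of $S^2(\p_{\h_0}^+{}^*)\otimes W$ (all quadratic polynomials). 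The gap between the two is therefore governed by whether the natural map $S^2(\p_{\h_0}^-) \to \mathcal{P}^2(\p_{\h_0}^+, W)$, $x \cdot y \mapsto (v \mapsto \tau([x,v])\tau([y,v])w_0)$, is surjective, equivalently whether its kernel is controlled by the bracket $[[\p_{\h_0}^+,\p_{\h}^-],\p_{\h_0}^+]$.

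**Identifying the obstruction.** The point is to express the failure of the degree-two piece to be a normal derivative as an extra term involving $\p_\h$. When one computes $L^\tau_y L^\tau_x w_0$ for general $x,y\in\p^-$ and then projects back onto $\mathcal{P}(\p_{\h_0}^+,W)$ (i.e. restricts $v$ to $\p_{\h_0}^+$), the cross-terms that would put something outside $\mathcal{U}(\h_0)w_0$ involve $[[\p_{\h_0}^+,\p_\h^-],\p_{\h_0}^+]$: the bracket $[x,v]$ for $x\in\p_{\h_0}^-$, $v\in\p_{\h_0}^+$ lands in $\k$ and then $[[x,v],v]$ re-enters $\p^+$; whether this stays inside $\p_{\h_0}^+$ or picks up a $\p_\h^+$-component is exactly the condition $[[\p_{\h_0}^+,\p_\h^-],\p_{\h_0}^+] \overset{?}{=}\{0\}$ after transposing via the Killing form. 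So the plan is: (i) show that if $[[\p_{\h_0}^+,\p_\h^-],\p_{\h_0}^+]=\{0\}$ then every quadratic element of $\mathcal{P}(\p_{\h_0}^+,W)$ is hit by some element of $\mathcal{U}(\h_0)w_0$, by exhibiting enough products of the forms $\ell_x$; and (ii) conversely, if the bracket is nonzero, produce an explicit quadratic polynomial in $\mathcal{L}_{W,H}\cap\mathcal{V}^{(2)}$ that is orthogonal to $\mathcal{U}(\h_0)W$, using the $\h$-direction that the bracket detects, together with the decomposition $\p = \p_\h \oplus \p_{\h_0}$ and the fact that $\mathcal{U}(\h_0)w_0$ only ever involves brackets within $\h_0$.

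**Main obstacle.** The hard part will be the bookkeeping in step (i)/(ii): precisely matching the span of $\{\ell_x \ell_y : x,y\in\p_{\h_0}^-\}$ inside $S^2$ of linear forms on $\p_{\h_0}^+$ against the full symmetric square, and showing the discrepancy is detected by and only by the bracket $[[\p_{\h_0}^+,\p_\h^-],\p_{\h_0}^+]$ rather than by some other piece of the structure. Concretely one must verify that the obstruction to surjectivity lives entirely in the "$\p_\h$-correction" to the quadratic action and that this correction vanishes identically iff that triple bracket is zero; establishing the \emph{only if} direction (finding the explicit bad polynomial when the bracket is nonzero) is where the real work lies, and I would do it by picking root vectors realizing a nonzero bracket and computing the relevant $L$-covariant pairing against $\mathcal{U}(\h_0)W$ directly.
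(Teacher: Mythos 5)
First, a remark on the comparison you were asked to be measured against: the Proposition you are proving sits in Section 5 (``Further perspectives''), which only announces results whose proofs are deferred to Part II, so the paper contains no proof to compare with. Judged on its own terms, your proposal has a genuine gap, and it is located exactly at the point where the bracket condition is supposed to enter.

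The error is in your key computation of the degree-two part of $L^\tau_y L^\tau_x w_0$. In the Jakobsen--Vergne formula $L_y^\tau(p)(v)=\tau([y,v])p(v)-\tfrac12(\delta([[y,v],v])p)(v)$ the operator $\delta([[y,v],v])$ is \emph{not} a constant-coefficient first-order operator: its direction $[[y,v],v]$ depends quadratically on the evaluation point $v$. Applied to the linear polynomial $\ell_x(v)=\tau([x,v])$ it produces $\tau\bigl([x,[[y,v],v]]\bigr)$, a homogeneous quadratic polynomial, not a constant; indeed $L^\tau_x$ sends homogeneous degree $n$ to homogeneous degree $n+1$, so $L^\tau_{yx}w_0$ is purely quadratic and equals $\bigl(\tau([y,v])\tau([x,v])-\tfrac12\tau([x,[[y,v],v]])\bigr)w_0$. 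If your claim (``contributes only a constant'') were correct, the degree-two piece of $\mathcal U(\h_0)W$ would be spanned by the products $\ell_x\ell_y$ with $x,y\in\p_{\h_0}^-$; but $\ell_x(v)$ is proportional to $B(x,v)$ (a character of $\k$ kills $[\k_\C,\k_\C]$ and $[Z_0,v]=iv$ on $\p^+$), so for $x\in\p_{\h_0}^-$ these linear forms vanish on $\p_\h^+$ and span $(\p_{\h_0}^+)^\ast$, hence their products already exhaust $\mathcal P^2(\p_{\h_0}^+)w_0=\mathcal L_{W,H}\cap\mathcal V^{(2)}$ and the equality of the Proposition would hold \emph{unconditionally}, contradicting the ``only if'' direction. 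For the same reason your reformulation of the problem as surjectivity of $S^2(\p_{\h_0}^-)\to\mathcal P^2(\p_{\h_0}^+,W)$, $x\cdot y\mapsto\ell_x\ell_y w_0$, cannot be where the obstruction lives: that map is always onto, being the Killing-form identification. The true obstruction is the term you discarded: writing $v=v_\h+v_{\h_0}$, $v_\h\in\p_\h^+$, the only $v_\h$-dependent piece of $L^\tau_{yx}w_0$ for $x,y\in\p_{\h_0}^-$ is $-\tfrac12\tau([x,[[y,v_\h],v_\h]])$ (the mixed brackets $[[y,v_\h],v_{\h_0}]$ and $[[y,v_{\h_0}],v_\h]$ lie in $\p_\h^+$ and are killed by $B(x,\cdot)$, while $[[y,v_\h],v_\h]\in\p_{\h_0}^+$ is paired nondegenerately with $x$). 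Since the degree-two parts of $\mathcal U(\h_0)W$ and of $\mathcal L_{W,H}$ have equal dimension, the stated equality holds iff this piece vanishes identically, i.e.\ iff $[[\p_{\h_0}^-,\p_\h^+],\p_\h^+]=\{0\}$, which by $B$-invariance and conjugation is equivalent to $[[\p_{\h_0}^+,\p_\h^-],\p_{\h_0}^+]=\{0\}$. Your third paragraph does gesture at this triple bracket, but it contradicts your own computation, the criterion you propose there (restricting $v$ to $\p_{\h_0}^+$ and testing surjectivity) erases precisely the $v_\h$-dependence that must be tested, and the ``only if'' direction is left entirely unproved; as written, the argument would establish a false strengthening of the Proposition rather than the Proposition itself.
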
 It  follows, via the Poincare-Birkoff-Witt Theorem,  that the equalities
$ \mathcal U(\h_0)W \cap \mathcal V^{(i)}=  \mathcal L_{W,H} \cap   \mathcal V^{(i)}, i=1,2$ yields the equality $\mathcal L_{W,H} =  \mathcal U(\h_0)W$.

\subsection{Examples and comments on Branching Laws}

The concrete problems of Branching Laws entails the use of special functions, Fourier, Laplace y other transforms as we can learn from the work of Kobayashi-Pevzner, Frahm-Oshima and  many other researchers. In the case of working of Branching Laws for Discrete Series representations appears new mathematical objects, like reproducing kernel spaces, Schmid operators as well as the classical Casimir operator. In this note, we essentially have analyzed holomorphic Discrete Series representations, however, there are non-holomorphic Discrete Series with admissible restriction to appropriate subgroups, for references, and examples, we suggest to browse the work of Gross-Wallach,  Kobayashi-Oshima, Orsted-Vargas, Genkai Zhang and many other authors. Many open problems remain, for example, in \cite{OV2}, for a quaternionic Discrete Series representation $\pi^{Sp(1,p+1)}$,  we derive an abstract decomposition for $res_{Sp(1)\times Sp(1,p)}(\pi^{Sp(1,p+1)})$, however, we do not explicitly present such a decomposition. That is, we do not provide equations for isotropic subspaces $\pi^{Sp(1,p+1)}[\pi^{Sp(1)\times Sp(1,p)}]$ as well as for the symmetry (holographic) operators. Other open  problem is to explicit the continuous spectrum for the restriction of a Discrete Series representation, important results on the subsect are found in the work Harris-He-Olafsson and many other authors.

To finish this section, we explicit two examples of admissible restriction. For details, \cite{OV3}. Here, the symmetric pair $(G,H)$  is so that the corresponding pair of Lie algebras is:
$ (\mathfrak{e}_{6(2)}, \mathfrak f_{4(4)})  $.  This is not a holomorphic pair. However, there exists a Borel-de Siebenthal system $\Psi_{BS}$ of positive roots in $\Phi(\mathfrak e_6, \mathfrak t)$ so that a Discrete Series for $E_{6(2)}$ whose Harish-Chandra parameter is dominant with respect to $\Psi_{BS}$ has an admissible restriction to $H\equiv F_{4(4)}$.  In particular, this holds for the quaternionic, that is,  the $SU_2(\alpha_{max})$-finite,  representations $H^2( E_{6(2)}, \pi_{n\frac{\alpha_{max}}{2}+\rho_{SU(6)}}^{SU_2(\alpha_{max})\times SU(6) }), n=1,2,\cdots $.  In order to be more explicit, we fix a compact Cartan subgroup $T\subset K\equiv SU_2(\alpha_{max})\times SU(6)$ so that $U:=T\cap H$ is a compact Cartan subgroup of $ L=K\cap H \equiv SU_2(\alpha_{max})\times Sp(3) $. Then, the    quaternionic and Borel de Siebenthal positive root system $\Psi_{BS}$ for $\Phi(\mathfrak e_6, \mathfrak t)$ is,  after   we write the simple roots   as in Bourbaki,     compact simple roots are $\alpha_1,  \alpha_3, \alpha_4, \alpha_5, \alpha_6$ (They determinate the $A_5$-Dynkin sub-diagram)  and $\alpha_2$ is   noncompact. $\alpha_2$ is adjacent to $-\alpha_{max}$ and to $\alpha_4$.

     The automorphism  $\sigma$  of $\mathfrak g$  acts on the simple roots as follows $$\sigma (\alpha_2)=\alpha_2, \,\,  \sigma (\alpha_1)=\alpha_6, \,\,  \sigma( \alpha_3 )=\alpha_5, \,\, \sigma (\alpha_4)=\alpha_4.$$   The associated pair is $(\mathfrak e_{6(2)}, \mathfrak{sp}(1,3))$. Let $q_\mathfrak u$ denote the restriction map from $\mathfrak t^\star$ into $\mathfrak u^\star$. Then, for $\lambda$ dominant with respect to $\Psi_{BS}$,  the simple roots for $\Psi_{H,\lambda}=\Psi_{\mathfrak f_{4(4)},\lambda}$,  respectively $\Psi_{\mathfrak{sp}(1,3), \lambda},$   are:
\begin{center}
   $\alpha_2, \,\, \alpha_4, \,\,  q_\mathfrak u(\alpha_3)=q_\mathfrak u (\alpha_5), \,\,  q_\mathfrak u(\alpha_1)=q_\mathfrak u (\alpha_6). $

 $  \beta_1=q_\mathfrak u(\alpha_2 + \alpha_4 +\alpha_5)=q_\mathfrak u (\alpha_2 + \alpha_4 +\alpha_3), \,\, \beta_2=q_\mathfrak u(\alpha_1)=q_\mathfrak u (\alpha_6)$,  \\ $\,\,   \beta_3=q_\mathfrak u(\alpha_3)=q_\mathfrak u (\alpha_5), \,\, \beta_4= \alpha_4.   $
\end{center}
Both systems of positive roots,  $\Psi_{\mathfrak f_{4(4)},\lambda}$,    $\Psi_{\mathfrak{sp}(1,3), \lambda} $ satisfies the Borel-de Siebenthal property and they are quaternionic.
 The fundamental weight $\tilde{\Lambda}_1$ associated to $\beta_1$ is equal to $\frac{1}{2} \beta_{max}$. Hence,

 $ \tilde\Lambda_1=\beta_1+\beta_2+  \beta_3 + \frac12 \beta_4=\alpha_2+\frac{3}{2}\alpha_4 + \alpha_3+\alpha_5+\frac{1}{2}(\alpha_1+\alpha_6) $.

$\rho_c= \rho_{SU(6)}+ \rho_{SU_2(\alpha_{max})}=\frac{5}{2}\alpha_1+\frac{8}{2} \alpha_3+\frac{9}{2}  \alpha_4+\frac{8}{2}  \alpha_5+\frac{5}{2}  \alpha_6 +\frac{1}{2} \alpha_{max}$,

 $\rho_n^{\Psi_{BS}}= \frac{11}{2}\alpha_{max}$, $\rho_n^{\Psi_{\mathfrak{sp}(1,3), \lambda}}= \frac{3}{2}\alpha_{max}$, $\rho_{Sp(3)}=3 \beta_2 +5\beta_3+3\beta_4$.

 The highest weight of the lowest $K$-type for Discrete Series of Harish-Chandra parameter $n\frac{\alpha_{max}}{2}+\rho_{SU(6)}$ for $E_{6(2)}, n\geq 1$ is $$\nu_n:=n\frac{\alpha_{max}}{2} +\rho_{SU(6)} +\rho_{n}^{\Psi_{BS}}-\rho_c =(n+10)\frac{\alpha_{max}}{2}.$$

Thus,     the Duality Theorem,   applied to the restriction of the quaternionic representation

\smallskip
\phantom{xxxxxxxxxxxxxx}$H^2( E_{6(2)}, \pi_{\nu_n }^{SU_2(\alpha_{max}) \times SU(6)})$

to $F_{4(4)}$, yields

\begin{multline*} res_{ F_{4(4)}}\big(H^2(E_{6(2)}, \pi_{\nu_n }^{SU_2(\alpha_{max}) \times SU(6)})\big) \\ =\bigoplus_{ m\geq 0 }\,\, H^2(F_{4(4)}, \pi_{ \sigma_{n,m}}^{SU_2(\alpha_{max}) \times Sp(3)}) .
 \end{multline*}

Here, the highest weight $\sigma_{n,m}$ is:  $\sigma_{n,m}=(n+11+m)\frac{\alpha_{max}}{2} +m\tilde\Lambda_1 $.

However, this is in some sense an abstract decomposition, owing to we do not  provide either  the equations  or a description of each isotopic component $$ H^2(E_{6(2)}, \pi_{n\frac{\alpha_{max}}{2}+\rho_{SU(6)} }^{SU_2(\alpha_{max})\times SU(6)})\bigl[ H^2(F_{4(4)}, \pi_{(n+11+m)\frac{\alpha_{max}}{2} +m\tilde\Lambda_1 }^{SU_2(\alpha_{max}) \times Sp(3)})\bigl].$$ Nevertheless,  in \cite[Proposition 6.8]{OV2} we have obtained a formula for the kernel that represents the orthogonal projector onto a given isotypic component, for the precise statement see Remark 2.2. We note that the Harish-Chandra parameter of the irreducible $F_{4(4)}$-factors are dominant with respect to a Borel-de Siebenthal system of positive roots. Moreover, each $F_{4(4)}$-irreducible factor is a generalized quaternionic representation. This ends the first example.

 \medskip

 The second example is on the pair $(\mathfrak{so}(2m,2), \mathfrak{so}(2m,1))$.   We restrict from  $Spin(2m,2), m \geq 2$, to $Spin(2m,1)$. We notice the   isomorphism between $(Spin(4,2), Spin(4,1))$  and  the pair $(SU(2,2),Sp(1,1))$. In this setting,   $K=Spin(2m)\times Z_K$, $L=Spin(2m)$, $ Z_K\equiv \mathbb T$.  Obviously, we may conclude that any irreducible representation of $K$ is irreducible when restricted to $L$. In this case $H_0 \equiv Spin(2m,1)$, and (for $m=2$,  $H_0 \equiv Sp(1,1)$). We always have: the representation $\mathbf H^2(H_0,\tau)$ is irreducible. Therefore, the duality theorem together with that  any irreducible representation for $    Spin(2m,1) $ is $L=Spin(2m)$-multiplicity free   \cite[page 11]{Th}, let us to obtain:

 \smallskip
  {\it Any   Discrete Series representation for $  Spin(2m,2)  $, with a admissible restriction to  $    Spin(2m,1) $,  is a  multiplicity free representation.}
  \smallskip

We fix a maximal torus $T$ for $K$, so that $U:=L\cap T$ is a maximal torus for $L$. Then, there exists a orthogonal basis $\epsilon_1, \dots, \epsilon_m, \delta$ for $i\mathfrak t^\star$ so that $\z_K^\star = \C \delta$, and $\Phi(\mathfrak{so}(2m,2),\t):=\{ \pm(\epsilon_k \pm \epsilon_s), 1 \leq k < s \leq m \}\cup  \{ \pm (\epsilon_j \pm \delta), 1 \leq j \leq m \}.$
  We consider the systems  of positive roots in $\Phi(\mathfrak{so}(2m,2),\t)$ defined as follows: $\Psi_-=S_{\epsilon_m -\delta}S_{\epsilon_m +\delta}\Psi_+$ and

 \phantom{xxxxx} $\Psi_+= \{ \epsilon_k \pm \epsilon_s, 1 \leq k < s \leq m, (\epsilon_j \pm \delta), 1 \leq j \leq  m       \}$.

 The systems $\Psi_\pm$ are not Borel-de Siebenthal.

 For $ Spin(2m,2), m\geq 3 $    in \cite[Table 2 ]{Va}, \cite{KO} it is verified that any Discrete Series of Harish-Chandra parameter $ \lambda$   dominant with respect to one of the systems $\Psi_{\pm}$    has  admissible restriction to $Spin(2m,1)$ and no other Discrete Series  representation    has admissible restriction to $Spin(2m,1)$.

 We compute, $\rho_n^{\Psi_\pm} =m(\epsilon_1 +\cdots +\epsilon_{m-1} \pm \epsilon_m)$, $\rho_c=(m-1)\epsilon_1 +\cdots -(m-1)\epsilon_m$. Let  $q_\u$ denote the restriction map from $\t^\star$ onto $\u^\star$. For $\lambda \in  i\t^\star$ dominant integral for one of the systems  $\Psi_\pm$, the highest weight Harish-Chandra parameter (infinitesimal character) of the lowest $K$-type for the Discrete Series of $Spin(2m,1)$ attached to $\lambda$ is $
  \lambda +\rho_n^{\Psi_\pm}$. Thus, \\ \phantom{xxxxxxxx}   $(\tau, W)=(\tau_{\lambda +\rho_n^{\Psi_\pm}}^{Spin(2m)\times SO(2)}, V_{\lambda +\rho_n^{\Psi_\pm}}^{Spin(2m)\times SO(2)})$.

  The restriction of $(\tau, W)$ to $L$ is the irreducible the representation \\ \phantom{xxxxxxxxxxxxxx} $(\tau_{q_\u(\lambda +\rho_n^{\Psi_\pm})}^{Spin(2m)},(V_{q_\u(\lambda +\rho_n^{\Psi_\pm})}^{Spin(2m)})$.

  Let $Spec_L(H^2(Spin(2m,1), res_L((\tau,W)))) \subset i\u^\star$ denote the totality  of  the infinitesimal character, dominant with respect to $\Psi_\pm \cap \Phi_c \cap i\u^\star$,  of the irreducible $L$-factors of  $H^2(Spin(2m,1), res_L((\tau,W)))$. In \cite{Th}, we find an algorithm to compute the set $Spec_L(H^2(Spin(2m,1),$ $ res_L((\tau,W))))$. Then, the duality theorem yields

 \begin{multline*} res_{Spin(2m,1)}\big(H^2(Spin(2m,2), \tau_{\lambda +\rho_n^{\Psi_\pm}}^{Spin(2m)\times SO(2)})\big)\\
   =\bigoplus_{\sigma \in Spec_L(H^2(Spin(2m,1), res_L((\tau,W))))}\, $ $H^2(Spin(2m,1), \tau_\sigma^{Spin(2m)}). \end{multline*}

The above formula provides a description of the restriction, however, we have not been able to compute the isotypic component

$$H^2(Spin(2m,2), \tau_{\lambda +\rho_n^{\Psi_\pm}}^{Spin(2m)\times SO(2)})[H^2(Spin(2m,1), \tau_\sigma^{Spin(2m)})].$$

as well as, either holographic or symmetry breaking operators. Nevertheless,  in \cite[Proposition 6.8]{OV2} we have obtained a formula for the kernel that represents the orthogonal projector onto a given isotypic component, for the precise statement see Remark 2.2.
 \section{Partial list of symbols and definitions}
 \noindent
 -$(\tau ,W),$ $(\sigma, Z)$, $L^2(G \times_\tau W), L^2(H\times_\sigma Z)$, $\,H^2(G,\tau)=V_\lambda=V_\lambda^G $, $H^2(H,\sigma)=V_\mu^H, L_\cdot^H=\pi_\mu^H, \tau=\pi_\nu^K.  $ (cf. Section 1).\\
 -$V_\tau$, $V_\sigma$, $L_\cdot^G =L_\cdot^G$, $d_\lambda=d(L_\cdot^G)$ $L_\cdot^G,$   $   K_\tau,  K_\sigma,  $  (cf. Section 2). \\
 -$P_X$ orthogonal projector onto subspace $X$.\\
 -$I_X$ identity map on the set $X$.\\
 -For a closed linear map $R$ between   Hilbert spaces, $R^\star$ is  its adjoint. \\
 -$\Phi(x)=P_W \pi (x)P_W$ spherical function attached to the lowest $K$-type $W$ of $L_\cdot^G$.  $K_\tau(y,x)=d(L_\cdot^G)\Phi(x^{-1}y)$.\\
 -$M_{K-fin} (resp M^\infty, M^{H-\infty}) $ $K-$finite vectors in $M$ (smooth vectors in $M$, $H$-smooth vectors in $M$).\\
 -$dg,dh$ Haar measures on $G$, $H$.\\
 -A unitary representation is {\it square integrable}, equivalently a {\it Discrete Series} representation,  (resp. {\it integrable}) if some nonzero  matrix coefficient is square integrable (resp. integrable) with respect to Haar measure on the group in question. \\
 -$\Theta_{\pi_\mu^H}(...)$  Harish-Chandra character of the representation $\pi_\mu^H.$\\
 -$  M_{H-disc}$ is the closure of the linear subspace spanned by the totality of $H-$irreducible submodules. $ M_{disc}:= M_{G-disc}$\\
 -A representation $M$ is $H-${\it discretely decomposable} if $ M_{H-disc} =M.$\\
 -A representation is $H-${\it admissible} if it is $H-$discretely decomposable and each isotypic component is equal to a finite sum of $H-$irreducible representations.\\
 -$\mathcal U(\mathfrak g) $ (resp. $\mathfrak z(\mathcal U(\mathfrak g))=\mathfrak z_\mathfrak g$) universal enveloping algebra of the Lie algebra $\mathfrak g$ (resp. center of universal enveloping algebra).\\
 -$\mathrm{Cl}(X)= $closure of the set $X$.\\
 -$\mathbb T =S^1=SO(2)$ one dimensional torus.\\
-$S^{(r)}(V)$ the $r^{th}$-symmetric power of the vector space $V$.\\
-Cartan decomposition $Lie(G)=\g=\k+\p$, $\theta$ Cartan involution associated to $\k$,  $\t$ maximal abelian subalgebra for $\k$. \\ $\Phi(\g,\t)=\Phi(\g)=\Phi$ root  system attached to the pair $(\g_\C,\t_\C)$. Then,  either $\g_\alpha \subset \k_\C$ ($\alpha$ is compact) or $\g_\alpha \subset \p_\C$ ($\alpha$ is noncompact).\\
- $\Phi_c =\Phi(\k,\t)$ set of compact roots.\\
-$\Phi_n =\Phi(\p,\t)=\Phi_n(\g)=\Phi^n(\g)=\Phi_n^\g=\Phi_\g^n$ set of noncompact roots.\\
-For a system of positive roots $\Psi=\Psi_\g =\Psi (\g)$ in $\Phi(\g,\t)$, \\ -$\Psi_c:=\Psi(\k,\t):=\Psi \cap \Phi_c =\Psi \cap \Phi(\k,\t)$,\\
-$\Psi_n:=\Psi_n(\g):=\Psi_n^\g:=\Psi \cap \Phi_n =\Psi \cap \Phi(\p,\t)=\Psi \cap \Phi_n(\g)=\Psi \cap \Phi^n(\g)$.\\
-$\sigma$ involution in $\g$ that commutes with $\theta$. $\h=\{ X\in \g :\sigma(X)=X \}$, $\q=\{ X\in \g :\sigma(X)=-X \}$, $\g=\h+\q$, $\l=\h\cap \k$, $\h_0=\l +\q\cap \p$, $\u=\t \cap \k$.\\
-Similar notation for the pairs $(\h,\u), (\h_0,\u)$

\smallskip
 \textbf{Acknowledgements} The  authors would like to thank T. Kobayashi for much insight  and inspiration on the problems considered here. Also, we thank Michel Duflo, Birgit Speh, Yosihiki Oshima and Jan Frahm for conversations on the subject.

 The second author   thanks  Aarhus University for generous support, its hospitality and  excellent  working conditions during the preparation of this paper.

  Last but not least, we deeply thank to American Institute of Math and especially the program  Representation Theory   Noncommutative Geometry,
an AIM Research Community, for letting us to profit of their internet facilities, Seminars, Zoom. We like to point the excellent work
 of  the organizers of RTNG:  Birgit Speh, Nigel Higson, Pierre Clare, Angela Pasquale, Monica Nevins, Haluk Seng\"{u}n.

 \providecommand{\MR}{\relax\ifhmode\unskip\space\fi MR }
 \providecommand{\MRhref}[2]{%
   \href{http://www.ams.org/mathscinet-getitem?mr=#1}{#2}
 }
 \providecommand{\href}[2]{#2}

  \end{document}